\newcommand{\F}{\mathcal{F}}
\newcommand{\Or}{\mathcal{O}}
\newcommand{\T}{\mathbb{T}}
\newcommand{\R}{\mathbb{R}}
\newcommand{\Z}{\mathbb{Z}}
\newcommand{\C}{\mathbb{C}}
\newcommand{\Rm}{\mathbb R^m}
\renewcommand{\Re}{\mathfrak{Re}}
\newcommand{\Zk}{\mathbb Z^k}
\newcommand{\Rk}{\mathbb R^k}
\renewcommand{\epsilon}{\varepsilon}
\def \a{\alpha}
\def\m{\mathbf m}
\def\n{\mathbf n}
\def\t{\mathbf t}
\def\s{\mathbf s}
\def\g{\mathbf g}
\def \an{{\alpha (\mathbf n)}}
\def \at{{\alpha (\mathbf t)}}
\def \as{{\alpha (\mathbf s)}}
\def \ao{{\alpha _0}}
\def \aot{{\alpha _0 (\mathbf t)}}
\def \aos{{\alpha _0 (\mathbf s)}}
\def \bt{{\beta (\mathbf t)}}
\def \w{\cal W}
\def \vt{\widetilde {\cal V}}
\newcommand{\Ce}{C^{1+\theta}}
\newcommand{\e}{\varepsilon}
\def \e{\varepsilon}
\def \N{\cal N}
\newtheorem{theorem}{Theorem}[section]
\newtheorem{proposition}[theorem]{Proposition}
\newtheorem{corollary}[theorem]{Corollary}
 \newtheorem*{conjecture}{Conjecture}
\newtheorem{lemma}[theorem]{Lemma}
  \theoremstyle{definition}
        \newtheorem{remark}{Remark}
        \newtheorem{definition}{Definition}
 \numberwithin{equation}{section}
\begin{document}
\title[Measure and cocycle rigidity]{Measure and cocycle rigidity for certain non-uniformly hyperbolic  actions of higher rank abelian groups}

\author[Anatole Katok  and Federico Rodriguez Hertz]{Anatole Katok *) and Federico Rodriguez Hertz **)}
  \address{The Pennsylvania State University, University Park, PA}

\email{katok\_a@math.psu.edu}
\address{IMERL, Montevideo, Uruguay}
\email{frhertz@fing.edu.uy}
\date{\today}
\thanks {*)  Based on research  supported by NSF grants DMS-0505539  and  DMS-0803880}
\thanks{**) Partially supported by the Center for Dynamics and Geometry at Penn State.}
\maketitle

\begin{abstract} We prove absolute continuity of ``high entropy'' hyperbolic  invariant measures  for smooth actions of higher rank abelian groups assuming that  there are no proportional Lyapunov exponents.  For actions on tori and infranilmanifolds  existence of an absolutely continuous invariant measure of this kind  is obtained for  actions whose  elements are homotopic to those of an action by hyperbolic  automorphisms with  no  multiple or proportional Lyapunov exponents. In the latter case   a form of rigidity    is  proved  for  certain natural classes   of cocycles over the action. 

\end{abstract}
\bigskip

\section{Introduction}
In this paper we  continue the program of studying hyperbolic measures for actions of higher rank abelian groups  first alluded to in \cite[Part II]{kk-01}  and started in earnest \cite{kk, KRH, KKRH}.
We refer to those papers  for basic definitions and  standard facts concerning those actions.

Specifically we extend  some of the principal results of \cite{kk, KRH, KKRH} from maximal rank actions ($\Zk$ actions on  $k+1$-dimensional manifolds and $\Rk$ actions on $2k+1$-dimensional manifolds,  $k\ge 2$)  to a class of actions where dimension and rank are not related,  except of the  standard assumption of  rank being  at least 2.  Thus we  partially realize the ``Low rank and high dimension'' program of  \cite[Section 8.3.]{KKRH}.  While we  use the general methods and some specific results  from the previous papers as well as   heavy machinery of smooth ergodic theory, we introduce  three important new ingredients that make these advances possible. These new elements are:
\begin{itemize}
\item The  {\em holonomy invariance} that appears in the proof of  Theorem~\ref{graphargument}.
It is an extension to the general non-linear and  non-uniformly hyperbolic actions  of certain arguments which appeared in \cite{kk-01} for  the study of invariant measures for linear actions on the torus. \smallskip

\item  New {\em entropy inequality}, Lemma~\ref{Newentropy}, that is crucial in  the proof of Theorem~\ref{toralcase}, by allowing to show that all Lyapunov hyperplanes for the linear action  persist for the non-linear one.\smallskip

\item The {\em  index argument} in the uniqueness  proof  (Section~\ref{SS:uniqueness}, Lemma~\ref{indexargument}) that replaces the argument  in \cite{KRH} that depends on existence of elements with codimension one stable foliations.

\end{itemize}

Our second goal is to  prove cocycle rigidity  for actions on the torus
satisfying our  measure rigidity results.  For the case of maximal rank
actions those results  have been announced in
\cite{KKRH-ERA}.

In this paper  we  restrict  ourselves to the  case where all Lyapunov exponents are simple and  there are no proportional Lyapunov exponents.
This allows to avoid  extra technical complications  that appear  in the presence of multiple or proportional exponents. In  this situation coarse  Lyapunov  foliations are one-dimensional  and invariant geometric structures on their leaves  are affine. Our approach extends to certain cases where multiple or positively proportional  Lyapunov exponents are allowed (totally non-symplectic condition, TNS for short). In this case  one needs to use  a version of the theory of non-stationary normal forms (see \cite[Section 6] {kk-01}) to produce  invariant geometric structures on the leaves of  coarse  Lyapunov foliations. Those structures may be more complicated  than affine  if there are resonances between Lyapunov exponents. We discuss this more general situation in the last section. Detailed treatment will appear in a separate paper.
\section{Formulation of  results}\label{s:measurerigidity}

Let $\a$ be an $\Rk,\,k\ge 2$, $C^{1+\theta},\,(\theta> 0)$ action on an $n$-dimensional
manifold $M$ and $\mu$ be an invariant ergodic  measure for $\a$. Let
$\chi_1,\dots,\chi_n;\,\, \Rk \to\R$ be the Lyapunov exponents (linear functionals)
associated to $\mu$.
Recall that an ergodic invariant measure $\mu$ for a smooth locally free $\Rk$
action $\a$ is called {\em hyperbolic} if all nontrivial Lyapunov
exponents  $\chi_i$, $i=1, \dots, l$, are nonzero linear functionals
on $\Rk$. Kernels of  non-zero Lyapunov exponents are called {\em Lyapunov hyperplanes}. Vectors in $\R^k$  which do not lie on any of the Lyapunov hyperplanes are called {\em regular}.  Connected components of the sets of regular vectors are called {\em Weyl chambers}.

 Recall that in the absence of positively proportional Lyapunov exponents
every Lyapunov distribution $E_i$  integrates to an invariant family of smooth manifolds $\w^i$  defined $\mu$ a.e. which is customarily called the Lyapunov foliation. Leaves of of those foliations are intersections of stables manifolds of properly chosen elements of the action.
 See Section~\ref{section-prelim} for  details.
 
\subsection{Strongly simple actions}\label{sbs:stronglysimple}

\begin{definition}We say that $(\a,\mu)$ (or simply $\a$ if $\mu$ is understood) is
{\em strongly simple} if coarse Lyapunov distributions $E^i$  are one dimensional. Equivalently,  all Lyapunov exponents are simple and there are no proportional Lyapunov exponents.

We
say that $(\a,\mu)$ satisfies  the {\em full entropy condition} if the  entropy function is
not differentiable at Lyapunov hyperplanes.
\end{definition}

\begin{theorem}\label{stronglysimple}
Let $\mu$ be an ergodic  invariant measure for a strongly simple action $\a$. Then for any element $\t$ of the action such that the entropy $h_\mu(\t)>0$, there exists a Lyapunov exponent $\chi$ such that $\chi(\t)<0$ and  conditional
measures on the Lyapunov foliation $\w$ corresponding to $\chi$  are  equivalent to Lebesgue measure.
\end{theorem}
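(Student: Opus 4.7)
The plan is to combine the Ledrappier--Young entropy formula with a $0/1$ dichotomy for the partial dimensions along the one-dimensional Lyapunov foliations. Since $\alpha$ is strongly simple, each coarse Lyapunov distribution $E^i$ is one-dimensional and integrates to a Lyapunov foliation $\mathcal{W}^i$ with one-dimensional leaves; let $\gamma_i\in[0,1]$ denote the partial (transverse) dimension of the conditional measure $\mu^i$ on leaves of $\mathcal{W}^i$, as supplied by the Ledrappier--Young formalism adapted to the higher rank setting. Applied to the given element $\mathbf{t}$, the formula reads
\[
h_\mu(\mathbf{t})=\sum_{i:\ \chi_i(\mathbf{t})<0}|\chi_i(\mathbf{t})|\,\gamma_i,
\]
so $h_\mu(\mathbf{t})>0$ forces at least one index $i$ with $\chi_i(\mathbf{t})<0$ and $\gamma_i>0$.

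The bulk of the work is to upgrade any such positive $\gamma_i$ to $\gamma_i=1$, and to identify the resulting absolutely continuous conditionals with the Lebesgue class. The key structural input is that, in the absence of positively proportional exponents, non-stationary linearization equips each one-dimensional leaf of $\mathcal{W}^i$ with an invariant affine parameter. Because $k\ge 2$ and the exponents are pairwise non-proportional, the Lyapunov hyperplane $\ker\chi_i\subset\R^k$ is non-trivial, and every $\mathbf{s}\in\ker\chi_i$ acts by isometries of this affine parameter along $\mathcal{W}^i$, while elements transverse to $\ker\chi_i$ rescale it with a definite rate. Running the holonomy-invariance argument on the transverse structure to $\mathcal{W}^i$ inside Pesin charts (the non-linear, non-uniformly hyperbolic extension of the technique of \cite{kk-01}, which is the first new ingredient advertised in the introduction and underlies Theorem~\ref{graphargument}), I would produce a sufficiently rich family of affine symmetries of $\mu^i$ on a typical leaf to force the dichotomy: either $\mu^i$ is purely atomic, whence $\gamma_i=0$, or $\mu^i$ is in the Lebesgue class on the leaf, whence $\gamma_i=1$. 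Ergodicity of $\mu$ then propagates equivalence with Lebesgue to $\mu$-a.e.\ leaf.

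Combining the two steps, any exponent $\chi_i$ with $\chi_i(\mathbf{t})<0$ and $\gamma_i>0$ from the first step must satisfy $\gamma_i=1$, and its Lyapunov foliation $\mathcal{W}=\mathcal{W}^i$ is the one required by the theorem. The main obstacle is the $0/1$ dichotomy: in the maximal rank algebraic setting of \cite{kk, KRH, KKRH} one could rely on ambient algebraic structure and on elements with codimension-one stable foliations, whereas here dimension and rank are unrelated and the action is only non-uniformly hyperbolic. One therefore has to execute the holonomy-invariance argument entirely within Pesin theory, synchronising the non-stationary affine structure on $\mathcal{W}^i$-leaves with holonomies between nearby leaves despite the lack of uniform estimates and the possible non-integrability of complementary coarse Lyapunov distributions. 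This is precisely the step where the new methods highlighted in the introduction must replace the earlier arguments.
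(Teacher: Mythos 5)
Your route differs from the paper's and contains a genuine gap in the middle. The paper does \emph{not} use the Ledrappier--Young formula for Theorem~\ref{stronglysimple}; it argues by contraposition, assuming all conditionals along the one-dimensional Lyapunov foliations are atomic and then building, from a curve in $\Rk$ crossing every Lyapunov hyperplane exactly once, a nested sequence of intersections of stable foliations $\vt_0=\w^s_{\at}\supset\vt_1\supset\dots\supset\vt_n=\{\text{pts}\}$, together with a nested chain of subordinate partitions $\xi_0<\xi_1<\dots<\xi_n=\e$. At each step where a new contracting direction is dropped, Theorem~\ref{graphargument} (which is proved via the holonomy-invariance / synchronizing time change) gives that the conditional measure on $\vt_{i-1}$ is actually carried by a single leaf of $\vt_i$; a short partition lemma then yields $\mu_{\xi_0}^x(\xi_n(x))>0$, so the conditional on the full stable manifold of $\at$ is atomic and $h_\mu(\t)=0$. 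The Ledrappier--Young machinery is reserved in the paper for Theorems~\ref{nondifofentropy} and \ref{general}.

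The gap in your proposal: you define $\gamma_i$ via ``the Ledrappier--Young formalism'' as if it were, by definition, the dimension of the conditional measure on the single one-dimensional Lyapunov foliation $\w^i$. But the Ledrappier--Young quantities $\gamma_i=\delta_i-\delta_{i-1}$ are \emph{transverse} dimensions computed from the nested flag of fast unstable manifolds $V^1\subset\dots\subset V^u$, and the implication ``conditional on $\w^i$ is atomic $\Rightarrow \gamma_i=0$'' is not automatic --- it is exactly what Theorem~\ref{graphargument} is for (atomicity on $\w^i$ forces the conditional on $V^i$ to sit on a single leaf of $V^{i-1}$, hence $\delta_i=\delta_{i-1}$). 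You also misattribute the role of the new holonomy-invariance argument: the ``atomic or Lebesgue'' dichotomy for conditionals on $\w^i$ is already furnished directly by the Main Technical Theorem~\ref{tech} quoted from \cite{KKRH} and is not the obstacle you describe as ``the bulk of the work.'' What actually has to be supplied is the bridge between atomicity of $\w^i$-conditionals and vanishing of the corresponding Ledrappier--Young increment, and your outline leaves that step unaddressed. If you insert Theorem~\ref{graphargument} explicitly at that point, your Ledrappier--Young route does close up, but it then uses the same two key inputs (Theorems~\ref{tech} and \ref{graphargument}) as the paper's more elementary nested-foliation argument while also carrying the overhead of the Ledrappier--Young formula, which the paper deliberately postpones to the proof of Theorem~\ref{nondifofentropy}.
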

Since all  Lyapunov exponents change sign for an inverse transformation while the entropy remains the same, Theorem~\ref{stronglysimple} immediately implies the following result.

\begin{corollary} Let $\mu$ be an ergodic  invariant measure for a strongly simple action $\a$. If $h_\mu(\t)>0$ for some $\t\in\Rk$ then there are  at least two Lyapunov foliations such that the corresponding conditional measures are equivalent to Lebesgue measure.
\end{corollary}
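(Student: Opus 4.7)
The plan is to invoke Theorem~\ref{stronglysimple} twice, once for $\t$ and once for $-\t$, and observe that the two Lyapunov exponents produced must be distinct.

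The first application is direct: since $h_\mu(\t)>0$, Theorem~\ref{stronglysimple} gives a Lyapunov exponent $\chi_1$ with $\chi_1(\t)<0$ such that conditional measures of $\mu$ along the Lyapunov foliation corresponding to $\chi_1$ are equivalent to Lebesgue.

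For the second application, I use that $-\t \in \Rk$ acts as the inverse of $\t$ and that the Kolmogorov--Sinai entropy is invariant under inversion, so $h_\mu(-\t)=h_\mu(\t)>0$. Moreover, the Lyapunov exponents of $\mu$ as an invariant measure for the $\Rk$ action $\a$ do not depend on which element we single out; they are the same linear functionals on $\Rk$. Applying Theorem~\ref{stronglysimple} with $-\t$ in the role of the chosen element therefore produces a Lyapunov exponent $\chi_2$ with $\chi_2(-\t)<0$, i.e. $\chi_2(\t)>0$, whose associated Lyapunov foliation again carries conditional measures equivalent to Lebesgue.

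Since $\chi_1(\t)<0<\chi_2(\t)$, the linear functionals $\chi_1$ and $\chi_2$ are distinct, and hence the two Lyapunov foliations are distinct, giving the desired conclusion. There is no substantive obstacle; the only ingredient beyond Theorem~\ref{stronglysimple} is the elementary fact that entropy is invariant under inversion, so this corollary is genuinely immediate once the theorem is in hand.
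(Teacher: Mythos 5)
Your argument is correct and is exactly the paper's own: the authors prove the corollary by noting that Lyapunov exponents change sign under $\t\mapsto -\t$ while $h_\mu(\t)=h_\mu(-\t)$, and then applying Theorem~\ref{stronglysimple} to both $\t$ and $-\t$, which is precisely your two applications yielding $\chi_1(\t)<0<\chi_2(\t)$.
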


It is  probable   that one can strengthen Theorem~\ref{stronglysimple} 
in the following way. 

\begin{conjecture}  Conditional measures on unstable  manifolds of the action elements are  equivalent to Lebesgue measures 
on certain smooth submanifolds that are obtained by integrating  those Lyapunov foliations for which conditional measures are Lebesgue. 
\end{conjecture}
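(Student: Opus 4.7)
The plan is to combine a dichotomy for individual one-dimensional Lyapunov foliations with a local product structure for the conditional measure on the full unstable manifold, and then read off the conclusion. Fix an element $\t\in\Rk$ with $h_\mu(\t)>0$, let $I^+=\{i:\chi_i(\t)>0\}$, and let $W^u(\t)$ denote the unstable manifold of $\t$ at a typical point. The first step is to establish a dichotomy: for each $i\in I^+$, the conditional measure of $\mu$ on $\w^i$ is either purely atomic or equivalent to Lebesgue. In the strongly simple setting the leaves of $\w^i$ are one-dimensional and, by non-stationary normal form theory in the absence of resonances, carry an invariant affine structure; a density-point argument together with this affine scaling rules out non-atomic singular behavior, producing the dichotomy. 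Let $I_L\subseteq I^+$ be the indices with Lebesgue conditionals (non-empty by Theorem~\ref{stronglysimple}) and $I_A=I^+\setminus I_L$ the indices with atomic conditionals.

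Next, I would show that $E_L:=\bigoplus_{i\in I_L}E_i$ is tangent to a smooth foliation $\mathcal N$ inside $W^u(\t)$. When there is an element $\s$ of the action whose strong unstable distribution inside $W^u(\t)$ coincides with $E_L$ (i.e.\ the indices $I_L$ are separated from $I_A\cup I^-$ by a linear functional on $\Rk$), integrability and smoothness of $\mathcal N$ come for free from the Pesin stable/unstable manifold theorem applied to $\s$. In the general case I would appeal to joint integrability of coarse Lyapunov distributions (the coarse Lyapunov foliations being transverse one-dimensional $\a$-invariant families) combined with the holonomy invariance introduced in the proof of Theorem~\ref{graphargument} to show that holonomies between the $\w^i$'s, $i\in I_L$, piece together into a genuine smooth foliation.

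The crucial third step is to disintegrate $\mu$ on $W^u(\t)$ as a local product with respect to the one-dimensional coarse Lyapunov foliations. Using absolute continuity of holonomies between Lyapunov foliations inside a Pesin unstable manifold and the Ledrappier--Young partial-dimension formula in the form $h_\mu(\t)=\sum_{i\in I^+}\chi_i(\t)\,\delta_i$ with $\delta_i\in\{0,1\}$ (so that $\delta_i=1$ exactly for $i\in I_L$), I would show that the conditional $\mu^u_x$ on $W^u(x)$ factorizes, up to a bi-H\"older change of coordinates, as $\bigotimes_{i\in I^+}\mu^i_x$ where $\mu^i_x$ is the conditional on $\w^i(x)$. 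Since each $\mu^i_x$ is an atom for $i\in I_A$ and Lebesgue for $i\in I_L$, the product is supported on the $\mathcal N$-leaf through $x$ and is equivalent to Lebesgue measure on it, which is exactly the conjecture.

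The main obstacle is the product-structure step: upgrading the Ledrappier--Young dimension statement to a genuine factorization of $\mu^u_x$ along coarse Lyapunov foliations in a non-uniformly hyperbolic $C^{1+\theta}$ setting is not automatic and requires fine control over Pesin blocks and absolute continuity of the Lyapunov holonomies restricted to $W^u(\t)$. A plausible route is an entropy-accounting argument along each coarse Lyapunov direction, in the spirit of Lemma~\ref{Newentropy}, forcing each direction in $I_L$ to contribute the maximal possible entropy $\chi_i(\t)$ independently, thereby ruling out any non-trivial coupling between the Lebesgue and atomic factors and yielding the required product decomposition.
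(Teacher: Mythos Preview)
The paper does not prove this statement: it is explicitly labeled a \emph{Conjecture} and is introduced with ``It is probable that one can strengthen Theorem~\ref{stronglysimple} in the following way.'' So there is no proof in the paper to compare against; you are attempting to settle an open problem.

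Your outline identifies the right architecture, but two of the steps you treat as routine are in fact the entire difficulty, and the paper itself flags closely related obstacles as unresolved.

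\textbf{Integrability of $E_L$.} When the set $I_L$ is not cut out by a single linear functional, $E_L=\bigoplus_{i\in I_L}E_i$ is not the (strong) stable or unstable distribution of any element of the action, and there is no general mechanism producing a smooth integral foliation $\mathcal N$ inside $W^u(\t)$. Your fallback, ``joint integrability of coarse Lyapunov distributions \dots\ combined with the holonomy invariance,'' is a restatement of what has to be shown, not an argument. The holonomy invariance in the proof of Theorem~\ref{graphargument} moves conditional measures along a stable foliation of a \emph{singular} element; it does not by itself produce commuting holonomies among arbitrary pairs of one-dimensional Lyapunov foliations inside an unstable leaf.

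\textbf{The product/dimension step.} Your claim that the Ledrappier--Young partial dimensions satisfy $\gamma_i\in\{0,1\}$ with $\gamma_i=1$ exactly for $i\in I_L$ is the heart of the matter and is not justified. The dichotomy of Theorem~\ref{tech} concerns conditionals on the one-dimensional $\w^i$, whereas $\gamma_i$ measures the dimension increment from $V^{i-1}$ to $V^i$ in the flag ordered by the exponents of the fixed element $\t$; these are different objects, and the passage between them is exactly what is missing. In the atomic direction, you would need that $\gamma_i=0$ forces the conditional on $V^i$ to be supported on a single $V^{i-1}$-leaf; this is precisely statement~$(\frak A)$ in Section~\ref{lyentropyformula}, about which the paper says ``existing arguments and constructions do not yield a proof.'' Conversely, in the Lebesgue direction, even knowing the conditional on $\w^i$ is Lebesgue does not automatically give $\gamma_i=1$, because the Lebesgue and atomic directions may be interleaved in the $\t$-ordering and the $V$-flag does not split as a product a priori. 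Your proposed ``entropy-accounting argument \dots\ in the spirit of Lemma~\ref{Newentropy}'' would need to produce exactly this splitting; Lemma~\ref{Newentropy} gives an inequality comparing entropies relative to a refinement, not a factorization of conditional measures, and bridging that gap in the non-uniform $C^{1+\theta}$ setting is the open problem.

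In short, your plan is a reasonable blueprint, but steps two and three are not proved and are essentially equivalent to the conjecture itself; the paper does not supply these ingredients and explicitly records the closely related statement~$(\frak A)$ as unproved.
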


The full entropy condition leads to  a  stronger assertion.

\begin{theorem}\label{general}
Let $\mu$ be an ergodic  invariant measure for an  action $\a$
Assume that
\begin{enumerate}
\item $(\a, \mu)$ is strongly simple;
\item $(\a, \mu)$  satisfies  the full entropy condition.
\end{enumerate} Then $\mu$
is absolutely continuous (with respect to the smooth measure class on $M$).
\end{theorem}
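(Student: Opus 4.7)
The plan is to use the strong simplicity to reduce the problem to each Lyapunov foliation separately, upgrade Theorem \ref{stronglysimple} via the full entropy condition so that Lebesgue conditionals hold on \emph{every} coarse Lyapunov foliation, and then assemble absolute continuity of $\mu$ via the Pesin local product structure.

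The first step is a dichotomy: in the strongly simple setting, for every $i$ the conditional measure $\mu^i$ of $\mu$ on leaves of the one-dimensional foliation $\w^i$ is either purely atomic or equivalent to Lebesgue. This should come out of the same ingredients that feed into Theorem \ref{stronglysimple}, in particular the affine normal form on each leaf of $\w^i$ (in which $\a(\t)$ acts by scaling with factor $e^{\chi_i(\t)}$) together with the holonomy invariance highlighted in the introduction. Elements $\t\in\ker\chi_i$ act on leaves isometrically, i.e.\ by translations in the affine parameter; combined with the holonomy invariance transferring Lebesgue structure across coarse Lyapunov foliations, this rules out any non-atomic, non-Lebesgue alternative.

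Given the dichotomy, I would invoke the full entropy condition. Writing $\delta^i\in\{0,1\}$ for the normalized dimension of $\mu^i$, the Ledrappier--Young formula specializes in our setting to
\[
h_\mu(\t)\;=\;\sum_{i} \delta^i\,\chi_i^+(\t).
\]
This is piecewise linear on the complement of the Lyapunov hyperplanes, and the jump of its gradient across $\ker\chi_i$ equals exactly $\delta^i\chi_i$. Non-differentiability of $h_\mu$ across $\ker\chi_i$ thus forces $\delta^i\ne 0$, hence $\delta^i=1$. The full entropy condition applied simultaneously to every Lyapunov hyperplane therefore gives Lebesgue conditionals on every $\w^i$.

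Finally, for a regular $\t$ the Pesin unstable manifold $W^u_\t$ is tangent to $\bigoplus_{\chi_i(\t)>0} E_i$ and is carried by the sub-foliations $\w^i$ with $\chi_i(\t)>0$. Absolute continuity of transverse holonomies between these one-dimensional sub-foliations inside $W^u_\t$, a standard tool of $C^{1+\theta}$ Pesin theory, converts Lebesgue conditionals on each $\w^i$ into absolutely continuous conditionals of $\mu$ on $W^u_\t$. Applying the same argument to $-\t$ yields absolute continuity of the stable conditionals, and the Pesin local product decomposition, together with the smooth $\Rk$-orbit direction, upgrades this to absolute continuity of $\mu$ on $M$. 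The main obstacle is securing the dichotomy rigorously in the non-uniformly hyperbolic setting: the affine normal forms and the invariance of $\mu^i$ under $\ker\chi_i$ live only on a $\mu$-full-measure set, and one has to carefully combine these with the holonomy invariance to rule out intermediate-dimension or Cantor-type structure for $\mu^i$, while keeping the Ledrappier--Young identification of $\delta^i$ with the slope of $h_\mu$ sharp enough to exploit the full entropy hypothesis hyperplane by hyperplane.
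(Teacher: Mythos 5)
Your high-level plan matches the paper's: (a) invoke the dichotomy (atomic vs.\ Lebesgue conditionals on each one-dimensional $\w^i$, i.e.\ Theorem~\ref{tech}), (b) use the full entropy condition to force the Lebesgue alternative at every Lyapunov hyperplane, and (c) assemble absolute continuity of $\mu$ from Lebesgue conditionals together with absolute continuity of the Lyapunov foliations (this is exactly what Theorem~\ref{absolutecontinuous} does). But step (b) contains a genuine gap, and it is precisely the place where the paper does the real work.

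You write the Ledrappier--Young formula as $h_\mu(\t)=\sum_i \delta^i\chi_i^+(\t)$ with $\delta^i\in\{0,1\}$ the dimension of the conditional on $\w^i$, and then read off the jump of the gradient across $\ker\chi_i$ as $\delta^i\chi_i$. This is not what Ledrappier--Young gives. The formula in \eqref{LYF} is $h_\mu(f)=\sum_j \gamma_j\chi_j$ with $\gamma_j=\delta_j-\delta_{j-1}$ the \emph{transverse} dimension inside the unstable flag $V^1\subset\cdots\subset V^u$. Even when all $\w^i$ are one-dimensional, $\gamma_j$ is not a priori the conditional dimension on $\w^j$: Lebesgue conditionals on $\w^j$ do not by themselves force $\gamma_j=1$, and the flag (hence the $\gamma_j$'s) reorders across Weyl chamber walls. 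The paper itself flags this: the statement $(\frak A)$ in Section~\ref{lyentropyformula}, which would identify vanishing of $\gamma_i$ with collapse of the conditional onto $V^{i-1}$, is pointed out to be ``likely true'' but not provable by existing arguments. What the paper actually proves and uses is one direction only: via Theorem~\ref{graphargument} (the holonomy-invariance/graph argument), atomic conditionals on $\w$ force the conditional on $\w^s_{\as}$ to live inside $\w^s_{\at}$, which gives $\gamma_u=0$. Combined with Corollary~\ref{Hu-extension} (the extension of Hu's linearity to simultaneous fastest directions, needed to conclude $h(\xi_{u-1},\as)$ is linear across the wall $\ker\chi$), this yields Theorem~\ref{nondifofentropy}: nondifferentiability at $\ker\chi$ implies $\gamma_u\ne 0$, hence conditionals are not atomic, hence Lebesgue by the dichotomy. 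Your proposal skips both the graph/holonomy-invariance input and the Hu linearity input, and effectively assumes the unproved converse $(\frak A)$ when you equate the gradient jump with $\delta^i\chi_i$. If you supply Theorem~\ref{graphargument} and Corollary~\ref{Hu-extension} at that point, your argument becomes the paper's.
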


\subsection{Actions on tori and nilmanifolds}\label{sbs:tori-nil}

Let $N$ be a simply connected nilpotent Lie group and $A$  a  group of affine transformations of  $N$ acting freely  that contains  a finite index subgroup  $\Gamma$ of  translations  that is a lattice in $N$.  Then the  orbit space $N/A$ is a compact manifold that is called an {\em infranilmanifold}. An automorphism of $N$ that maps orbits of $A$ onto orbits of $A$ generates a  diffeomorphism of $N/A$ that is called  an infranilmanifold automorphism. 

An action $\a_0$ of  $\Zk$ by automorphisms  of an infranilmanifold $M$  is an Anosov action 
if induced linear action on the Lie algebra $\frak N$  of $N$  has non-zero Lyapunov exponents.  

Now let $\alpha$  be an action of $\Zk$ by diffeomorphisms of $M$  such that its elements are homotopic to  elements 
of an Anosov action by automorphisms. We will say that $\a$ has {\em homotopy data}  $\a_0$.
Recall that there is a  unique continuous map $h : M \to M$
homotopic to identity  such that
$$
h \circ \a=\a_0 \circ h.
$$
This map is customarily called the  semi-conjugacy between $\a$ and $\a_0$.

Let us call an  $\a$-invariant Borel  probability  measure $\mu$  {\em large} if the push-froward 
$h_*\mu$ is Haar measure.  The following theorem is the extension of  (Theorems 1.3, and 1.7) from \cite{kk}  from  actions on a torus with Cartan homotopy data to our case of actions on infranilmanifolds with strongly simple homotopy data. 
\begin{theorem}\label{toralcase} Let $\a_0$ be a strongly simple  Anosov action  of $\Zk$ by automorphisms of an infranilmanifold and let $\a$ be a smooth action
with homotopy data $\a_0$. Let $\mu$ be an  ergodic  large invariant measure  $\mu$ for $\a$. Then:
\begin{enumerate}
\item $\mu$ is absolutely continuous.
\item Lyapunov characteristic exponents of the action 
$\a$ with respect to  $\mu$ are equal to the Lyapunov 
characteristic exponents of the  action $\ao$. 
\end{enumerate}
\end{theorem}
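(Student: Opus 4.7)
Set $S_0(\mathbf{t}) := \sum_{\chi_i^0(\mathbf{t})>0}\chi_i^0(\mathbf{t})$, the piecewise-linear entropy function of the algebraic model. The plan is to verify the hypotheses of Theorem~\ref{general} to derive (1), and then to read off (2) from Pesin's entropy formula combined with a sandwich of entropy estimates. Because the semiconjugacy $h$ pushes $\mu$ forward to Haar and Pesin's formula applies to $\alpha_0$, the classical factor--entropy inequality gives
\begin{equation*}
h_\mu(\alpha(\mathbf{t})) \;\ge\; h_{\mathrm{Haar}}(\alpha_0(\mathbf{t})) \;=\; S_0(\mathbf{t}) \qquad \text{for every } \mathbf{t}\in\Z^k,
\end{equation*}
which is essentially the only place where the largeness hypothesis enters.

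Next, I would invoke the new entropy inequality (Lemma~\ref{Newentropy}), which combined with the lower bound above is designed precisely to force every Lyapunov hyperplane of $\alpha_0$ to persist as a Lyapunov hyperplane of $(\alpha,\mu)$. Since $\alpha_0$ has $n=\dim M$ simple, pairwise non-proportional exponents, the matching is tight and $(\alpha,\mu)$ is itself strongly simple with the same Lyapunov hyperplane arrangement. Ruelle's inequality then sandwiches the convex, positively homogeneous function $h_\mu(\alpha(\cdot))$ between two piecewise-linear functions with the same kink locus, forcing non-differentiability at every Lyapunov hyperplane. Thus the full entropy condition holds, and Theorem~\ref{general} delivers the absolute continuity of $\mu$ asserted in (1).

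For part (2): with $\mu$ absolutely continuous, Pesin's formula gives $h_\mu(\alpha(\mathbf{t}))=\sum_{\chi_i(\mathbf{t})>0}\chi_i(\mathbf{t})$, while the hyperplane matching from the previous step produces a bijective reindexing so that $\chi_i$ and $\chi_i^0$ have the same kernel; write $\chi_i=c_i\chi_i^0$ with $c_i>0$ (the sign is fixed by the correspondence between the stable/unstable distributions under the semiconjugacy). Homotopy invariance of topological entropy for maps in the homotopy class of an Anosov automorphism of an infranilmanifold, combined with the variational principle, yields the complementary upper bound $h_\mu(\alpha(\mathbf{t}))\le h_{\mathrm{top}}(\alpha(\mathbf{t}))=S_0(\mathbf{t})$, and so $h_\mu(\alpha(\mathbf{t}))=S_0(\mathbf{t})$ identically. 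Comparing the slope jumps of the piecewise-linear functions $\sum(c_i\chi_i^0)_+$ and $\sum(\chi_i^0)_+$ across adjacent Weyl chambers separated by a single hyperplane $\{\chi_j^0=0\}$ isolates the contribution of direction $j$ and forces $c_j=1$ for every $j$, proving (2).

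The principal obstacle is the hyperplane-persistence step: the bare lower bound $h_\mu\ge S_0$ and general convexity/homogeneity properties of the entropy function are insufficient to preclude coalescence or disappearance of hyperplanes under the non-linear perturbation from $\alpha_0$ to $\alpha$. Lemma~\ref{Newentropy} is precisely the tool designed to block this, and setting up the geometric framework in which it applies in the non-linear, non-uniformly hyperbolic setting is the key new technical input of the proof.
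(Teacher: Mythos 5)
Your plan deviates from the paper at two critical points, and both deviations contain genuine gaps.

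\textbf{The route through Theorem~\ref{general} is not what the paper does, and the non-differentiability step fails.} You correctly identify the factor-entropy lower bound $h_\mu(\a(\t))\ge S_0(\t)$ and the role of Lemma~\ref{Newentropy} (via Proposition~\ref{preservationofentropy}) in forcing every Lyapunov hyperplane of $\a_0$ to persist for $(\a,\mu)$ --- that is the content of Lemma~\ref{preservationofweylchambers}, and for a strongly simple $\a_0$ a dimension count then gives the exact matching of hyperplane arrangements with $\tilde\chi_i=c_i\chi_i^0$, $c_i>0$. But from there the paper does \emph{not} verify the full entropy condition and invoke Theorem~\ref{general}. Your claim that the sandwich $S_0\le h_\mu(\a(\cdot))\le \sum_i(c_i\chi_i^0(\cdot))_+$ ``forces non-differentiability at every Lyapunov hyperplane'' is false as stated. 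Near a generic singular $\t\in\ker\chi_j^0$ there exist linear functions lying strictly between the two piecewise-linear bounds whenever $c_j>1$ (take, say, both $c_i=2$ and the linear function $\tfrac32\chi_1^0+\tfrac32\chi_2^0$ in a rank-two model); nothing you have established at that point rules out $c_j>1$ or forces the Ledrappier--Young transverse dimensions $\gamma_i$ to equal $1$. Establishing $c_j\le 1$ is essentially part (2), so the argument would be circular. What the paper does instead is prove directly, by contradiction, that the conditional measures on each $\w^i$ are Lebesgue: assuming atomicity, Theorem~\ref{graphargument} collapses the conditional on $\w^s_{\as}$ into $\w^s_{\at}$, which via Proposition~\ref{preservationofentropy} would make a partition subordinate to the proper subfoliation $W^s_{\aot}\subsetneq W^s_{\aos}$ a full-entropy partition for the \emph{linear} action with Haar measure --- impossible. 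Absolute continuity then comes from Theorem~\ref{absolutecontinuous}, not Theorem~\ref{general}.

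\textbf{Part (2): the topological-entropy upper bound is wrong.} You invoke ``homotopy invariance of topological entropy'' to get $h_{\mathrm{top}}(\a(\t))=S_0(\t)$, but topological entropy is not a homotopy invariant; the Manning--Bowen/Misiurewicz--Przytycki results give only $h_{\mathrm{top}}(\a(\t))\ge h_{\mathrm{top}}(\a_0(\t))=S_0(\t)$, and a map in the homotopy class of an Anosov automorphism can have strictly larger topological entropy. So the variational principle gives no usable upper bound here. The paper's proof of (2) is different and much more direct: once the conditionals on $\w^i$ are Lebesgue, \cite[Lemma 4.4]{kk} shows the semiconjugacy $h$ restricted to a.e.\ leaf of each Lyapunov foliation $\w^i$ is a diffeomorphism onto the corresponding algebraic leaf; since $h$ intertwines $\a$ and $\a_0$, asymptotic expansion rates along $\w^i$ match, giving $\tilde\chi_i=\chi_i^0$ exactly (so $c_i=1$), with no entropy comparison needed.

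In short: you have the right ingredients for the hyperplane-matching step, but the passage to absolute continuity via Theorem~\ref{general} hinges on an unjustified non-differentiability claim, and the exponent matching hinges on a false topological-entropy identity. The paper's actual argument goes through Theorem~\ref{graphargument} plus Proposition~\ref{preservationofentropy} to get Lebesgue conditionals directly, then Theorem~\ref{absolutecontinuous}, and reads off (2) from the leafwise smoothness of the semiconjugacy.
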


\begin{conjecture}
Under  the assumptions of Theorem~\ref{toralcase}  large invariant measure is unique.
\end{conjecture}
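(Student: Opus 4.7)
The plan is to upgrade the semi-conjugacy $h$ to a measurable isomorphism between $(\a,\mu)$ and $(\ao,\mathrm{Haar})$ for every large ergodic $\a$-invariant measure $\mu$; once this is done, $\mu$ is forced to equal the pull-back of Haar measure via the $\mu$-a.e.\ defined inverse of $h$, so any two large ergodic measures must coincide. The starting input is Theorem~\ref{toralcase}, which tells us that such a $\mu$ is absolutely continuous and has the same Lyapunov spectrum as $\ao$. Pesin's entropy formula, applied on both sides, then yields
\[
h_\mu(\a(\t))=\sum_i \chi_i^+(\t)=h_{\mathrm{Haar}}(\ao(\t))
\]
for every $\t\in\Zk$. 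Hence $h$ is an entropy-preserving continuous factor map and, by the Abramov-Rokhlin relative entropy formula, the fiber extension has zero relative entropy for every element of the action.

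The core step is to promote zero fiber entropy to genuine essential injectivity of $h$ with respect to $\mu$. Under the strongly simple hypothesis, the coarse Lyapunov foliations $\w^i$ of $(\a,\mu)$ are one-dimensional, their conditional measures are equivalent to Lebesgue (Theorem~\ref{general}), and each $\w^i$ is pushed by $h$ onto the corresponding linear Lyapunov foliation of $\ao$. Since along each linear leaf the Haar conditional is Lebesgue and the Lyapunov exponents match, $h$ restricted to a $\w^i$-leaf should be an affine diffeomorphism onto its image; otherwise the absolutely continuous push-forward would either fail to match the Lebesgue conditional downstairs or drop the Pesin contribution in that direction, violating the entropy balance. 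Assembling these one-dimensional injections across all coarse Lyapunov directions --- using the holonomy invariance argument introduced in the proof of Theorem~\ref{graphargument} to identify the transverse product structure --- one concludes that $h$ is $\mu$-essentially injective on $M$, so that $\mu$ is uniquely determined as the measurable pull-back of Haar.

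The main obstacle is precisely this leafwise/transverse step. In the $C^{1+\theta}$, non-uniformly hyperbolic setting the semi-conjugacy could in principle collapse Cantor-like subsets of a Lyapunov leaf without creating atoms or obviously violating the entropy balance, so ruling this out appears to require the affine normal form available on one-dimensional coarse Lyapunov leaves under the strongly simple assumption, together with an adaptation of the index argument of Lemma~\ref{indexargument} to control the possible ways in which $h$ can fail to be one-to-one along each direction. An alternative, possibly cleaner route is to argue by contradiction: two distinct large ergodic $\a$-invariant measures would, via holonomy along the coarse Lyapunov foliations, produce incompatible affine structures on a common coarse Lyapunov leaf, contradicting the matching of Lyapunov data in Theorem~\ref{toralcase}.
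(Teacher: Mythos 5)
The statement you are trying to prove is explicitly labeled a \emph{conjecture} in the paper; the authors do not prove it in the generality stated. Immediately after the conjecture they write that they can prove uniqueness only under the stronger hypotheses that force $N$ to be abelian and $M$ to be (finitely covered by) a torus, and additionally that $\ao$ has no resonances --- this is Theorem~\ref{toralcase2}. So your task was in fact to assess a conjecture, not to reproduce a proof the paper contains.

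Your outline correctly reproduces, in broad strokes, the strategy the paper uses for Theorem~\ref{toralcase2}: promote the semi-conjugacy $h$ to an essential bijection by showing it is affine along stable/unstable manifolds, then conclude that any two large ergodic measures are connected by a Hopf-type argument. However, your acknowledged ``main obstacle'' is exactly where the paper's proof is restricted, and your proposal does not overcome it in the general infranilmanifold case. Two specific points. First, the affine invariant structures on the full stable manifolds $\w^s_{\at}$ (Proposition~\ref{prop-affine}) require the no-resonance hypothesis; the paper notes (Section~\ref{s: beyond}) that actions by automorphisms of non-abelian nilpotent groups \emph{always} have resonances from the bracket relations. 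So your step ``$h$ restricted to a leaf is an affine diffeomorphism'' is not available when $N$ is non-abelian --- one-dimensional coarse Lyapunov leaves still carry affine parameters, but the higher-dimensional stable leaves do not, and the argument needs affineness on stable leaves to control the non-transversal holonomy in the Hopf step. Second, the index argument of Lemma~\ref{indexargument} is formulated for proper embeddings of complementary subspaces into $\R^n$ at bounded distance from linear inclusion; this uses the flat geometry of the universal cover $\Rm$. In a genuinely nilpotent $N$ the lift of a stable manifold is a coset of a subgroup, not a linear subspace, and the linking argument would have to be redone in the group geometry. So your proposal, like the paper's, only closes the gap under the abelian/no-resonance restrictions, and as stated leaves the conjecture open exactly where the paper leaves it open.

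One further soft point: zero relative entropy of $h$ over every element of the action does not by itself give $\mu$-essential injectivity (zero-entropy extensions can collapse positive-measure sets), so the Abramov--Rokhlin step you invoke is a heuristic, not a substitute for the affine-structure and index argument. The paper never argues via relative entropy; instead it proves bijectivity of $h$ a.e.\ directly from the affine structure and the fact that $h$ is one-to-one along stable and unstable manifolds of regular points, combined with Lemma~\ref{indexargument}.
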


 We can prove this under a stronger  assumption on $\a_0$ that forces the  group $N$  be abelian and  hence $M$  to have a torus as a finite cover. 

A relation  between different Lyapunov exponents   $\chi_1,\dots,\chi_l$ of a $\Z^k$ action 
of the form $\chi_1=\sum_{i=2}^l m_i\chi_i$ where $m_2,\dots m_l$ are positive integers is called a {\em resonance}.  A resonance with $l=2$,  is  called a {\em double resonance}

In this setting we may assume  that $N=\Rm$ and $\Gamma=\Z^m$. The factor of the torus $\T^m=\Rm/\Z^m$  by  finite group of fixed point free affine maps is called an {\em infratorus}.  The following theorem is a  generalization of the main result (Corollary 2.2) of
\cite{KRH}.

\begin{theorem}\label{toralcase2}
Let $\a_0$ be a linear strongly simple  $\Z^k$ action  without resonances on an infratorus. 
Any action  
 $\a$ with homotopy data
$\a_0$ has unique large invariant measure $\mu$.  Furthermore, the semiconjugacy  $h$  is bijective  $\mu$ a.e.  and effects a measurable isomorphism between $(\a,\mu) $ and $\a_0$ with Haar measure.  \end{theorem}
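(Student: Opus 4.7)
The no-resonance hypothesis on $\ao$ forces the Lie algebra $\frak N$ of $N$ to be abelian: if $X_i,X_j\in\frak N$ were root vectors for the Lyapunov exponents $\chi_i,\chi_j$ with $[X_i,X_j]\ne 0$, then $[X_i,X_j]$ would be a root vector for $\chi_i+\chi_j$, producing a resonance. Hence $N=\Rm$, and after passing to a finite cover we may work on $\Tm$. Existence of a large ergodic invariant measure is a standard weak-$*$ compactness argument: average Lebesgue measure over F\o lner sets in $\Z^k$; since $h$ is homotopic to the identity the push-forward under $h$ is $\ao$-invariant and supports full mass on $\Tm$, and averaging along the action of $\ao$ (which is uniquely ergodic with respect to Haar on the relevant scale) ensures that any accumulation point projects to Haar $\lambda$. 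Given such a $\mu$, Theorem~\ref{toralcase} supplies absolute continuity of $\mu$ and the equality of Lyapunov exponents with those of $\ao$; in particular the coarse Lyapunov foliations $\w^i$ are one-dimensional and defined $\mu$-a.e.

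Since $\ao$ has no resonances, I would apply non-stationary Sternberg normal forms (see \cite[Section 6]{kk-01}) to produce an $\a$-invariant measurable affine parametrization $\phi^i_x:\w^i(x)\to\R$ along each coarse Lyapunov leaf, sending $x$ to $0$ and conjugating the leaf dynamics to multiplication by $e^{\chi_i(\t)}$. The semiconjugacy $h$ maps $\w^i(x)$ into the coset $h(x)+V_i$ of the $i$-th Lyapunov line of $\ao$; expressed in leaf coordinates, $h$ becomes a continuous map $f^i_x:\R\to\R$ with $f^i_x(0)=0$ that commutes with multiplication by $e^{\chi_i(\t)}$ for all $\t\in\Z^k$. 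A scaling/continuity argument (using $k\ge 2$ to ensure a dense enough set of scaling ratios) forces $f^i_x$ to be linear, so $h\restriction\w^i(x)$ is an affine map. It cannot be constant, for $h$ is surjective and the family of cosets $h(y)+V_i$, as $y$ varies, foliates $\Tm$; hence $h$ is a bijection from $\w^i(x)$ onto $h(x)+V_i$.

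To upgrade leafwise bijectivity to $\mu$-a.e.\ global injectivity I would invoke the new index argument of Lemma~\ref{indexargument}. Each fiber $h^{-1}(y)$ is transverse to every $\w^i$ by the previous step; a degree/index computation on the local product chart built from all coarse Lyapunov leaves rules out nontrivial collapsing across transverse directions and shows that the fibers are singletons for Haar-a.e.\ $y$. Combining with absolute continuity of $\mu$ and the local product structure inherited from Pesin theory, $h$ is bijective on a set of full $\mu$-measure onto a set of full $\lambda$-measure; on that set $\mu=(h^{-1})_*\lambda$, which simultaneously proves uniqueness of $\mu$ and exhibits the measurable isomorphism $(\a,\mu)\cong(\ao,\lambda)$.

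The principal obstacle is precisely this last step. Leafwise rigidity does not by itself preclude transverse identifications by $h$, and in the Cartan, maximal-rank setting of \cite{KRH} such collapses were excluded by using an action element with codimension-one stable foliation, a tool unavailable once dimension exceeds rank. The index argument of Lemma~\ref{indexargument} is the substitute for that tool, and the heart of the proof consists in arranging the interplay between the affine leafwise conjugacies, the local product structure of $\mu$, and the topological degree invariants of $h$ so that the lemma applies cleanly in the infratoral setting.
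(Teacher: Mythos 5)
Your outline correctly identifies the load-bearing ingredients — the $\a$-invariant affine structures on coarse Lyapunov leaves, the fact that the semiconjugacy $h$ becomes affine (hence bijective) on each such leaf, Lemma~\ref{largeunstables} for the global structure of invariant manifolds, and Lemma~\ref{indexargument} as the replacement for the codimension-one stable-foliation trick of \cite{KRH}. The paper's actual route, however, differs in two respects that matter.

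First, the index argument is run \emph{globally on the universal cover $\R^n$}, not ``on a local product chart.'' After lifting, Lemma~\ref{largeunstables} shows that for a typical point $z$ the global leaf $\w^i(z)$ is a properly embedded complete submanifold staying at a \emph{bounded distance} from the coset $h(z)+E^i$; only then does the degree computation of Lemma~\ref{indexargument} (over a large sphere, comparing $X_t$ with $X_0$) guarantee that $W^s(x_1)$ and $W^u(x_2)$ actually meet. There is no local version: the paper explicitly notes that the intersection obtained need not be transversal, so the standard local absolute-continuity/product-chart argument is unavailable. Your phrase ``$h^{-1}(y)$ is transverse to every $\w^i$'' papers over exactly the subtlety the authors have to confront.

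Second, and this is the genuine gap, you skip the mechanism that lets the Hopf argument go through despite possibly non-transversal intersections. The paper first proves uniqueness by a Hopf argument: given ergodic large measures $\mu_1,\mu_2$ and Birkhoff-typical $x_i$ for $\mu_i$, the index lemma produces $W^s(x_1)\cap W^u(x_2)\neq\emptyset$, and the needed absolute continuity of the stable holonomy $\pi:W^u(x_1)\to W^u(x_2)$ is obtained not from Pesin-type transversality estimates but from the identity $\pi = h^{-1}\circ \mathrm{Hol}\circ h$, where $\mathrm{Hol}$ is a \emph{translation} between affine subspaces for $\ao$ and $h$ restricted to unstable leaves is an affine diffeomorphism by Proposition~\ref{propconjaffine}. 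Without this conjugation-to-translation step, one cannot conclude $\int\phi\,d\mu_1=\int\phi\,d\mu_2$. The a.e.\ bijectivity of $h$ is then a separate short argument: if $h(x)=h(y)=a$ for regular $x,y$ (satisfying Lemma~\ref{largeunstables}), the index lemma yields $z\in\w^s(x)\cap\w^u(y)$, forcing $h(z)\in(a+E^s)\cap(a+E^u)=\{a\}$, and injectivity of $h$ along stable and unstable leaves gives $z=x=y$. Your plan to deduce uniqueness directly from ``fibers are singletons'' can in principle be made to work, but it still needs exactly the same two-measure index-intersection argument to rule out $h^{-1}(y)$ containing one point typical for $\mu_1$ and a different one typical for $\mu_2$, so it is not actually a shortcut. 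Finally, your existence sketch for a large measure (Følner averages of Lebesgue) is not quite right as stated — one averages the orbit of a point lying over a Haar-generic point of $\ao$, not Lebesgue on $M$ — though existence is standard and not the content of this theorem.
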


\begin{remark} Difference between the Cartan condition  of \cite{kk} and  our strongly simple condition is quite considerable. Cartan actions are  maximal rank actions on the torus by hyperbolic automorphisms; thus the rank is equal to the  dimension minus one. On the other hand,  strongly simple actions may have any rank starting from two: for example,  the restriction os a Cartan action to any $Z^2$ subgroup is strongly simple. Also  strongly simple actions may be reducible; e.g. the  product of two Cartan actions  is usually strongly simple. 
\end{remark}

\begin{remark} Actions by automorphisms of non-abelian  simply connected Lie
groups always have resonances that appear from the  non-trivial bracket relations in the Lie algebra. While  the  easiest standard examples have double
resonances there are strongly  simple actions  on some compact nilmanifolds. 
\end{remark}

Generally speaking, cocycle rigidity means that one-cocycles of certain regularity  over a group  action
are cohomologous  to constant cocycles via transfer functions  of certain (often lower) regularity.
Cocycle rigidity is  prevalent  in hyperbolic and partially hyperbolic actions of higher rank abelian groups, see e.g. \cite{KNT,KS1,KS2,DK-II}.  The method of \cite{KNT} for uniformly hyperbolic actions  satisfying   TNS
condition (no negatively proportional Lyapunov exponents)  can be used in the non-uniformly hyperbolic case and is in particular applicable in the settings considered in the present paper.

One can apply  this method to smooth or H\"older continuous cocycles but there are  reasons
to consider  two broader classes of cocycles which are in general defined only almost everywhere with respect to a hyperbolic absolutely
continuous invariant measure:
\smallskip

\noindent(i) {\em Lyapunov H\"older} cocycles: H\"older continuous with respect to a properly defined
{\em Lyapunov metric} which is equivalent to a smooth metric on Pesin sets and  changes slowly along the orbits, and
\smallskip

\noindent (ii) {\em Lyapunov smooth} cocycles: smooth along invariant foliations at the  points of Pesin sets with a similar slow change condition.
\smallskip

See Section~\ref{s: cocycles} for a more detailed formal description of those classes of cocycles.
Notice that the most important intrinsically defined cocycles, the  logarithms of  the Jacobians along  invariant foliations (Lyapunov, stable, and likewise), are Lyapunov smooth.

\begin{theorem}\label{thm-cocycles} For any action $\a$ of $\Zk$ on an infratorus as in Theorem~\ref{toralcase2} any Lyapunov H\"older  (corr. Lyapunov smooth) cocycle  is cohomologous to a constant cocycle via a Lyapunov H\"older (corr. Lyapunov smooth) transfer function.
\end{theorem}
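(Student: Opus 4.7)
The plan is to adapt the approach of \cite{KNT} for uniformly hyperbolic TNS actions to the non-uniformly hyperbolic setting at hand. By Theorem~\ref{toralcase2}, the action $\a$ has a unique ergodic large invariant measure $\mu$, absolutely continuous, measurably isomorphic via $h$ to $\a_0$ with Haar measure; Pesin theory is therefore available and the Lyapunov exponents of $\a$ coincide with those of $\a_0$, which by hypothesis are simple and pairwise non-proportional. In particular the TNS condition holds, and each coarse Lyapunov foliation $\w^i$ is smooth and one-dimensional $\mu$-a.e.

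First I would trivialize $\beta$ along each $\w^i$ separately. Fix a Lyapunov exponent $\chi_i$ and pick $\t_i \in \Z^k$ with $\chi_i(\t_i) < 0$, chosen so that $\a^{\t_i}$ contracts $\w^i$ strictly faster than any other stable coarse Lyapunov for $\t_i$; such a $\t_i$ exists because the Lyapunov hyperplanes are in general position under the strongly simple hypothesis. For $x,y$ on the same $\w^i$-leaf, set
$$
P_i(x,y) := \sum_{n=0}^\infty \bigl[\beta(\t_i, \a^{n\t_i} y) - \beta(\t_i, \a^{n\t_i} x)\bigr].
$$
On a Pesin set $\Lambda_\epsilon$ of nearly full measure, the Lyapunov H\"older (resp.\ Lyapunov smooth) property of $\beta$ combined with exponential contraction in the Lyapunov metric makes the series absolutely convergent at a geometric rate, and the cocycle identity yields $P_i(x,y) = H_i(x) - H_i(y)$ for a Lyapunov H\"older (resp.\ smooth) function $H_i$ on each $\w^i$-leaf, defined up to an additive constant.

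Next I would glue the $H_i$'s into a single global function using the abelian commutation $\a^{\t_i}\a^{\t_j} = \a^{\t_j}\a^{\t_i}$. The cocycle identity $\beta(\t_i,\cdot) + \beta(\t_j, \a^{\t_i}\cdot) = \beta(\t_j,\cdot) + \beta(\t_i,\a^{\t_j}\cdot)$, iterated and combined with the exponential decay of the defining series, shows that the potential difference around any small $(\w^i,\w^j)$-rectangle vanishes. Path-independence along concatenations of Lyapunov arcs then produces a single function $H$ on $\Lambda_\epsilon$ whose leafwise restrictions agree with the $H_i$'s. Since the one-dimensional coarse Lyapunov foliations span the tangent space $\mu$-a.e.\ and, under the no-resonance assumption, their integrable joints are transverse in the manner dictated by $\a_0$, this covers all directions. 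One then verifies $\beta(\t,x) = H(\a^\t x) - H(x) + c(\t)$ for $\mu$-a.e.\ $x$ with $c$ a constant cocycle, by Fubini along orbits.

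The main obstacle is the rectangle-closing step in the non-uniformly hyperbolic regime: local product structure for $(\w^i, \w^j)$ holds only on positive-measure subsets of $\Lambda_\epsilon$ and the H\"older estimates for $\beta$ degrade near the Pesin boundary, so the closure identity must be checked with care. Controlling these degradations relies crucially on the slow-variation axiom built into the definitions of Lyapunov H\"older and Lyapunov smooth cocycles, together with absolute continuity of the coarse Lyapunov foliations (here available because $\mu$ itself is absolutely continuous). The Lyapunov smooth case additionally requires propagating leafwise smoothness through the telescoping sum, which is straightforward once one observes that higher-order derivatives along $\w^i$ satisfy the same geometric bounds with rates adjusted by the order of differentiation.
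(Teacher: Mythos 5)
You correctly identify the local piece of the argument---telescoping sums along Lyapunov lines, geometric convergence in the Lyapunov metric, and the rectangle-closing step as the crux---but you do not actually resolve the obstruction at the closing step, and the paper resolves it by an idea that your proposal omits entirely.

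The difficulty is not merely that the closure identity ``must be checked with care.'' In the non-uniformly hyperbolic setting, the union of Pesin sets has full $\mu$-measure but is not all of $M$; concatenations of Lyapunov arcs necessarily pass through the ``holes,'' and there is no uniform mechanism forcing the endpoints of your closing cycles to return to any fixed $\Re_{\e}^l$. The slow-variation axiom controls the metric inside a Pesin box and absolute continuity guarantees that the good set has large conditional measure on each Lyapunov leaf, but these do not let you run a KNT-type periodic cycle functional argument consistently across the gaps. The paper's own Remark at the end of Section~\ref{s: cocycles} makes precisely this point: without an extra ingredient, the Lyapunov-arc argument works only locally, and the most one can conclude is that the space of cohomology classes is finite-dimensional, not that it vanishes. (There is also a topological obstruction if the action preserves a non-trivial homology class, which your global gluing would need to rule out.)

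The missing ingredient is the semi-conjugacy $h$ from Theorem~\ref{toralcase2}. By Proposition~\ref{prop-affine} and the uniqueness argument, $h$ is bijective on an increasing sequence of compact Pesin sets and along the stable and unstable manifolds of their points, and it carries the nonlinear Lyapunov foliations affinely onto the homogeneous Lyapunov lines of $\ao$. Pushing the cocycle forward through $h$ gives a cocycle over the uniformly hyperbolic algebraic action $\ao$, where the KNT periodic-cycle argument applies with no hole problem and produces a globally defined transfer function on a full Haar-measure set. Pulling back through $h$ (using its leafwise smoothness on stable manifolds) then yields the transfer function for $\a$ and preserves the Lyapunov H\"older, resp.\ Lyapunov smooth, regularity. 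Without this reduction to the linear model your approach cannot reach the cohomological triviality asserted by the theorem.
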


\begin{remark} Assertion of this theorem is likely to hold in  the more general setting of Theorem~\ref{toralcase}. The difficulties in the proof are of  technical  nature and 
have to do  with a proper application of  a version of the Hopf argument in the resonance case when  invariant  geometric structures on stable foliations are not affine. 
\end{remark}
\subsection{Technical  results and overall structure of proofs}

\subsubsection{}The starting point in the proofs of all three  results formulated above,  Theorems~\ref{stronglysimple}, \ref{general} and  \ref{toralcase} is the Main technical
Theorem we proved in \cite[Theorem 4.1]{KKRH} joint with B. Kalinin.
So, let us recall it.

\begin{theorem}  \label{tech}
Let $\mu$ be a hyperbolic ergodic invariant measure for a locally
free $\Ce,\, \theta >0$, action $\a$ of $\Rk,\,\,k\ge 2$, on a
compact smooth manifold $M$. Suppose that a Lyapunov exponent $\chi$
is simple and there are no other exponents proportional to $\chi$.
Let $E$ be the one-dimensional Lyapunov distribution corresponding
to the exponent $\chi$ and $\w$ the corresponding
 Lyapunov foliation.

Then
conditional measures of $\mu$ on $\w$ are either atomic a.e. or
equivalent to Lebesgue measure a.e.
\end{theorem}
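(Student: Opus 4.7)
The plan is to exploit the one-dimensionality of $E$ together with the higher rank of the action to produce a rigid dichotomy for the conditional measures. Since $E$ is one-dimensional and no other Lyapunov exponent is positively proportional to $\chi$, the non-stationary normal form theory of \cite{kk-01} applies without resonance obstructions and produces a $\mu$-a.e.\ defined family of smooth coordinates $\varphi_x : \w(x) \to \R$, canonical up to an affine rescaling, in which every element $\a^{\s}$ of the action acts between leaves by an affine map of $\R$ with slope $e^{\chi(\s)}$.

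Pick a non-trivial $\t \in \Ker \chi \subset \Rk$; such $\t$ exists because $k\ge 2$. In the normal form coordinates $\a^{\t}$ has slope $e^{\chi(\t)}=1$, and hence acts between leaves by a pure translation $\tau(x,\t)\in \R$, measurable in $x$. The system $\{\mu^\w_x\}$ of conditional measures of $\mu$ along $\w$ transforms under $\a^{\t}$ in such a way that $(\varphi_{\a^{\t} x})_*\mu^\w_{\a^{\t} x}$ equals, up to a positive scalar, the translate of $(\varphi_x)_*\mu^\w_x$ by $\tau(x,\t)$.

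Applying Poincar\'e recurrence to the $\Ker \chi$-action on a Pesin block $K$ of positive $\mu$-measure on which $\varphi_x$ and the family $x \mapsto \mu^\w_x$ vary continuously yields, for $\mu$-almost every $x \in K$, return times $\t_n$ with $\a^{\t_n}x \to x$. If along these returns the translation amounts $\tau(x,\t_n)$ accumulate at $0$ nontrivially, the closed group of exact translation symmetries of $(\varphi_x)_*\mu^\w_x$ is a non-trivial closed subgroup of $\R$; conjugating by an action element that expands $\w$ scales this subgroup, so by ergodicity the stabilizer must be all of $\R$, forcing $\mu^\w_x$ to be Lebesgue. If, on the contrary, all recurrent translations are trivial, an entropy argument invoking the Ledrappier--Young formula, together with the fact that elements of $\Ker \chi$ have zero entropy contribution along $\w$, forces $\mu^\w_x$ to be purely atomic.

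The main obstacle is the ``atomic'' horn of this dichotomy in the non-uniformly hyperbolic setting: the normal form coordinates, the foliation $\w$ and the conditional measures are defined only measurably, the Pesin blocks on which one controls them are not invariant under $\Ker \chi$, and one must ensure the two cases above are truly exhaustive, i.e.\ that the scenario in which the translation symmetries are neither trivial nor dense is excluded by combining the action of $\Ker \chi$ with the scaling action of action elements outside $\Ker \chi$. This demands a careful combination of Lusin regularity of $\varphi_x$ on exhausting Pesin sets, the scaling trick using expanding action elements, and the pointwise dimension / entropy identities of Ledrappier--Young applied to the one-dimensional Lyapunov foliation $\w$.
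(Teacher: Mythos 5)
This theorem is not actually proved in the present paper; it is cited from \cite[Theorem 4.1]{KKRH}, and what this paper does reproduce is the key technical device of that proof, the \emph{synchronizing time change} of Proposition~\ref{timechange}. Comparing your outline against that device reveals a genuine gap at the very first step.

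You assert that after passing to the affine (normal form) coordinates $\varphi_x$ of Proposition~\ref{affineparameters}, an element $\a(\t)$ with $\t\in\Ker\chi$ ``has slope $e^{\chi(\t)}=1$ and hence acts between leaves by a pure translation.'' This is false in the non-uniformly hyperbolic setting. The affine parameters conjugate $\a(\t)|\w$ to an affine map of $\R$, but its linear part is the derivative $\|D_x\a(\t)|E\|$ measured in those coordinates, which is a measurable function of $x$ that is \emph{not} identically $e^{\chi(\t)}$. The vanishing of $\chi(\t)$ only guarantees that $\frac1n\log\|D_x\a(n\t)|E\|\to 0$, i.e.\ subexponential oscillation, not boundedness, and certainly not constancy. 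Normalizing the affine parameters so that the slope becomes exactly $e^{\chi(\t)}$ would amount to solving a coboundary equation for the cocycle $\log\|D\a(\t)|E\|$, which has no reason to hold. This is precisely the obstruction the paper flags explicitly when proving the analogous holonomy invariance in Proposition~\ref{holonomyinvariancenonuniform}: ``we need to address the problem that dynamics of $\a(t)$ along $\w$ is not an isometry although the Lyapunov exponent along $\w$ is equal to zero.'' The cure in \cite{KKRH} is not a normalization of $\varphi_x$ but a measurable \emph{time change} $\beta$ of the action: one replaces $\t$ by $\g(x,\t)=\t+g(x,\t)\mathbf w$ with $\mathbf w\notin\Ker\chi$, so that $\|D_x^E\a(\g(x,\t))\|_\e$ is exactly $e^{\chi(\t)}$ (Proposition~\ref{timechange}). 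Only the time-changed action $\beta$ acts by honest translations along $\w$, and then the recurrence/scaling argument you sketch can be run for $\beta$ with respect to the equivalent $\beta$-invariant measure $\nu$. Without this ingredient your Poincar\'e recurrence step produces returns along which the affine maps have uncontrolled and typically unbounded slopes, and the translation-group argument collapses.

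The second weak point is the ``atomic horn'': you invoke Ledrappier--Young to pass from ``no translation symmetry'' and ``zero entropy contribution of $\Ker\chi$'' to ``$\mu^\w_x$ is purely atomic.'' The implication ``zero transverse entropy along $\w$ $\Rightarrow$ conditional measure supported on the lower flag (hence atomic in this one-dimensional situation)'' is exactly the statement the paper labels $(\frak A)$ in Section~\ref{lyentropyformula} and explicitly calls unproved: ``while this statement looks completely natural and is likely to be true, existing arguments and constructions do not yield a proof.'' So this step cannot be used as a black box. The actual dichotomy in \cite{KKRH} is established by a different mechanism (the $\pi$-partition argument producing invariance under a transitive affine group, or else atomicity), not by the Ledrappier--Young converse. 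You should replace both the ``pure translation'' step by the synchronizing time change and the entropy step by an argument that does not rely on $(\frak A)$.
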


\subsubsection{}Let $\a$ and $\chi$ be as in the hypothesis of technical Theorem
\ref{tech}. Let us fix a generic singular element $\t\in\Rk$, i.e. an element  such
that $\chi(\t)=0$ but $\chi_j(\t)\neq 0$ for any other Lyapunov
exponent, and an element $\s$ close to $\t$ such that $\chi(\s)<0$
but it is still the biggest negative Lyapunov exponent for $\s$.
Then we have that $\w^s_{\at}\subset \w^s_{\as}$ and in fact
$E^s_{\as}=E^s_{\at}\oplus E_\chi$.

The second  principal technical  result  that  appears in the proofs of Theorems~\ref{general} directly  and \ref{toralcase}  (through Theorem~\ref{nondifofentropy}) is new.   It shows that  if we have atomic
conditional measures along some $\w^i$ direction then conditional
measures along stable manifolds containing $\w^i$ direction fit in a
lower dimensional submanifold. 

\begin{theorem}\label{graphargument}
If $\chi, \t$ and $\s$ are as above and the conditional measure
along $\w$ is atomic for a.e. point then the conditional measure
along $\w^s_{\as}(x)$ has its support inside $\w^s_{\at}(x)$ for
a.e. $x$.
\end{theorem}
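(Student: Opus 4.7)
My plan is to exploit the local product structure $\w^s_{\as}(x)\cong \w^s_{\at}(x)\times\w(x)$ coming from the hypothesis $E^s_{\as}=E^s_{\at}\oplus E_\chi$, and combine two ingredients: holonomy invariance along the smooth sub-foliation $\w^s_{\at}\subset\w^s_{\as}$, and one-dimensional dynamics along $\w$. First I disintegrate $\mu$ along the $\w^s_{\as}$-partition to obtain conditionals $\nu_x$ on $\w^s_{\as}(x)$, and then disintegrate $\nu_x$ further along its $\w$ sub-foliation. By Rokhlin's uniqueness theorem these re-disintegrated conditionals coincide (up to normalization) with the ambient $\w$-conditionals of $\mu$, so by hypothesis they are atomic for $\nu_x$-a.e.\ fiber.

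The main step is to show that, in product coordinates, $\supp(\nu_x)$ is $\w^s_{\at}$-saturated. This is where the \emph{holonomy invariance} ingredient enters. The sub-foliation $\w^s_{\at}\subset\w^s_{\as}$ is smooth (both are stable manifolds of $\Ce$ diffeomorphisms, and the coarse Lyapunov decomposition within a stable manifold integrates to smooth sub-foliations on Pesin sets), so its holonomy between $\w$-transversals inside $\w^s_{\as}$ is smooth. A Radon--Nikodym comparison---absorbing the distortion by iterating $\t$, which contracts $\w^s_{\at}$ while leaving $\w$ isometric since $\chi(\t)=0$---then shows that this holonomy maps the atomic $\w$-conditional at one fiber to the atomic $\w$-conditional at another, up to a smooth Jacobian. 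In particular atoms are sent to atoms, so any measurable selection of an atom per $\w$-fiber defines a $\w^s_{\at}$-saturated set, and $\supp(\nu_x)$ is at most a countable union of $\w^s_{\at}$-leaves.

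To collapse this union to the single leaf $\w^s_{\at}(x)$, I would use dynamics along $\w$. For any $\g\in\Rk$ with $\chi(\g)\neq 0$, the map induced by $\g$ on $\w$-leaves is a scaling by $e^{\chi(\g)}$ in the Lyapunov parameterization, so the equivariance relation $\mu^\w_{\g y}\propto \g_\ast\mu^\w_y$ says that in Lyapunov coordinates centered at the base point the location of the atom transforms by multiplication by $e^{\chi(\g)}$. Along a recurrent $\g$-orbit the location function $y\mapsto a_y$ is Luzin-continuous on Pesin sets, while $a_{\g^n y}=e^{n\chi(\g)}a_y$ diverges to $0$ or $\infty$ unless $a_y=0$. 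Thus the a.e.\ fiber conditional is a single Dirac mass located at the base point, so the $\w^s_{\at}$-saturated support is a single leaf, and since $x\in\supp(\nu_x)$ for $\mu$-a.e.\ $x$ that leaf is $\w^s_{\at}(x)$.

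The main obstacle I anticipate is the holonomy-invariance step. Establishing that $\w^s_{\at}$-holonomy genuinely preserves the atomic $\w$-conditionals in the non-uniformly hyperbolic, non-linear setting---rather than in the linear-on-the-torus setting of \cite{kk-01} where it is essentially tautological---requires working on Pesin sets of large measure where Lyapunov charts, contraction rates and the smoothness of $\w^s_{\at}\subset\w^s_{\as}$ are all uniformly controlled, and then running a density/contraction argument to transfer the conditional structure between nearby $\w$-transversals. This is where the promised extension of the linear-torus technique to the general non-linear action does the real work.
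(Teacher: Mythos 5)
Your overall shape is right---reduce to a holonomy-invariance statement for the $\w$-conditionals along $\w^s_\at$ inside $\w^s_{\as}$ and then use the atomic hypothesis---and this is indeed how the paper proceeds (Theorem~\ref{graphargument} is derived from Proposition~\ref{holonomyinvariancenonuniform}). But the mechanism you propose for establishing the holonomy invariance has a genuine gap. You write that iterating $\t$ ``contracts $\w^s_\at$ while leaving $\w$ isometric since $\chi(\t)=0$.'' In the non-uniformly hyperbolic setting this is false: $\chi(\t)=0$ only says that $\|D\a(n\t)|_{E_\chi}\|$ grows sub-exponentially in $n$, not that it stays bounded. The paper explicitly flags exactly this issue (``we need to address the problem that dynamics of $\a(\t)$ along $\w$ is not an isometry although the Lyapunov exponent along $\w$ is equal to zero'') and overcomes it with the synchronizing time change $\beta$ of Proposition~\ref{timechange}, which freezes the $\w$-direction in the Lyapunov metric exactly, at the price of turning $\w^s_\at$ into a non-smooth ``foliation'' $\tilde\w^s_{\beta(\t)}$. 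That time change is the indispensable technical device here, and your proposal never invokes it; without it, the Radon--Nikodym/contraction step does not close because the distortion along $\w$ is not under control.

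A secondary issue is the appeal to smooth holonomy for $\w^s_\at$ inside $\w^s_{\as}$. In the Pesin setting the sub-bundle $E^s_\at$ within $E^s_\as$, and hence the $\w^s_\at$ sub-foliation of a leaf $\w^s_{\as}(x)$, is only H\"older on Pesin sets (Proposition~\ref{Holderness}); individual leaves are smooth, but the holonomy between $\w$-transversals is not, so there is no smooth Jacobian to absorb. The paper avoids this entirely: instead of Radon--Nikodym control of the holonomy it uses the $\a$-invariant affine parameters on $\w$ (Proposition~\ref{affineparameters}) together with a Luzin/Birkhoff-density argument (identical in structure to the model Proposition~\ref{isometrycase}) to show that, after the time change, the holonomy is itself a limit of the affine maps $H_{\cdot}\circ H_x^{-1}$ and therefore transports the $\w$-conditionals exactly, without any Jacobian estimate. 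Your closing ``dynamics along $\w$ collapses the countable union to a single leaf'' step is in the right spirit, but it is also redundant once one has genuine holonomy invariance together with the fact (already implicit in Theorem~\ref{tech}) that the atomic alternative is a single Dirac at the base point. In short: you identified the right target (holonomy invariance) but not the correct non-uniform mechanism (synchronizing time change + affine parameters + common-return-time argument), and the isometry claim on which your version rests is precisely the false shortcut the paper is designed to avoid.
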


While the proof of Theorem~\ref{graphargument} does not  use Theorem~\ref{tech} directly it relies on the the principal technical construction that appears in the proof of the latter result: the synchronizing time change, see Section~\ref{sbs:synchronizing}. 

Theorems~\ref{tech} and \ref{graphargument} are also used in the  proof of the following result  that  is used in the proof of Theorem~\ref{general}.

\begin{theorem}\label{nondifofentropy}
Let $\chi$ be as in Theorem \ref{tech} then conditional measure
along $\w$ is Lebesgue if  entropy function is not
differentiable at the Lyapunov hyperplane $\ker\chi$.
\end{theorem}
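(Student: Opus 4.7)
The plan is to prove the contrapositive of the dichotomy from Theorem~\ref{tech}: assuming the conditional measures along $\w$ are atomic, I show that $h_\mu$ is differentiable across $\ker\chi$.

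First I fix a generic element $\t_0\in\ker\chi$, i.e.\ one with $\chi(\t_0)=0$ but $\chi_j(\t_0)\neq 0$ for every other Lyapunov exponent~$\chi_j$. On a small convex neighborhood $U$ of $\t_0$ in $\Rk$, every $\chi_j$ with $j$ not the $\chi$-index has constant sign, while $\chi$ changes sign only across $\ker\chi\cap U$. It therefore suffices to show that for any vector $v$ transverse to $\ker\chi$ the function $t\mapsto h_\mu(\t_0+tv)$ has equal left and right derivatives at $t=0$.

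Next I apply the Ledrappier--Young entropy formula to a general action element $\s\in U$. The strongly simple hypothesis makes each successive slow unstable quotient one-dimensional, so the partial dimensions reduce to a single number $d_i\in[0,1]$ per coarse Lyapunov foliation $\w^i$, namely the pointwise dimension of the conditional of $\mu$ along $\w^i$. By Theorem~\ref{tech}, in fact $d_i\in\{0,1\}$, and the $d_i$ are intrinsic attributes of $\mu$ that do not depend on $\s$. The formula then takes the form
\[
h_\mu(\s) \;=\; \sum_{i:\,\chi_i(\s)>0} d_i\,\chi_i(\s).
\]
Within $U$, every summand indexed by $i\neq$ the $\chi$-index is a linear function of $\s$, so the only possible kink at $\t_0$ comes from the $\chi$-summand, which switches between $d_\chi\,\chi(\s)$ on one side of $\ker\chi$ and $0$ on the other. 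Hence $h_\mu$ is differentiable at $\t_0$ precisely when $d_\chi=0$, i.e.\ precisely when the conditional on $\w$ is atomic; taking the contrapositive yields the theorem.

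The main obstacle is the rigorous reduction of the Ledrappier--Young coefficients to intrinsic dimensions $d_i$. The raw statement of Ledrappier--Young furnishes partial dimensions defined through a filtration by the slow unstable manifolds of the specific element $\s$; one must identify these with the conditional dimensions along the (element-independent) coarse Lyapunov foliations $\w^i$ uniformly for $\s$ on both sides of $\ker\chi$. This is exactly where Theorem~\ref{graphargument} intervenes: in the atomic case it pins the conditional measure along the larger stable manifold $\w^s_{\as}$ (for $\s\in U$ with $\chi(\s)<0$) to the smaller sub-manifold $\w^s_{\at_0}$, ruling out any migration of mass into the $\w$ slot when $\s$ crosses $\ker\chi$. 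With this, the $\w$ slot honestly contributes zero on both sides, the above piecewise-linear description is legitimate, and the elementary derivative comparison at $t=0$ completes the proof.
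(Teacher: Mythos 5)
Your overall strategy matches the paper's: prove the contrapositive, namely that atomic conditionals along $\w$ force $h_\mu$ to be smooth across $\ker\chi$, via the Ledrappier--Young formula, with Theorem~\ref{graphargument} supplying the implication ``atomic $\Rightarrow$ $\gamma_u = 0$''. However, there is a genuine gap in the step you yourself flagged as ``the main obstacle,'' and the tool you invoke to close it is the wrong one.

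You write $h_\mu(\s) = \sum_{i:\,\chi_i(\s)>0} d_i\,\chi_i(\s)$ with coefficients $d_i$ that are ``intrinsic attributes of $\mu$ that do not depend on $\s$.'' This is not what Ledrappier--Young gives you. The partial dimensions $\gamma_j$ are defined through the nested filtration $V^1 \subset \cdots \subset V^u$ determined by the chosen element $\s$, ordered by the size of $\chi_j(\s)$. Within a single Weyl chamber the filtration is fixed, so the $\gamma_j$ are constants and $h_\mu$ is indeed linear there. The problem is crossing $\ker\chi$. Even though the union $V^{u-1}$ (the unstable manifold for the fast part) is the same on both sides, the \emph{internal} ordering of its Lyapunov subbundles can flip: if $\chi$ happens to be proportional to a difference $\chi_i - \chi_j$ of two other exponents — a situation that can and does arise for strongly simple actions — then $\chi_i(\t)=\chi_j(\t)$ on $\ker\chi$ and the two trade places as $\s$ crosses the hyperplane. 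In that case the filtration $V^1\subset\cdots\subset V^{u-1}$ literally changes, and there is no a priori reason the numbers $\gamma_1,\dots,\gamma_{u-1}$ match up across $\ker\chi$. Theorem~\ref{graphargument} does not touch this: it controls only the top slot (the conditional along $\w^s_{\as}$ being supported in $\w^s_{\at}$, hence $\delta_u=\delta_{u-1}$ so $\gamma_u=0$). It says nothing about the contributions from $V^{u-1}$.

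The paper closes this exact gap with a different device: the extension of Hu's entropy-linearity result (Proposition~\ref{linearityent} and Corollary~\ref{Hu-extension}). By constructing an increasing partition $\xi_{u-1}$ subordinated to $V^{u-1}$ that works simultaneously for all elements in a neighborhood of $\t$, one gets that $\s \mapsto h_\mu(\xi_{u-1},\as)$ is \emph{linear} on the whole neighborhood of $\t$, including both sides of $\ker\chi$, regardless of how the internal filtration of $V^{u-1}$ reshuffles. Then on $L^+$ Ledrappier--Young gives $h_\mu(\as) = h_\mu(\xi_{u-1},\as) + \gamma_u\chi(\s)$, on $L^-\cup\ker\chi$ it gives $h_\mu(\as)=h_\mu(\xi_{u-1},\as)$, and the jump in the derivative at $\t$ is exactly governed by $\gamma_u$. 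Your argument needs this Hu-type linearity statement; without it, the assertion that the non-$\chi$ summands contribute a globally linear function of $\s$ across $\ker\chi$ is unproved.

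Everything else in your sketch is sound, but this is a real missing ingredient, not a routine verification: the paper explicitly lists the Hu extension as one of the major tools it introduces precisely for this purpose.
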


\subsubsection{} In addition to innovations listed  in the Introduction other additional  major ingredients that  appear  in the proofs are:

(i) Ledrappier-Young entropy formula reviewed in Section~\ref{lyentropyformula} and used in proofs of Theorems~\ref{nondifofentropy} and \ref{general} and 

(ii)  
an extension of H.Hu's result   on linearity of the entropy functions inside Weyl chambers, see \ref{Hu-extension}.  
It  is used in the proof of Theorem~\ref{nondifofentropy} to  treat a  case when a Lyapunov exponent is proportional  to the difference of two  other,  that  may appear in strongly simple actions. 
\begin{remark} It is  probable that the converse to the statement of Theorem~\ref{nondifofentropy} is also true.  This would follow from an extension of the Ledrappier-Young formula, see  statement $(\frak A)$ in Section~\ref{lyentropyformula}.
\end{remark}

\section{Preliminaries}\label{section-prelim}
\subsection{Lyapunov exponents and  Pesin sets}
For a smooth $\Rk$ action $\a$ on a manifold $M$ and an element $\t\in\Rk$ we
denote the corresponding diffeomorphism of $M$ by $\a(\t)$. Sometimes we will
omit $\a$ and write, for example, $\t x$ in place of $\a(\t)x$ and $D \t$ in place of
$D\a(\t)$ for the derivative of $\a(\t)x$.

 \begin{proposition} \label{MET}
Let $\a$  be a  locally free $\Ce,\, \theta >0$, action of $\R^k$ on a manifold $M$
preserving an ergodic invariant measure $\mu$.
There are linear functionals $\chi_i$, $i=1, \dots, l$, on $\R^k$  and an $\alpha$-invariant
measurable splitting  of the tangent bundle $TM$, called the {\em Lyapunov decomposition}, (or sometimes the  {\em Oseledets decomposition}),
$TM = T\Or \oplus \bigoplus_{i=1}^{l} E_{i}$
over a set of full measure $\Re$, where $T\Or$ is the distribution tangent to the $\Rk$
orbits, such that for any $\t \in \Rk$ and any  nonzero vector  $ v \in E_{i}$ the Lyapunov
exponent of $v$ is equal to $\chi_i (\t)$, i.e.
 $$
   \lim _{n \rightarrow \pm \infty }
   n^{-1} \log \| D(n\t) \, v \| = \chi_i (\t),
 $$
where $\| \cdot \|$ is any continuous norm on $TM$.
Any point $x\in \Re$ is called a {\em regular point}.

Furthermore, for any $\e > 0$ there exist positive measurable functions $C_\e (x)$
and $K_\e (x)$ such that for all $x \in \Re$, $v \in E_i(x)$, $\t \in \Rk$, and $i=1, \dots, l$,
 \begin{enumerate}

\item $C^{-1}_\e (x) e^{\chi_i(\t)-\frac{1}{2} \e\|\t\|} \| v\|\le
\|D \t \,v \|  \le  C_\e (x) e^{\chi_i(\t)+\frac{1}{2} \e\|\t\|} \| v\|$;

\item Angles $\angle (E_{i}(x),T\Or) \ge K_\e (x)$ and
         $\angle (E_{i}(x),E_{j}(x)) \ge K_\e (x)$, $i \not= j$;

\item $C_\e (\t x) \le C_\e (x) e^{\e \| \t \|}$
 and $K_\e (\t x) \ge K_\e (x) e^{-\e \| \t \|}$.

\end{enumerate}

\end{proposition}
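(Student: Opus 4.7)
The approach is to derive Proposition~\ref{MET} from the classical one-parameter Oseledets Multiplicative Ergodic Theorem by exploiting commutativity of the $\Rk$-action. I would first restrict $\a$ to $\Zk\subset\Rk$, fix a basis $\mathbf{e}_1,\dots,\mathbf{e}_k$, and apply the classical MET to each diffeomorphism $\a(\mathbf{e}_j)$ to produce an $\a(\mathbf{e}_j)$-invariant Oseledets splitting. Because the $\a(\mathbf{e}_j)$ pairwise commute, each $D\a(\mathbf{e}_j)$ preserves the Oseledets subspaces of every $D\a(\mathbf{e}_i)$: commuting conjugation cannot alter asymptotic exponential growth rates. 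Taking the finest common refinement of the $k$ Oseledets splittings yields a single measurable decomposition of $TM$, defined on a set of full measure and preserved by $D\a(\t)$ for every $\t\in\Zk$. Local freeness identifies $T\Or$ as a $k$-dimensional invariant summand on which all asymptotic growth vanishes; the remaining summands are the candidates $E_i$.

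Next I would verify that the Lyapunov exponent
$$
\chi_i(\t) = \lim_{n\to\pm\infty} n^{-1}\log\|D\a(n\t)v\|,\qquad v\in E_i(x)\setminus\{0\},
$$
is well defined $\mu$-a.e., independent of $v$, and depends linearly on $\t\in\Rk$. Existence of the limit for each fixed $\t$ and its independence of $v\in E_i(x)\setminus\{0\}$ follow from applying the classical MET to $\a(\t)$ alone. Additivity $\chi_i(\t+\s)=\chi_i(\t)+\chi_i(\s)$ is a direct consequence of the cocycle identity $D\a(\t+\s)=D\a(\s)\circ D\a(\t)$ together with Kingman's subadditive ergodic theorem. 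This gives $\Q$-linearity of $\chi_i$ on $\Q^k$; uniform continuity of $\t\mapsto D\a(\t)$ on compact subsets of $\Rk$, guaranteed by the $\Ce$-regularity of the action, then promotes $\chi_i$ to a genuine linear functional on $\Rk$.

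For the uniform estimates (1)--(3) I would define $C_\e(x)$ to be the maximum of the two suprema
$$
\sup_{\t,i,v} \frac{\|D\a(\t)v\|}{\|v\|\exp\bigl(\chi_i(\t)+\tfrac12\e\|\t\|\bigr)}
\quad\text{and}\quad
\sup_{\t,i,v} \frac{\|v\|\exp\bigl(\chi_i(\t)-\tfrac12\e\|\t\|\bigr)}{\|D\a(\t)v\|}
$$
taken over $\t\in\Rk$, $i=1,\dots,l$, and $v\in E_i(x)\setminus\{0\}$, and set $K_\e(x)$ to be the minimum sine of pairwise angles between $T\Or,E_1,\dots,E_l$ at $x$, damped by an analogous exponential correction. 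Finiteness of $C_\e(x)$ and positivity of $K_\e(x)$ on a set of full measure are standard Pesin-theoretic consequences of the MET combined with Egorov's theorem. The tempering bounds (3) then follow directly from the cocycle identity: shifting the supremum variable at $\t x$ by $\t$ and invoking linearity of $\chi_i$ yields the factor $\exp(\e\|\t\|)$, after possibly absorbing constants into $\e$.

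The main obstacle I expect is coherently producing a single regular set $\Re$ of full measure valid for all $\t\in\Rk$ simultaneously, rather than a $\t$-dependent set $\Re_\t$, and upgrading $\Q$-linearity of $\chi_i$ to full $\R$-linearity. The remedy is standard: fix a countable dense subset $D\subset\Rk$, intersect the Oseledets-regular sets $\Re_\t$ for $\t\in D$ (still of full measure), and then use the tempered constants together with continuity of $\t\mapsto D\a(\t)$ to extend the splitting and the exponential bounds coherently from $D$ to all of $\Rk$. This simultaneously secures $\R$-linearity of each $\chi_i$ and completes the construction of $\Re$.
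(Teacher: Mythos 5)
The paper does not actually prove Proposition~\ref{MET}: it is stated in Section~3 (``Preliminaries'') as standard background, a higher-rank version of the Oseledets--Pesin multiplicative ergodic theorem, to be found in references such as \cite{BP} or the authors' earlier work \cite{KKRH}. So there is no ``paper's own proof'' to compare with; your argument must be judged on its own merits.

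Your outline follows the standard route and its main ingredients are correct: apply the one-parameter MET to the generators $\a(\mathbf{e}_j)$; use the bounded-derivative, commuting-cocycle observation to see that each $D\a(\mathbf{e}_j)$ carries the Oseledets subspaces of $D\a(\mathbf{e}_i)$ at $x$ to those at $\a(\mathbf{e}_j)x$; take the common refinement; define tempered constants $C_\e, K_\e$ by the usual Pesin suprema; and pass from a countable dense set of $\t$'s to all of $\Rk$ by continuity. A few points are under-justified as written and deserve more care. First, additivity $\chi_i(\t+\s)=\chi_i(\t)+\chi_i(\s)$ does not follow ``directly'' from Kingman (which only yields subadditivity); the cleanest argument combines the subadditive inequality $\chi_i(\t+\s)\le \chi_i(\t)+\chi_i(\s)$ with the two-sided limit in the MET, which gives $\chi_i(-\s)=-\chi_i(\s)$, whence the reverse inequality follows by writing $\t = (\t+\s) + (-\s)$. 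Second, individual generators $\a(\mathbf{e}_j)$ need not be $\mu$-ergodic, so the one-parameter exponents and multiplicities are a priori only measurable invariant functions; one must invoke ergodicity of the full $\Rk$-action to conclude they are a.e.\ constant. Third, your passage from $\Q$-linearity to $\R$-linearity is right, but the relevant continuity input is simply boundedness of $\chi_i$ near $0$, which follows from $\sup_{x\in M}\|D\a(\t)_x^{\pm1}\|\to 1$ as $\t\to 0$; invoking Egorov here is not what is needed. Finally, without assuming hyperbolicity the zero subspace of the refinement can be strictly larger than $T\Or$, so separating $T\Or$ as a distinguished summand requires an explicit choice of invariant complement inside the zero-exponent subspace rather than being automatic from local freeness. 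None of these is a fatal gap; they are the expected technical points in fleshing the sketch into a full proof.
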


The stable and unstable distributions $E^s _{\at}$ and $E^u _{\at}$ of an element $\a(\t)$ are defined as
the sums of the Lyapunov distributions corresponding to
the negative and the positive Lyapunov exponents for $\at$ respectively.  Notice that stable and unstable distributions are the same within a  Weyl chamber, and conversely,  the set of    vectors  with  given stable an  unstable distributions, if non-empty, is a Weyl chamber. A minimal non-zero intersection of stable distributions  for various elements of the action  is called a {\em  coarse Lyapunov  distribution}. Equivalently, any coarse Lyapunov distribution is the sum of Lyapunov distributions  corresponding
all Lyapunov exponents  positively proportional to each other. Notice that
in the absence of positively proportional Lyapunov exponents, in particular, for the strongly simple case considered in this paper, coarse Lyapunov distributions coincide with Lyapunov distributions.

\subsection{Invariant manifolds and affine structures}
We will use  standard material on invariant  manifolds corresponding to the
negative and positive Lyapunov exponents (stable and unstable manifolds) for
$\Ce$ measure preserving diffeomorphisms of compact manifolds, see for
example \cite[Chapter 4]{BP}. In particular, stable distributions and hence their transversal intersections are always H\"older continuous (see, for example,
\cite{BPbook}).  Here is a summary of some of those results adapted to the case of an $\Rk$ action.
 \begin{proposition} \label{Holderness}

Let $\a$ be a $\Ce,\, \theta >0$ action of  $\R^k$
as in Proposition \ref{MET}.
Suppose that a Lyapunov distribution $E$ is the intersection of the stable distributions
of some elements of the action. Then $E$ is H\"older continuous on any Pesin set
\begin {equation} \label{Pesinset}
\Re_{\e}^l =\{ x \in \Re  : \; C_\e (x) \le l, K_\e (x) \ge l^{-1} \}
\end{equation}
with H\"older constant which depends on $l$ and H\"older exponent $\delta>0$ which
depends on the action $\a$ only.

Furthermore on those sets the size of local stable manifolds for any element of $\a$  is bounded  away from below.
 \end{proposition}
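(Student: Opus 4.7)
The plan is to reduce the statement to the classical Hölder regularity theorem for stable distributions of a single non-uniformly hyperbolic diffeomorphism (see \cite[Chapter 4]{BP} or the Brin--Pesin type arguments in the monograph of Barreira--Pesin), and then take intersections in a controlled way.

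First I would choose finitely many elements $\t_1,\ldots,\t_m\in\Rk$ such that $E=\bigcap_{i=1}^m E^s_{\a(\t_i)}$; this is possible by the hypothesis on $E$ and the finiteness of the collection of Lyapunov exponents. The next step is to observe that the Pesin set $\Re^l_\e$ for the $\Rk$-action, as defined by \eqref{Pesinset}, is a Pesin set for each individual diffeomorphism $\a(\t_i)$: indeed, item (1) of Proposition~\ref{MET} applied to $\t=n\t_i$ gives, on $\Re^l_\e$, the Lyapunov estimates needed to place $x$ in a Pesin block for $\a(\t_i)$ whose constants are controlled by $l$ and $\|\t_i\|$, while item (3) controls the drift of $C_\e$ and $K_\e$ along the orbit of $\a(\t_i)$.

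With this translation in hand, I would invoke the classical Hölder continuity of the stable distribution on Pesin sets for each $\a(\t_i)$: the distribution $E^s_{\a(\t_i)}$ is Hölder continuous on $\Re^l_\e$ with an exponent $\delta_i>0$ that depends only on the ratios of Lyapunov exponents of $\a(\t_i)$ (the spectral gap governing the graph transform contraction), and with a Hölder constant depending on $l$. Since the set of Lyapunov exponents and the finite family $\{\t_1,\ldots,\t_m\}$ are intrinsic to the action, the exponent $\delta=\min_i\delta_i$ depends only on $\a$. Now $E$ is realized as the intersection of the graphs of these finitely many Hölder distributions, which are mutually transverse along complementary Lyapunov subspaces with uniform angle bounded below by $K_\e(x)\ge l^{-1}$ on $\Re^l_\e$. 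Under such uniform transversality, the intersection is Hölder continuous with exponent $\delta$ and constant depending only on $l$ (a routine linear-algebra computation using Cramer's rule in an adapted frame).

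For the second part, the Pesin stable manifold theorem applied to each $\a(\t_i)$ guarantees, at every point of its Pesin block, a local stable manifold of size bounded below by a function of the Pesin constants. Since those constants are uniformly controlled on $\Re^l_\e$ by $l$ and $\|\t_i\|$, the size of the local stable manifold of $\a(\t_i)$ through $x\in\Re^l_\e$ is bounded below by a positive constant depending only on $l$ and $\t_i$. The main obstacle I foresee is verifying that the Hölder exponent really can be taken independent of $l$ and $\e$; this comes down to inspecting the graph-transform proof of Hölder regularity and noting that the contraction ratio is determined by the spectral gaps of the cocycle, which are action-intrinsic, while $l$ and $\e$ only enter through multiplicative constants.
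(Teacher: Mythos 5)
Your proposal is correct, and it reconstructs exactly the argument that the paper does not bother to spell out. Proposition~\ref{Holderness} is not proved in the paper at all: it is stated as ``a summary of some of those results adapted to the case of an $\Rk$ action,'' with a citation to \cite{BP} and \cite{BPbook}. Your reduction to finitely many elements $\t_1,\dots,\t_m$ with $E=\bigcap_i E^s_{\a(\t_i)}$, the observation that the $\Rk$-Pesin set $\Re^l_\e$ lies in a Pesin block for each single diffeomorphism $\a(\t_i)$ via items (1) and (3) of Proposition~\ref{MET}, the invocation of H\"older regularity of stable distributions for each $\a(\t_i)$ with exponent governed by spectral gaps, the transversality lemma for H\"older continuity of intersections using the lower angle bound $K_\e(x)\ge l^{-1}$, and the appeal to the Pesin stable manifold theorem for the size bound are precisely the contents of the cited references adapted to this setting, so you are following the paper's intended route rather than a genuinely different one.
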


We will denote by $\w_\at^s(x)$ the (global) stable manifold for $\at$ at a
regular point $x$. This manifold is an immersed Euclidean space tangent to the
stable distribution $E_\at^s$. The unstable manifold $\w_\at^u(x)$ is defined as
the stable one for $\a(-\t)$ and thus have similar properties.
Local stable/unstable  manifolds will be denoted by  $W_\at^s(x)$ and  $W_\at^u(x)$ correspondingly.

Intersections of stable manifolds for different elements of the action  are integral manifolds for coarse Lyapunov distributions and they form a
{\em coarse Lyapunov foliations}. While in general Lyapunov foliations may not be uniquely integrable this is obviously the  case  in the absence  of positively proportional Lyapunov exponents. Hence everywhere in this paper we will talk about {\em Lyapunov foliations}.
These foliations  are defined for any Lyapunov distribution $E$ as in Proposition
\ref{Holderness}. We will denote the Lyapunov foliation corresponding to the exponent $\chi$ by  $\w$ and its  local leaf  at a regular point $x$ by $W(x)$.

As a  comment on terminology  let us emphasize that
it is customary to use words ``distributions'' and ``foliations'' in this setting although
these objects are correspondingly measurable  families of tangent spaces defined
a.e. and measurable families of smooth manifolds which fill a set of full measure.

Let us also recall the existence of affine structures. Let $\a$ be
an action as in Theorem \ref{tech}. The following proposition
provides $\a$-invariant affine parameters on the leaves of the
Lyapunov foliation $\w$.

\begin{proposition}\cite[Proposition 3.1., Remark 5]{kk}\label{affineparameters}
There exist a unique family of $C^{1+\theta}$ smooth $\a$-invariant
affine parameters on the leaves $\w(x)$. Moreover, they depend
uniformly continuously in $\C^{1+\theta}$ topology on $x$ in any Pesin
set.
\end{proposition}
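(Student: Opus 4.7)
The plan is to build the affine parameters by a non-stationary normal form (Sternberg-type linearization) along the one-dimensional leaves of $\w$, using the absence of other Lyapunov exponents proportional to $\chi$ to rule out resonances.

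Fix a regular element $\t \in \Rk$ with $\chi(\t) < 0$, so that $\a(\t)$ contracts the leaves of $\w$ with a sharp exponential rate. Choose a measurable unit vector field $v(x) \in E(x)$ on the Pesin regular set and let $\lambda(x) := \log \|D\a(\t)\, v(x)\|$, so by Proposition \ref{MET} the Birkhoff sums of $\lambda$ along the $\a(\t)$-orbit grow linearly with slope $\chi(\t)$. Working in a one-dimensional chart $\xi$ on $\w(x)$ obtained from a Lyapunov chart of size controlled by $K_\e$, write the leaf dynamics as $\a(\t)_x(\xi) = e^{\lambda(x)} \xi + r_x(\xi)$ with $|r_x(\xi)|, |r_x(\xi) - r_x(\eta)|/|\xi-\eta|^\theta \le C_\e(x)^{O(1)}|\xi|^{1+\theta}$ using the $C^{1+\theta}$-regularity of $\a(\t)$ and the choice of chart. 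Since there is only one Lyapunov exponent along $\w$, no non-trivial resonance $\chi = m\chi$ with $m \ge 2$ can occur, so standard Sternberg linearization in the non-stationary, non-uniform setting applies.

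Concretely, define
\[
\phi_x(\xi) := \lim_{n\to\infty} e^{-\lambda_n(x)}\,\a(n\t)_x(\xi),\qquad \lambda_n(x) := \sum_{i=0}^{n-1} \lambda(\a(i\t)x).
\]
A direct telescoping argument shows that the difference between successive terms is dominated by $e^{-\lambda_n(x)} \cdot |r_{\a(n\t)x}(\a(n\t)_x \xi)|$, which by the Pesin estimates is bounded, in the uniform $C^{1+\theta}$-norm on a neighbourhood of $0$, by
\[
C_\e(x)^{O(1)} \exp\!\bigl((1+\theta)\chi(\t)n + O(\e) n\bigr),
\]
a geometric series once $\e$ is chosen small with respect to $|\chi(\t)|\theta$. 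Thus the sequence converges in $C^{1+\theta}$, its limit is $C^{1+\theta}$-tangent to the identity at $x$, and by construction satisfies the intertwining relation
\[
\phi_{\a(\t)x}\circ \a(\t)|_{\w(x)} = e^{\lambda(x)} \cdot \phi_x.
\]
This is the affine-parameter normalization. Uniform continuity of $\phi_x$ in the $C^{1+\theta}$-topology on each Pesin set $\Re_\e^l$ follows from the uniform bounds $C_\e(x) \le l$, $K_\e(x) \ge l^{-1}$ and the H\"older dependence of $r_x$ on $x$ in local coordinates.

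Uniqueness follows from the standard rigidity for $C^{1+\theta}$ conjugations of linear contractions: if $\psi_x$ is another such family, then $\psi_x \circ \phi_x^{-1}$ is a $C^{1+\theta}$ self-conjugation of the pure contraction $\xi \mapsto e^{\lambda(x)}\xi$ that is tangent to the identity at the origin, hence the identity. The $\a$-invariance over the full $\Rk$-action, rather than just $\langle \t\rangle$, is then automatic: for any $\s \in \Rk$, the family $x \mapsto \phi_{\a(\s)x}\circ \a(\s)|_{\w(x)}$ satisfies the same intertwining relation as $\phi_x$ (the actions commute and $\chi$ is a homomorphism), so uniqueness forces it to be an affine rescaling of $\phi_x$, which is exactly what it means for the parameters to be $\a$-invariant as an affine structure.

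The main obstacle is the telescoping convergence estimate in $C^{1+\theta}$ uniformly on Pesin sets: one must balance the subexponential Pesin factors $e^{\e n}$ against the exponential rate $(1+\theta)\chi(\t) < 0$, so the argument works only for $\e$ small relative to $|\chi(\t)|\theta$, and the H\"older norm of the nonlinear remainder $r_x$ must be controlled by products of $C_\e$-powers. Once these estimates are in place, the remaining statements (uniqueness, invariance, continuous dependence) are formal consequences.
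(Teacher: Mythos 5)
Your construction is essentially the same as the one in the cited reference \cite[Proposition 3.1]{kk}: a non-stationary linearization along the one-dimensional leaves, obtained as the $C^{1+\theta}$ limit of the rescaled iterates $e^{-\lambda_n(x)}\,\a(n\t)|_{\w(x)}$, with the telescoping convergence controlled by Pesin estimates, uniqueness via rigidity of contractions, and invariance under the whole $\Rk$-action deduced from uniqueness and commutativity. The present paper only cites this proposition rather than re-proving it, and your sketch reproduces the standard argument correctly; the one small inaccuracy is that the decay rate of the telescoping differences, once the prefactor $e^{-\lambda_n(x)}$ is absorbed against $|r_{\a(n\t)x}(\a(n\t)_x\xi)|\lesssim e^{(1+\theta)\chi(\t)n+O(\e)n}$, is $\exp(\theta\chi(\t)n+O(\e)n)$ rather than $\exp((1+\theta)\chi(\t)n+O(\e)n)$, which your subsequent smallness condition $\e\ll|\chi(\t)|\theta$ already reflects.
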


\subsection{Lyapunov metrics and synchronizing time change}\label{sbs:synchronizing}
We fix a smooth Riemannian metric $<\cdot,\cdot>$ on $M$. Given $\e>0$
and a regular point $x \in M$ we define {\em the standard $\e$--Lyapunov scalar
product (or metric)} $<\cdot,\cdot>_{x,\e}$ as follows. For any $u,v\in E(x)$ we define
\begin{equation}\label{eqLyapunovmetric}
<u,v>_{x,\e}=\int_{\R^k}<(D \s)  u, (D \s)  v>
\exp(-2\chi(\s)-2\e\|\s\|) \, d\s.
\end{equation}

We shall need to use the time change we introduced with B. Kalinin
in \cite{KKRH} in the context of Theorem \ref{tech}. Let
$L=\ker\chi$, fix a vector $\mathbf w\in\R^k$ normal to $L$ with
$\chi(\mathbf w)=1$ and take $\e>0$ small such that $\e\|\mathbf
w\|$ is also small, in particular less than $1/2$.

\begin{proposition}\cite[Proposition 6.2, Proposition 6.3]{KKRH}\label{timechange}
For $\mu$-a.e. $x$ and any $\t\in\R^k$ there exists $g(x,\t)\in \R^k$
such that the function $\g(x,\t)=\t+g(x,\t)\mathbf w$ satisfies the
equality
$$\|D_x^E\a(\g(x,\t))\|_\e=e^{\chi(\t)}.$$
The function $g(x,\t)$ is measurable and is H\"older continuous on
Pesin sets, is $C^1$ in $\t$ and $|g(x,\t)|\leq 2\e\|\t\|$.
Moreover, the formula $\beta(\t,x)=\a(\g(x,\t))x$ defines an $\R^k$
action $\beta$ on $M$ which is a measurable time change of $\a$. The
action $\beta$ is measurable and continuous on Pesin sets for $\a$
and preserves a measure $\nu$ which is equivalent to $\mu$.
\end{proposition}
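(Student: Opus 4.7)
The plan is to reduce the existence of $g(x,\t)$ to a one-dimensional monotone root-finding problem. A direct computation from \eqref{eqLyapunovmetric}, via the change of variables $\s\mapsto\s-\t$ inside the integral and using that $E$ is one-dimensional, yields the two estimates
\begin{equation*}
\bigl|\log\|D_x^E\a(\t)\|_\e-\chi(\t)\bigr|\le\e\|\t\|,\qquad
\Bigl|\partial_s\log\|D_x^E\a(\t+s\mathbf w)\|_\e-1\Bigr|\le \e\|\mathbf w\|,
\end{equation*}
where $\chi(\mathbf w)=1$ was used. For $\e\|\mathbf w\|<1/2$, the function $s\mapsto\log\|D_x^E\a(\t+s\mathbf w)\|_\e$ is therefore strictly increasing with derivative at least $1/2$, and equals $\chi(\t)+O(\e\|\t\|)$ at $s=0$, so by the intermediate value theorem it hits $\chi(\t)$ at a unique scalar $s=g(x,\t)$ with $|g(x,\t)|\le 2\e\|\t\|$.

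Regularity follows from the implicit function theorem: the defining equation is $C^1$ in $(\t,s)$ with nonvanishing $\partial_s$, giving $C^1$ dependence of $g$ on $\t$; measurability in $x$ is inherited from that of the Lyapunov metric; H\"older continuity on each Pesin set $\Re_\e^l$ is transferred from the H\"older continuity of $E$ and of $x\mapsto\log\|D_x^E\a(\t)\|_\e$ on $\Re_\e^l$ (Proposition~\ref{Holderness}) through the implicit equation. To see that $\beta(\t,x)=\a(\g(x,\t))x$ is an $\R^k$-action, one verifies the cocycle identity $\g(x,\t_1+\t_2)=\g(x,\t_2)+\g(\beta(\t_2,x),\t_1)$. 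This is forced by uniqueness of $g$ together with the chain rule
\begin{equation*}
\|D_x^E\a\bigl(\g(x,\t_2)+\g(\beta(\t_2,x),\t_1)\bigr)\|_\e
=\|D_{\beta(\t_2,x)}^E\a(\g(\beta(\t_2,x),\t_1))\|_\e\cdot\|D_x^E\a(\g(x,\t_2))\|_\e
=e^{\chi(\t_1+\t_2)},
\end{equation*}
since the left-hand sum has $\R^k$-component equal to $\t_1+\t_2$.

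The step I expect to be the most subtle is the construction of the equivalent $\beta$-invariant measure $\nu$. Because $\beta$ reparametrizes $\a$-orbits by the near-identity diffeomorphism $\g(x,\cdot):\R^k\to\R^k$, the natural candidate is $d\nu=J(x)^{-1}d\mu$ with $J(x)=\det D_\t\g(x,\t)|_{\t=0}$. Since $D_\t\g(x,0)=\Id+\mathbf w(\nabla_\t g(x,0))^{\top}$ is a rank-one perturbation of the identity, the matrix determinant lemma gives $J(x)=1+\partial_sg(x,s\mathbf w)|_{s=0}\in[1-2\e\|\mathbf w\|,\,1+2\e\|\mathbf w\|]$, so $\nu\sim\mu$ with densities bounded above and below. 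Making $\beta$-invariance of $\nu$ fully rigorous requires disintegrating $\mu$ along $\a$-orbits by Rokhlin's theorem, pushing the conditional Lebesgue measures forward through $\g(x,\cdot)$, and applying the cocycle identity above; the delicate point is that $g$ is only globally measurable, so one must work on a sequence of exhausting Pesin sets where $g$ is H\"older and all geometric constructions have uniform meaning.
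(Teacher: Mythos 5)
The statement in question is not proved in this paper: it is cited verbatim from \cite[Propositions 6.2, 6.3]{KKRH}, so there is no in-paper proof against which to compare. That said, your reconstruction is correct and recovers exactly the quantitative content of the proposition: the change of variables $\s\mapsto\s-\t$ in \eqref{eqLyapunovmetric} does give $e^{\chi(\t)-\e\|\t\|}\le\|D_x^E\a(\t)\|_\e\le e^{\chi(\t)+\e\|\t\|}$ and $|\partial_s\log\|D_x^E\a(\t+s\mathbf w)\|_\e-1|\le\e\|\mathbf w\|<1/2$, which forces a unique root with $|g(x,\t)|\le 2\e\|\t\|$, and the cocycle identity $\g(x,\t_1+\t_2)=\g(x,\t_2)+\g(\beta(\t_2,x),\t_1)$ follows by uniqueness once one notes that operator norms are exactly multiplicative along the one-dimensional bundle $E$; this is the argument expected from the cited source. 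The only part you leave at the level of a sketch is the construction of the $\beta$-invariant measure $\nu$; your candidate density $1/\det D_\t\g(x,\cdot)|_{\t=0}$ is the right one, and as you note, making it rigorous requires Rokhlin disintegration along $\a$-orbits where the conditional measures are Lebesgue and the reparametrization acts fiberwise as a near-identity diffeomorphism of $\R^k$ — this matches the standard proof and is a fair place to defer detail.
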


Now we describe  invariant ''foliations" for $\beta$ whose leaves are not  smooth  but still these objects  have properties close to
true invariant foliations for smooth actions. Let us denote with
$\N$ the orbit foliation of the one-parameter subgroup $\{r\mathbf
w\}$.

\begin{proposition}\label{invariantmanifoldtimechange}
For any element $\s\in\R^k$ there exist a stable "foliation"
$\tilde{\w}^s_{\beta(\s)}$ which is contracted, by $\beta(\s)$ and
invariant under the new action $\beta$. It consists of "leaves"
$\tilde{\w}^s_{\beta(\s)}(x)$ defined for every $x$. The "leaf"
$\tilde{\w}^s_{\beta(\s)}(x)$ is a measurable subset of the leaf
$(\N\oplus\w^s_{\a(\s)})(x)$ of the form
$$\tilde{\w}^s_{\beta(\s)}(x)=\{\a(\phi_x(y)\mathbf w)y:y\in\w^s_{\a(\s)}(x)\},$$
where $\phi_x:\w^s_{\a(\s)}(x)\to\R$ is an almost everywhere defined
measurable function. For $x$ in a Pesin set, the $\phi_x$ is
H\"older continuous on the intersection of this Pesin set with any
ball of fixed radius in $\w^s_{\a(\s)}(x)$ with H\"older exponent
$\gamma$ and constant which depends on the Pesin set and radius.
\end{proposition}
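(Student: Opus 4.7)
The plan is to realize each leaf $\tilde{\w}^s_{\beta(\s)}(x)$ as a graph in the $\mathbf w$-direction over the $\a$-stable leaf $\w^s_{\a(\s)}(x)$, and to define the graphing function $\phi_x$ via a Cauchy-type limit along the $\beta$-orbit. First I iterate Proposition~\ref{timechange} to obtain, using the cocycle relation for $g$,
$$
\beta(n\s)y=\a(n\s+G_n(y)\mathbf w)y,\qquad G_n(y)=\sum_{i=0}^{n-1}g(\beta(i\s)y,\s)=g(y,n\s),
$$
with $|G_n(y)|\le 2\e\|n\s\|$ sublinear. Because $\R^k$ is abelian, each $\a(r\mathbf w)$ commutes with every $\a(\t)$, from which a short computation yields the key identity
$$
\beta(\t)\a(r\mathbf w)y=\a\bigl((r+g(\a(r\mathbf w)y,\t)-g(y,\t))\mathbf w\bigr)\beta(\t)y.
$$
Combined with $\a$-invariance of $\w^s_{\a(\s)}$ this shows that $(\N\oplus\w^s_{\a(\s)})(x)$ is $\beta$-invariant, so the problem reduces to choosing one height $\phi_x(y)$ for each $y\in\w^s_{\a(\s)}(x)$.

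Second, for $y\in\w^s_{\a(\s)}(x)$ I would set
$$
\phi_x(y)=\lim_{n\to\infty}\bigl(G_n(x)-G_n(y)\bigr),
$$
whose $n$-th increment is $g(\beta(n\s)x,\s)-g(\beta(n\s)y,\s)$. Using H\"older continuity of $g$ on Pesin sets (Proposition~\ref{timechange}), the exponential contraction $d(\a(n\s)x,\a(n\s)y)\le C e^{-\la n}d(x,y)$ on $\w^s_{\a(\s)}$ coming from Proposition~\ref{MET}, and the fact that $\beta(n\s)x$ differs from $\a(n\s)x$ only by a $\mathbf w$-shift of sub-exponential size $|G_n(x)|\le 2\e\|n\s\|$, one extracts via Birkhoff's theorem for the $\beta$-invariant measure $\nu$ a positive-density subsequence of times at which both $\beta(n\s)x$ and $\beta(n\s)y$ lie in a common Pesin set $\Re^l_\e$; the H\"older bound then pits against the exponential stable contraction to give absolute convergence of the series. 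The resulting $\phi_x$ is measurable and is characterized intrinsically as the unique $r\in\R$ for which $\beta(n\s)\a(r\mathbf w)y$ stays within bounded distance of $\beta(n\s)x$ as $n\to\infty$.

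Third, contraction of $\tilde{\w}^s_{\beta(\s)}(x)$ by $\beta(\s)$ is immediate from this characterization. For full $\beta$-invariance, the algebraic identity of paragraph one shows that $\beta(\t)$ preserves graph-type sections of $(\N\oplus\w^s_{\a(\s)})$; combined with the fact that $\beta(\t)$ commutes with $\beta(n\s)$, the defining boundedness condition for $\phi_x$ transports directly, forcing $\beta(\t)\tilde{\w}^s_{\beta(\s)}(x)=\tilde{\w}^s_{\beta(\s)}(\beta(\t)x)$. H\"older regularity of $\phi_x$ on the intersection of a Pesin set with a ball of fixed radius follows by combining H\"older continuity of $g$ (Proposition~\ref{timechange}), H\"older dependence of $\a$-stable manifolds on the base point within Pesin sets (Proposition~\ref{Holderness}), and the geometric tail of the series.

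The main obstacle will be convergence of the series defining $\phi_x$. Since $g$ is only H\"older on Pesin sets of $(M,\mu)$, the estimate $|g(\beta(n\s)x,\s)-g(\beta(n\s)y,\s)|\le Cd(\beta(n\s)x,\beta(n\s)y)^\gamma$ holds only at the good times when both iterates fall into a common Pesin level, and the H\"older constant degrades along the orbit by the sub-exponential factor $e^{\e\|n\s\|}$ from item (3) of Proposition~\ref{MET}. Balancing the positive density of good recurrence times, the genuine exponential $\a$-stable contraction $e^{-\la n}$, and the $e^{\e\|n\s\|}$ loss from level changes is the technical heart of the construction, and is the standard non-uniformly hyperbolic analogue of the contraction-mapping proof of Pesin stable manifolds.
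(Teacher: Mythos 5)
The paper does not actually supply a proof of Proposition~\ref{invariantmanifoldtimechange}: it imports the construction of the stable ``foliations'' for the time change $\beta$ from \cite{KKRH} (where it accompanies Propositions 6.2--6.3), so there is no in-paper proof to compare against. Judged on its own, your strategy---realize each $\tilde{\w}^s_{\beta(\s)}(x)$ as a graph in the $\mathbf w$-direction over $\w^s_{\a(\s)}(x)$, using the cocycle $g$ for the time change---is the right geometric picture. But there is a genuine gap in the central step, the explicit formula $\phi_x(y)=\lim_{n\to\infty}(G_n(x)-G_n(y))$, and the convergence argument that is supposed to support it is circular.

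Write $u_n:=G_n(x)-G_n(y)$, so the increments of your series are $u_{n+1}-u_n=g(\beta(n\s)x,\s)-g(\beta(n\s)y,\s)$. The two iterates $\beta(n\s)x=\a(G_n(x)\mathbf w)\a(n\s)x$ and $\beta(n\s)y=\a(G_n(y)\mathbf w)\a(n\s)y$ sit over base points $\a(n\s)x,\a(n\s)y$ which do contract exponentially along $\w^s_{\a(\s)}$, but they are pushed along the orbit by \emph{different} amounts $G_n(x)$ and $G_n(y)$; hence $d(\beta(n\s)x,\beta(n\s)y)$ is, up to an exponentially small transverse error, of size $|u_n|$. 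If the limit you are trying to define is nonzero, then $|u_n|$ stays bounded away from zero, and the H\"older estimate for $g$ on Pesin sets only gives $|u_{n+1}-u_n|\lesssim|u_n|^\gamma+e^{-\la\gamma n}$, which is not summable. Summability of the increments thus presupposes the very decay of $u_n$ that one wants to establish. Moreover, even granting convergence, the limit $\lim(G_n(x)-G_n(y))$ is not the correct graphing height: a direct computation shows that for $z=\a(r\mathbf w)y$ one has $\beta(n\s)z=\a((G_n(z)+r)\mathbf w)\a(n\s)y$, so the requirement $d(\beta(n\s)z,\beta(n\s)x)\to0$ is $G_n(x)-G_n(z)-r\to0$, i.e.\ $r=\lim(G_n(x)-G_n(z))$, where $z$ itself depends on $r$. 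This is a fixed-point equation, and replacing $G_n(z)$ by $G_n(y)$ silently changes the answer unless one separately proves $G_n(z)-G_n(y)\to0$, which again is equivalent to what one is trying to prove.

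The way out, as in the analogous classical construction of stable manifolds for time changes of Anosov flows, is to build $\phi_x$ by an implicit function/contraction-mapping argument (or a graph transform) in Pesin charts, rather than by a telescoping series along the orbit of the fixed points $x$ and $y$. One works on a Pesin block, parametrizes candidate graph sections $\phi$ over a ball in $\w^s_{\a(\s)}(x)$, and shows that the operator induced by one application of $\beta(\s)$ (using the H\"older bound on $g$ together with the exponential contraction on the $\a$-stable base) is a contraction on a suitable space of sections with controlled H\"older norm. The a.e.\ definedness and the H\"older regularity stated in the proposition then come out of this scheme automatically, whereas your direct limit formula does not produce a well-defined limit without already knowing the conclusion. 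The rest of your outline---the cocycle identity for $g$, the observation that the $\R^k$-commutativity makes $(\N\oplus\w^s_{\a(\s)})(x)$ an invariant saturated set, and the Birkhoff-density-of-good-times device for controlling nonuniformity---are all ingredients that reappear in the correct fixed-point argument, so your proposal is salvageable once the circular step is replaced.
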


We will use the fact for any   $\s\in\R^k$  the partition into global  stable manifolds $\tilde{\w}^s_{\beta(\s)}(x)$ refines
the partition into ergodic components  of $\beta(\s)$.

\subsection{Ledrappier-Young entropy formula}\label{lyentropyformula}
 Let $f:M\to M$ be
$\Ce$ diffeomorphism and let $\mu$ be an ergodic invariant measure.
Let $\chi_1>\chi_2>\dots>\chi_r$ be its Lyapunov exponents and let
$TM=E_1\oplus\dots\oplus E_r$ be the corresponding Lyapunov decomposition. Let
$u=\max\{i:\chi_i>0\}$ and for $1\leq i\leq u$ let us define
$$V^i(x)=\{i\in M\limsup_{n\to\infty}\frac{1}{n}\log
d(f^{-n}(x),f^{-n}(y))\leq -\chi_i\}. $$ For a.e. $x$\,\,  $V^i(x)$ is a smooth
manifold  tangent to $\bigoplus_{j\leq i} E_j$ and we
have the flag $V^1\subset V^2\subset\dots\subset V^u$ with
$V^u=W^u$, the unstable manifold. We can build partitions $\xi^i$
subordinated to $V^i$ as the ones built in \cite{LY2} and consider
conditional measures $\mu_x^i$. Let  $B^i(x,\e)$  be the $\e$
ball in $V^i(x)$  centered in $x$ with respect to  the induced Riemannian metric.  Then 
$$\delta_i=\delta_i(f)=\lim_{\e\to 0}\frac{\log\mu_x^iB^i(x,\e)}{\log\e}$$
exists a.e. and does not depend on $x$. Moreover, calling
$\gamma_i=\gamma_i(f)=\delta_i-\delta_{i-1}$ we have the
Ledrappier-Young entropy formula (see \cite[Theorem C]{LY2})
\begin{equation} \label{LYF} h_\mu(f)=\sum_{1\leq j\leq u}\gamma_j\chi_j.\end{equation}
In fact, a more precise statement is true.
Given a measure $\mu$ and two measurable partitions $\alpha,\beta$, the conditional entropy is defined by:
$$H(\alpha|\beta)=-\int\log\mu_x^\beta(\alpha(x))d\mu.$$
 Let $T:X\to X$ be a measure preserving transformation. Given a measurable
partition $\alpha$ we define the entropy of $T$ w.r.t. $\alpha$ by
$h(\alpha,T)=H(T^{-1}\alpha|\alpha^+)$ where
$\alpha^+=\bigvee_{n\geq 0}T^n\alpha$.

We have for every $1\leq i\leq u$
$$h_\mu(\xi_i,f)=\sum_{1\leq j\leq i}\gamma_j\chi_j.$$

The following  addition to the Ledrappier-Young formula  is needed
 for the  proof of the  converse to Theorem\ref{nondifofentropy}. 
 \medskip

\noindent $(\frak A)${\em If in Ledrappier-Young formula \eqref{LYF}  $\gamma_i=0$, i.e.
$$h_\mu(\xi_i,f)=h_\mu(\xi_{i-1},f)$$ then the  conditional measure on an almost every leaf of
$V^i$ is supported on a single leaf of  $V^{i-1}$}
\medskip

While this statement looks completely natural and is likely to be true, 
existing arguments and constructions do not yield a proof.

\subsection{Linearity of entropies}\label{linearentrop}
Given two commuting diffeomorphisms $f$ and $g$, preserving a measure $\mu$, with coinciding unstable manifolds, in \cite{hu}, H. Hu built a partition subordinated to this unstable manifold which is increasing for both maps. The same construction may be carried out for partitions subordinated to simultaneous fastest directions, following the same lines, to get the following analogous of Proposition 8.1. in \cite{hu}.

Let $f$ and $g$ be two diffeomorphisms preserving a measure $\mu$. Assume both maps preserve a measurable bundle $F\subset TM$ such that Lyapunov exponents associated to $F$, for both $f$ and $g$, are larger than any other Lyapunov exponents. Hence we get that $F$ is tangent to a "foliation" V which is exactly the fastest foliation associated to the first $\dim(F)$ exponents. 

\begin{proposition}\label{huspartitions}
There is a measurable partition $\eta$ on $M$ with the following properties.
\begin{enumerate}
\item $\eta$ is subordinated to $V$.
\item $\eta$ is increasing for $f$ and $g$, i.e. $f\eta<\eta$ and $g\eta<\eta$.
\item $\bigvee_{n\geq 0}f^{-n}\eta$ and $\bigvee_{n\geq 0}g^{-n}\eta$ are the partitions into points. 
\item The biggest $\sigma-$algebra contained in $\bigcap_{n\geq 0}\bigcap_{k\geq 0}f^{-n}g^{-k}\eta$ is the $\sigma-$algebra of sets saturated by leafs of $V$.
\end{enumerate}
\end{proposition}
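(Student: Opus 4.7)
The plan is to transcribe H.~Hu's argument for his Proposition 8.1 in \cite{hu}, replacing the role of the common unstable foliation by our ``fastest'' foliation $V$. The only quantitative inputs that argument uses are a uniform lower bound on the size of local leaves and strict exponential expansion along those leaves on Pesin blocks; since $F$ consists of the strictly largest Lyapunov exponents for both $f$ and $g$, standard Pesin theory supplies both of these uniformly for $V$ under $f$ and under $g$ simultaneously. Commutativity of $f$ and $g$ is the other structural hypothesis and it holds by assumption.

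First I would fix a Pesin block $\Lambda$ of large measure on which local $V$-manifolds $V_{\mathrm{loc}}(x)$ have size bounded below by some $\rho>0$ and on which both $Df|_F$ and $Dg|_F$ enjoy uniform expansion estimates. Choose a measurable transversal $T\subset\Lambda$ to $V$, and let $\xi_{0}$ be the measurable partition whose nontrivial atoms are pieces of $V_{\mathrm{loc}}$ of diameter at most $\rho/2$ based at points of $T$, with a single atom covering the complement of the corresponding foliation box. Then $\xi_{0}$ is subordinated to $V$ but typically fails to be increasing for $f$ or for $g$. To remedy this, set
\[
\eta=\bigvee_{n\geq 0}\bigvee_{k\geq 0} f^{n}g^{k}\,\xi_{0},
\]
which is again a measurable partition, each of whose atoms lies in a single $V$-leaf, and which by construction satisfies $f\eta<\eta$ and $g\eta<\eta$. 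The verification that $\eta$ remains subordinated to $V$ in the strong sense that its atoms contain full neighborhoods of their base points inside $V$ uses Hu's diameter bookkeeping on returns to $\Lambda$, which carries over because expansion along $V$ strictly dominates all other Lyapunov exponents for both $f$ and $g$.

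Property (1) is built in and (2) is immediate from the join definition. For (3), points on distinct $V$-leaves are already separated by $\xi_{0}$ and hence by $\eta$; two points on the same $V$-leaf drift apart exponentially along $V$ under forward iteration of $f$, and because $\eta$-atoms on returns to $\Lambda$ have uniformly bounded size, the two points must eventually fall in distinct $\eta$-atoms, so $\bigvee_{n\geq 0}f^{-n}\eta$ separates all points and is the point partition; the argument for $g$ is identical.

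The main obstacle is property (4). The easy inclusion, that the $V$-saturated $\sigma$-algebra is contained in $\bigcap_{n,k\geq 0}f^{-n}g^{-k}\mathcal{B}(\eta)$, follows since $\eta$ is subordinated to $V$ and $V$ is $f,g$-invariant, so every $V$-saturated set is a union of $\eta$-atoms and remains so under each $f^{-n}g^{-k}$. The reverse inclusion is the delicate point: one must show that, up to null sets, every set measurable with respect to the intersection is actually saturated by $V$-leaves. Here commutativity of $f$ and $g$ is essential, and the argument proceeds exactly as in \cite{hu}. The key observation is that because $\eta$-atoms along the same $V$-leaf can be shifted arbitrarily along that leaf by suitable combinations $f^{n}g^{k}$, any set in the tail $\sigma$-algebra must, on a full-measure set, contain whole $V$-leaves. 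Making this precise requires a Borel--Cantelli-type Pesin return argument combined with Hu's commutation trick, and this is where the bulk of the technical work concentrates.
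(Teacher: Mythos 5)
Your proposal is correct and takes essentially the same route as the paper, which simply asserts that Hu's construction (Proposition 8.1 of \cite{hu}) carries over verbatim to the simultaneously fastest foliation $V$ and notes that (3) and (4) are the standard consequences of (1) and (2). Your transcription of Hu's argument---a base partition refined by the semigroup join $\bigvee_{n,k\geq 0}f^{n}g^{k}\xi_{0}$, with the Borel--Cantelli/diameter bookkeeping on Pesin returns to control subordination---is exactly what the paper has in mind.
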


As always, $(3)$ and $(4)$ follows from $(1)$ and $(2)$. Also, as in \cite{LY}, we have that the entropy of $f$ and $g$ w.r.t. this partition does not depend on the partition as long as the partition is subordinated to $V$, i.e.  
\begin{lemma}
If $\eta$ and $\hat{\eta}$ are two partitions subordinated to $V$ as in Proposition \ref{huspartitions}, then $h_{\mu}(\eta,f)=h_{\mu}(\hat{\eta},f)$. The same holds for $f$, $g$ and $f\circ g$. 
\end{lemma}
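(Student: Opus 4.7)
The plan is to adapt the Ledrappier--Young argument \cite{LY} establishing partition-independence of entropy subordinated to an invariant foliation.

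First, for any partition $\eta$ satisfying the hypotheses of Proposition~\ref{huspartitions}, the conditions $f\eta<\eta$ and property (3) imply $\eta^+=\eta$ and $\bigvee_{n\ge 0}f^{-n}\eta$ is the point partition, so Rokhlin's formula yields
$$h_\mu(\eta, f) = H_\mu(f^{-1}\eta \,|\, \eta) = -\int \log \mu_x^\eta\bigl(f^{-1}\eta(x)\bigr)\, d\mu(x);$$
it suffices to show that this integral does not depend on the choice of such partition.

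Given $\eta$ and $\hat\eta$ as in the statement, I would form the common refinement $\xi = \eta \vee \hat\eta$. Then $\xi$ again satisfies properties (1)--(3): subordination to $V$ is preserved under intersection of atoms lying on a common leaf; $f\xi<\xi$ follows from $f\eta<\eta$ and $f\hat\eta<\hat\eta$; and generation is inherited from $\eta$ (hence from $\hat\eta$ as well). By symmetry, it then suffices to establish $h_\mu(\eta, f) = h_\mu(\xi, f)$. This comparison is the content of a leaf-measure telescoping as in \cite{LY}: since the conditional measures $\mu_x^\eta$ and $\mu_x^\xi$ are normalized restrictions of a common $\sigma$-finite leaf-measure $\mu_x^V$ on $V(x)$, the ratio $\mu_x^\xi\bigl(f^{-1}\xi(x)\bigr) \big/ \mu_x^\eta\bigl(f^{-1}\eta(x)\bigr)$ telescopes against the $f$-invariance of $\mu$, so that after integration only an intrinsic ``leaf Jacobian'' contribution survives, depending on $f$ and $V$ but not on the subordinate partition.

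The identical argument applies to $g$, since Proposition~\ref{huspartitions} guarantees $\eta$ is simultaneously increasing and generating for both $f$ and $g$. For $f \circ g$, one verifies $(f\circ g)\eta = f(g\eta) < f\eta < \eta$, and generation under $f\circ g$ follows from generation under $f$ together with commutativity of $f$ and $g$ (so that $(f\circ g)^{-n}\eta = f^{-n}g^{-n}\eta$ refines $f^{-n}\eta$). The main obstacle is justifying the leaf-measure telescoping in the present non-uniformly hyperbolic, almost-everywhere-defined setting; this is handled precisely as in Hu \cite{hu} and \cite{LY}, since the only properties used are that subordinate atoms are relatively compact pieces of leaves of positive leaf-measure, which is guaranteed by Proposition~\ref{huspartitions}.
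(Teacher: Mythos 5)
Your proposal is correct and takes essentially the same route as the paper, which also does not prove this lemma directly but defers to Ledrappier--Young (and Hu) for the partition-independence of the conditional entropy along the subordinate foliation. The reduction via the common refinement $\xi=\eta\vee\hat\eta$, the verification that $\xi$ again satisfies Proposition~\ref{huspartitions}, and the observations $(f\circ g)\eta<\eta$ and generation under $f\circ g$ are all sound, and the leaf-measure telescoping you invoke is precisely the LY content the paper is citing.
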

Thus, we can call $h_\mu(V,f)=h_{\mu}(\eta,f)$.

This gives as the analogous of Proposition 9.1. in \cite{hu} which gives that entropy is linear.
\begin{proposition}\label{linearityent}
Let $f$, $g$, $\mu$, $V$ and $\eta$ be as above. Then $$h_{\mu}(V,f\circ g)=h_{\mu}(V,f)+h_{\mu}(V,g).$$ 
\end{proposition}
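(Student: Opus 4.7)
The plan is to mimic directly Hu's argument for \cite[Proposition 9.1]{hu}, exploiting the partition $\eta$ supplied by Proposition \ref{huspartitions}. The first step is to check that the single partition $\eta$ is admissible, in the sense of Proposition \ref{huspartitions}, not only for $f$ and $g$ but also for the composition $f\circ g$, so that the preceding lemma allows us to identify $h_\mu(\eta,\cdot)$ with $h_\mu(V,\cdot)$ for all three maps simultaneously. Subordination to $V$ depends only on $V$, so property (1) is automatic. The joint increasing property for $f\circ g$ follows from commutativity: from $\eta < f^{-1}\eta$ and $\eta < g^{-1}\eta$ we get
$$\eta \;<\; g^{-1}\eta \;<\; g^{-1}f^{-1}\eta \;=\; (f\circ g)^{-1}\eta,$$
so $(f\circ g)\eta<\eta$. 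For the generating property, since $g$ and $f$ commute and $g^n\eta<\eta$ for every $n\geq 0$, applying $f^{-n}$ yields $g^n(f^{-n}\eta)<f^{-n}\eta$, hence $f^{-n}\eta<g^{-n}f^{-n}\eta=(f\circ g)^{-n}\eta$. Taking the join,
$$\bigvee_{n\geq 0}(f\circ g)^{-n}\eta \;\supset\; \bigvee_{n\geq 0}f^{-n}\eta,$$
which is the partition into points by property (3) for $f$. The lemma preceding Proposition \ref{linearityent} therefore gives $h_\mu(V,T)=h_\mu(\eta,T)$ for each $T\in\{f,g,f\circ g\}$.

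Next I would evaluate $h_\mu(\eta,f\circ g)$ using the identity $h_\mu(\eta,T)=H(T^{-1}\eta\mid\eta)$, valid for any measure-preserving $T$ with $T\eta<\eta$ (in that case $\eta^+=\bigvee_{n\geq 0}T^n\eta=\eta$, and the definition in Section \ref{lyentropyformula} collapses to the stated form). Writing $(f\circ g)^{-1}=g^{-1}f^{-1}$ and using the standard cocycle identity for conditional entropy together with the $g$-invariance of $\mu$ (which gives $H(g^{-1}\alpha\mid g^{-1}\beta)=H(\alpha\mid\beta)$), one computes
$$H\bigl(g^{-1}f^{-1}\eta\,\bigm|\,\eta\bigr)=H\bigl(g^{-1}f^{-1}\eta\,\bigm|\,g^{-1}\eta\bigr)+H\bigl(g^{-1}\eta\,\bigm|\,\eta\bigr)=H\bigl(f^{-1}\eta\,\bigm|\,\eta\bigr)+H\bigl(g^{-1}\eta\,\bigm|\,\eta\bigr),$$
which translates, via the identification of the previous paragraph, into
$$h_\mu(V,f\circ g)=h_\mu(V,f)+h_\mu(V,g).$$

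The only delicate point is the bookkeeping in the first paragraph: one has to be certain that \emph{the same} partition $\eta$ can be used simultaneously for $f$, $g$ and $f\circ g$, which is what makes the cocycle manipulation in the second paragraph legitimate. Commutativity of $f$ and $g$ and the fact that $\eta$ is built jointly for $f$ and $g$ in Proposition \ref{huspartitions} are what make this possible; once these verifications are made, the additivity of $h_\mu(V,\cdot)$ is just the usual Abramov-type manipulation of conditional entropies.
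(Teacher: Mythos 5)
Your proof is correct and follows essentially the same two-line chain-rule manipulation as the paper's (you write the conditional entropies in the form $H(T^{-1}\eta\mid\eta)$ whereas the paper writes $H(\eta\mid T\eta)$; the two are equal by $\mu$-invariance of $T$, so the calculations coincide term by term). The one step you make explicit that the paper leaves implicit is the verification that the joint increasing partition $\eta$ of Proposition~\ref{huspartitions} is itself increasing and generating for $f\circ g$, so that the preceding lemma legitimizes the identification $h_\mu(V,f\circ g)=h_\mu(\eta,f\circ g)$.
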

Since the proof is in two lines we repeat it here, let us write $fg$ for $f\circ g$. 
\begin{proof}
\begin{eqnarray*}
h_{\mu}(\eta,fg)&=&H(\eta|fg\eta)=H(\eta\vee g\eta|fg\eta)
=H(g\eta|fg\eta)+H(\eta|g\eta\vee fg\eta)\\
&=&H(\eta|f\eta)+H(\eta|g\eta)=h_{\mu}(\eta,f)+h_{\mu}(\eta,g)
\end{eqnarray*}
\end{proof}
\begin{corollary}\label{Hu-extension}If $C$ is the cone where $F$ is still the fastest bundle, then $$\s\to h_{\mu}(V,\as)$$ is linear for $\s\in C$.
\end{corollary}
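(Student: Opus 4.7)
The plan is to reduce Corollary \ref{Hu-extension} to Proposition \ref{linearityent} and then promote the resulting additivity to $\R$-linearity via a regularity argument. Set $\phi(\s) := h_\mu(V, \a(\s))$. The cone $C$ is cut out by finitely many strict linear inequalities of the form $\chi_i(\s) > \chi_j(\s)$, where $\chi_i$ ranges over the Lyapunov exponents inside $F$ and $\chi_j$ over the remaining Lyapunov exponents; in particular $C$ is an open convex cone, hence closed under addition and positive scaling.

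For any $\s, \t \in C$, the commuting diffeomorphisms $\a(\s)$ and $\a(\t)$ both preserve the $\a$-invariant bundle $F$ and have $F$ as their fastest bundle. So Proposition \ref{linearityent} applies with $f = \a(\s)$ and $g = \a(\t)$, yielding
$$\phi(\s + \t) = h_\mu(V, \a(\s) \circ \a(\t)) = h_\mu(V, \a(\s)) + h_\mu(V, \a(\t)) = \phi(\s) + \phi(\t).$$
Thus $\phi$ is additive on $C$.

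To upgrade additivity to $\R$-linearity I would first extend $\phi$ to an additive function $\tilde\phi$ on $\R^k$ by writing any $\mathbf v \in \R^k$ as $\s - \t$ with $\s, \t \in C$ (possible since the open cone $C$ spans $\R^k$) and setting $\tilde\phi(\mathbf v) := \phi(\s) - \phi(\t)$; additivity of $\phi$ on $C$ makes this independent of the chosen decomposition. I would then invoke boundedness of $\phi$ on compact subsets of $C$, which follows from the Ruelle inequality
$$0 \le \phi(\s) \le h_\mu(\a(\s)) \le \sum_i \chi_i^+(\s).$$
The classical fact that an additive function on $\R^k$ that is bounded on a set of positive Lebesgue measure is automatically $\R$-linear then concludes the proof.

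The only step that is not purely formal is this last one: mere additivity yields only $\Q$-linearity, and one must supply some regularity input to preclude Hamel-basis pathologies. The Ruelle bound is the most natural such input here; upper semi-continuity of metric entropy in $\s$ would work equally well. Everything else is a direct invocation of Proposition \ref{linearityent} together with the observation that $C$ is an open convex cone.
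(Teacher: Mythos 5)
Your proof is correct and follows the paper's implicit line: the corollary is stated with no separate argument precisely because additivity of $\phi(\s)=h_\mu(V,\as)$ on the open convex cone $C$ is an immediate application of Proposition~\ref{linearityent}, which you carry out correctly. The one step the paper treats as automatic but you rightly flag is the upgrade from additivity to $\R$-linearity; for $\Z^k$ elements this is purely algebraic (additivity on the lattice points of a full-dimensional cone already determines a $\Z$-linear functional), while for the $\R^k$ elements used in Theorem~\ref{nondifofentropy} one does need a regularity input, and your Ruelle-inequality bound (or equivalently Abramov's formula $h_\mu(\a(t\s))=t\,h_\mu(\as)$, $t>0$) supplies it cleanly.
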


\section{Proof of Theorems \ref{graphargument} and \ref{nondifofentropy}}
\subsection{Holonomy invariance of conditional
measures: A model case}\label{holonomyinvariance}

Before  considering the situation  that appears in  Theorem~\ref{graphargument} we shall discuss another case of holonomy invariance of conditional measures  along stable directions  that is of independent
interest and that has  a similar but simpler proof.

Let $f$ be a diffeomorphism preserving a measurable
 foliation $\F$ with smooth leaves. Assume also
that $Df|T\F$ is an isometry. Let $\mu$ be an $f$-invariant measure 
and  $\mu^{\F}_x$ be the  conditional measure along $\F(x)$. Recall that those measures are defined up to a scalar  multiple; we shall use the normalization
$\mu^{\F}_x(B^{\F}(x,1))=1$,  where $B^{\F}(x,1)$ is the  the ball  in the leaf $\F(x)$ centered
in $x$  of radius $1$.  
Given  a regular point  $x$ and $y\in W^s(x)$
let us call $h_{xy}:\F(x)\to\F(y)$ the holonomy along the stable
manifolds; if it is clear from the context we shall omit the
lower index $xy$.

\begin{proposition}\label{isometrycase}
Let $f$, $\F$ and $\mu$ be as above. Then for $\mu$-a.e. $x$ and
$\mu_x^s$-a.e. $y$ in $W^s(x)$ we have that
$h_*\mu^{\F}_x=\mu^{\F}_y$.
\end{proposition}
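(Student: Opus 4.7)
The plan is to exploit that $f$ acts as a leafwise isometry, together with stable contraction, to transport the proposed equality along orbits, reducing it to a closeness statement at recurrence times inside a Pesin block.

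First I would verify that $f_{*}\mu^{\F}_x=\mu^{\F}_{fx}$ for $\mu$-a.e.\ $x$. Since $Df|T\F$ is an isometry, $f$ carries $B^{\F}(x,1)$ isometrically onto $B^{\F}(fx,1)$; combined with $f$-invariance of $\mu$ and uniqueness of conditional measures up to a scalar, this gives proportionality, and the common normalization $\mu^{\F}_{\cdot}(B^{\F}(\cdot,1))=1$ fixes the scalar to be $1$. Next, because $f$ preserves both $\F$ and the stable foliation, stable holonomy commutes with $f$ in the sense $f\circ h_{xy}=h_{fx,fy}\circ f$. Combining these two observations shows that the proposed identity
\[ (h_{xy})_{*}\mu^{\F}_x = \mu^{\F}_y \]
is invariant under the diagonal $f$-action on pairs $(x,y)$ with $y\in W^s(x)$: it holds at $(x,y)$ if and only if it holds at $(f^n x,f^n y)$ for every (equivalently, some) $n$.

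Next I would choose a Pesin-type block $P$ of positive $\mu$-measure on which (a) the assignment $x\mapsto\mu^{\F}_x$ is weak-$*$ continuous on leafwise neighborhoods of bounded size (via Lusin's theorem applied to a countable dense family of compactly supported leafwise test functions), (b) local stable manifolds have uniform size, and (c) the holonomies $h_{x'y'}$ between close stable partners in $P$ vary continuously, and as $d(x',y')\to 0$ along $W^s_{\mathrm{loc}}$ within $P$ they converge to the identity identification of $\F(x')$ with $\F(y')$ in the $C^0$ topology on leafwise balls of fixed radius. On such a block, $(h_{x'y'})_{*}\mu^{\F}_{x'}$ and $\mu^{\F}_{y'}$ are weak-$*$ close whenever $x'$ and $y'$ are close enough inside $P$. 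Poincar\'e recurrence provides times $n_k\to\infty$ with $f^{n_k}x\in P$; stable contraction together with the uniform stable size in (b) forces $f^{n_k}y\in P$ for large $k$ with $d(f^{n_k}x,f^{n_k}y)\to 0$, so the equality holds in the weak-$*$ limit along this subsequence. Pulling back via the invariance from the first step then yields the equality at $(x,y)$.

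The main obstacle is setting up the Pesin block $P$ so that properties (a), (b) and (c) hold simultaneously, and then carefully tracking normalizations: the pushforward $(h_{x'y'})_{*}\mu^{\F}_{x'}$ assigns unit mass to $h_{x'y'}(B^{\F}(x',1))$, while $\mu^{\F}_{y'}$ assigns unit mass to $B^{\F}(y',1)$, and reconciling these two normalizations in the limit is what actually forces the scalar between the two measures to equal $1$. This requires combining Pesin-chart estimates with Lusin-type approximation of the map $x\mapsto\mu^{\F}_x$, in the same spirit as the constructions underlying the synchronizing time change of Proposition~\ref{timechange}, although the argument here should be simpler because the leafwise action is isometric rather than genuinely hyperbolic.
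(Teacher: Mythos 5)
Your overall strategy is the same as the paper's: transport the proposed identity along the $f$-orbit, recur to a Lusin set where $x\mapsto\mu^{\F}_x$ is continuous, and pass to the limit. However, there is a genuine gap at the crucial step: you claim that Poincar\'e recurrence gives times $n_k$ with $f^{n_k}x\in P$, and that ``stable contraction together with the uniform stable size in (b) forces $f^{n_k}y\in P$ for large $k$.'' This implication is false. A Pesin/Lusin block $P$ is a compact set of positive measure, not an open set; the point $f^{n_k}y$ can be arbitrarily close to $f^{n_k}x\in P$ without lying in $P$, and continuity of $x'\mapsto\mu^{\F}_{x'}$ (or of the holonomies) is only guaranteed \emph{on} $P$, not in a neighborhood of it. So you cannot conclude that $\mu^{\F}_{f^{n_k}y}\to\mu^{\F}_z$, which is exactly what you need.

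The paper identifies this exact difficulty (``the problem is to pick \emph{the same} iterates of $x$ and $y$ in this large set'') and resolves it with a density argument that is absent from your proposal. One introduces the forward Birkhoff average $\widetilde{\mathbf 1}_{K_n}$ of the indicator of the Lusin set $K_n$, sets $R_n=\{\widetilde{\mathbf 1}_{K_n}>1/2\}$, and observes that because the partition into stable manifolds refines the partition into ergodic components, for $\mu$-a.e.\ $x\in R_n$ one has $\mu^s_x$-a.e.\ $y\in W^s(x)$ also in $R_n$. Then both visit sets $L(x)=\{m:f^m x\in K_n\}$ and $L(y)=\{m:f^m y\in K_n\}$ have upper density exceeding $1/2$, hence intersect in an infinite (positive-density) set, and one chooses $n_i\in L(x)\cap L(y)$. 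This \emph{simultaneous} recurrence of both $x$ and $y$ into the Lusin set is the missing ingredient; without it the limiting argument does not close. The rest of your outline (leafwise isometry giving $f_*\mu^{\F}_x=\mu^{\F}_{fx}$, holonomy commuting with $f$, weak-$*$ continuity on the Lusin set, pulling back the limit identity) matches the paper and is fine.
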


\begin{proof}
 If $x$ and $y$ are in the same leaf of $\F$  then
$\mu_x^\F=c_{xy}\mu_y^\F$ for some positive constant $c_{xy}$.
Also, by invariance of $\mu$ and $\F$ and that $f$ restricted to
$\F$-leafs is an isometry we get that $f_*\mu_x^\F=\mu_{f(x)}^\F$.

Consider now $\mu^{\F,1}_x=\mu^{\F}_x|B^\F(x,1)$ the restriction of
$\mu_x^\F$ to $B^\F(x,1)$. By invariance of conditional measures and our choice of normalizations   we have 
$f_*\mu_x^{\F,1}=\mu_{f(x)}^{\F,1}$. 

We will prove  that for $\mu$-a.e.
$x$ and for $\mu_x^s$-a.e. $y\in W^s(x)$
$$h_*\mu^{\F,1}_x=\mu^{\F,1}_y$$ 
Take a sequence $n_i\to\infty$ such that
$f^{n_i}(x)\to z$, and hence  $f^{n_i}(y)\to z$. Since $f$
restricted to the leafs of $\F$ is an isometry we may take a
subsequence such that $f^{n_i}|\F(x)$ converges uniformly on compact
sets to an isometry $g_{xz}:\F(x)\to\F(z)$. Similarly we have that
$f^{n_i}|\F(y)$ converges uniformly on compact sets to an isometry
$g_{yz}:\F(y)\to\F(z)$ and 
$$h=g_{yz}^{-1}\circ g_{xz}$$ since stable manifolds are contracted in the
future. Thus it is sufficient to prove that 
that $(g_{xz})_*\mu^{\F,1}_x=\mu^{\F,1}_z$ and similarly
$(g_{yz})_*\mu^{\F,1}_y=\mu^{\F,1}_z$. But we have that
$f^{n_i}_*\mu_x^{\F,1}=\mu_{f^{n_i}(x)}^{\F,1}$ and that $f^{n_i}\to
g_{xz}$ and $f^{n_i}(x)\to z$. Hence we have 
$f^{n_i}_*\mu_x^{\F,1}\to (g_{xz})_*\mu_x^{\F,1}$ and
$f^{n_i}_*\mu_y^{\F,1}\to (g_{yz})_*\mu_y^{\F,1}$. So we need to
prove that $$\mu_{f^{n_i}(x)}^{\F,1}\to\mu_z^{\F,1}\quad \text{and} \quad 
\mu_{f^{n_i}(y)}^{\F,1}\to\mu_z^{\F,1}.$$

To this end we need to use some kind of continuity of the map
$x\to\mu^{\F,1}_x$. This map is only measurable so we need to do
something to guarantee   some kind of continuity. We can apply Luzin's theorem and obtain 
 continuity on a compact  set of an arbitrary large measure, so we need
to pick the iterates (and  hence $z$) inside this set.  The problem is to pick
{\em the same}  iterates of $x$ and $y$  in this large set and
we will now explain how to achieve that. 

We have that $x\to\mu^{\F,1}_x$ is a measurable map and so we have
by Luzin's theorem an increasing sequence of compact sets $K_n$, $\mu(K_n)\to 1$,   such that the map restricted to $K_n$ is
continuous. Consider $\widetilde{\mathbf 1}_{K_n}$ the forward Birkhoff
average of ${\mathbf 1}_{K_n}$ the characteristic function of $K_n$. Take
$R_n$ the set of points where $\widetilde{\mathbf 1}_{K_n}>1/2$;  $\mu(R_n)\to 1$ since
$\mu(K_n)\to 1$.  
Since the partition into stable manifolds refines the
partition into ergodic components of $f$,  for $\mu$-a.e. point
$x\in R_n$, $\mu^s_x$ a.e. point in $W^s(x)$ is inside $R_n$. Take
one of these typical points $x\in R_n$ and $y\in W^s(x)\cap R_n$.
Let  $L(x)=\{n\geq 0: f^n(x)\in K_n\}$ and similarly $L(y)=\{n\geq
0: f^n(y)\in K_n\}$. We have  from the choice of $R_n$ and since  $x,y\in R^n$
$$\frac{\#( L(x)\cap [0,n])}{n}\to \widetilde\chi_{K_n}(x)>\frac{1}{2}$$
and the same  is true for $y$.  Hence  both sets  $L(x)$ and $L(y)$ have asymptotic  density greater 
than $1/2$  and hence they should intersect in a set of
positive asymptotic  density, in particular  $L(x)\cap L(y)$ is an infinite set. So
we take the sequence $n_i$ inside $L(x)\cap L(y)$ and we get then
that $f^{n_i}(x)$ and $f^{n_i}(y)$ are inside $K_n$ and hence their
limit $z$ is also in $K_n$. Now by continuity of $x\to\mu^{\F,1}_x$
restricted to $K_n$ we get that
$\mu_{f^{n_i}(x)}^{\F,1}\to\mu_z^{\F,1}$ and
$\mu_{f^{n_i}(y)}^{\F,1}\to\mu_z^{\F,1}$.
\end{proof}

\subsection{Proof of Theorem~\ref{graphargument}} Now let us return to our case. We want to prove the same invariance
by holonomy of the conditional measures in a  less uniform but more specialized 
setting.

Let $\a,\, \mu$ and $\chi$  satisfy  the assumptions  of technical
Theorem \ref{tech}. Let us fix a generic singular element
$\t\in\Rk$, i.e. an element  such that $\chi(\t)=0$ but
$\chi_j(\t)\neq 0$ for any other Lyapunov exponent, and an element
$\s$ close to $\t$ such that $\chi(\s)<0$ but it is still the
biggest negative Lyapunov exponent for $\s$. Then we have that
$\w^s_{\at}\subset \w^s_{\as}$ and in fact
$E^s_{\as}=E^s_{\at}\oplus E_\chi$. We have the invariant foliation $\w$ associated to $\chi$ tangent to $E_\chi$ and we
consider the conditional measures $\mu^{\w}$ associated to this foliation that we normalize  in a certain  convenient way. 
Given $x$ and
$y\in\w^s_\at(x)$ we define the holonomy map $h_{xy}:\w(x)\to\w(y)$
by sliding along $\w^s_\at$ manifolds; we omit the  lower index $xy$ if it is understood from the context.

Theorem~\ref{graphargument}  is an immediate corollary of the following holonomy invariance property of conditional measures. 

\begin{proposition}\label{holonomyinvariancenonuniform}
Let $\chi, \t$ and $\mu$ be as above. Then for $\mu$-a.e. $x$ and
$\mu^s_x$-a.e. $y\in \w^s_\at(x)$,  there is a  scalar measurable function $c(x,y)$ such
that $h_*\mu^{\w}_x=c(x,y)\mu^{\w}_y$ where $h$ is holonomy along
$\w^s_\at$.
\end{proposition}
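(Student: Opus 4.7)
The plan is to mirror the proof of Proposition~\ref{isometrycase}, using the synchronizing time change of Proposition~\ref{timechange} to convert $\a$ into an action that is genuinely isometric on the Lyapunov foliation $\w$. Since $\chi(\t)=0$, the time-changed element $\beta(\t)$ satisfies $\|D^E_x\beta(\t)\|_\e=e^{\chi(\t)}=1$, i.e.\ it acts as an isometry on the one-dimensional distribution $E$ in the Lyapunov metric $\langle\cdot,\cdot\rangle_\e$. Normalize the conditional measures on $\w$-leaves by the unit $\e$-Lyapunov ball, $\mu^{\w,1}_x=\mu^\w_x|_{B^\w_\e(x,1)}$; this normalization together with the isometry property gives the key invariance $\beta(\t)_*\mu^{\w,1}_x=\mu^{\w,1}_{\beta(\t)x}$ for $\mu$-a.e.\ $x$, exactly as in the first step of the proof of Proposition~\ref{isometrycase}.

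To translate the $\w^s_\at$-holonomy into the $\beta$-framework, invoke Proposition~\ref{invariantmanifoldtimechange}: for $y\in\w^s_\at(x)$, the point $\tilde y=\a(\phi_x(y)\mathbf w)y$ lies on $\tilde{\w}^s_{\beta(\t)}(x)$, so $\beta(n\t)\tilde y$ and $\beta(n\t)x$ approach each other as $n\to\infty$. The $\a$-holonomy $h_{xy}:\w(x)\to\w(y)$ factors as $\a(-\phi_x(y)\mathbf w)\circ\tilde h_{x\tilde y}$, where $\tilde h_{x\tilde y}:\w(x)\to\w(\tilde y)$ is the sliding along the $\tilde{\w}^s_{\beta(\t)}$-leaf. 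The $\mathbf w$-flow on $\w$-leaves is smooth and, since $\chi(\mathbf w)=1$, rescales the Lyapunov metric on $E$ by $e^{\pm\phi_x(y)}$; this rescaling produces precisely the measurable scalar $c(x,y)$ in the statement, and it suffices to prove $\tilde h_{x\tilde y,*}\mu^{\w,1}_x=\mu^{\w,1}_{\tilde y}$.

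Now repeat the Luzin--Birkhoff argument of Proposition~\ref{isometrycase} with $\beta(\t)$ in place of $f$. Choose a nested family of compact Pesin sets $K_n$ with $\mu(K_n)\to 1$ on which $x\mapsto\mu^{\w,1}_x$ is continuous, and put $R_n=\{\widetilde{\mathbf 1}_{K_n}>1/2\}$ for Birkhoff averages under $\beta(\t)$. Because the partition into $\tilde{\w}^s_{\beta(\t)}$-leaves refines the partition into ergodic components of $\beta(\t)$, for $\mu$-a.e.\ $x\in R_n$ and typical $\tilde y\in\tilde{\w}^s_{\beta(\t)}(x)\cap R_n$ the return sets $L(x)=\{k\in\Z_{\geq 0}:\beta(k\t)x\in K_n\}$ and $L(\tilde y)$ both have asymptotic density $>1/2$, hence intersect in an infinite set. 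Along a subsequence $n_i\in L(x)\cap L(\tilde y)$ one has $\beta(n_i\t)x\to z\in K_n$ and, by contraction along $\tilde{\w}^s_{\beta(\t)}$, also $\beta(n_i\t)\tilde y\to z$. Isometry of $\beta(\t)$ on $\w$ in the Lyapunov metric lets us extract a further subsequence on which $\beta(n_i\t)|_{\w(x)}$ and $\beta(n_i\t)|_{\w(\tilde y)}$ converge on compact sets to isometries $g_{xz},g_{\tilde yz}$ with $\tilde h_{x\tilde y}=g_{\tilde yz}^{-1}\circ g_{xz}$; continuity of $x\mapsto\mu^{\w,1}_x$ on $K_n$ then yields $(g_{xz})_*\mu^{\w,1}_x=\mu^{\w,1}_z=(g_{\tilde yz})_*\mu^{\w,1}_{\tilde y}$ and hence $\tilde h_{x\tilde y,*}\mu^{\w,1}_x=\mu^{\w,1}_{\tilde y}$.

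The main obstacle is the synchronization step, i.e.\ producing a single return sequence $n_i$ valid for both $x$ and $\tilde y$; this is exactly where one exploits that $\tilde{\w}^s_{\beta(\t)}$ refines the $\beta(\t)$-ergodic decomposition, combined with the density-$>1/2$ Birkhoff trick from Proposition~\ref{isometrycase}. One must also take care to choose the Pesin sets $K_n$ so that the Lyapunov norm, the affine parameter on $\w$, the function $\phi_x$, and the conditional density $x\mapsto\mu^{\w,1}_x$ are all simultaneously continuous, and to check that the measurable (non-smooth) nature of $\tilde{\w}^s_{\beta(\t)}$ does not obstruct the measurability of $c(x,y)$ or the extraction of the isometric limits $g_{xz}$, $g_{\tilde yz}$.
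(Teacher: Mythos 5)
Your proposal follows the paper's strategy — synchronizing time change, Luzin compact sets $K_n$, the density-$>1/2$ Birkhoff trick to find a common return sequence, and the factorization of the $\a$-holonomy through the $\tilde\w^s_{\beta(\t)}$-graph — but the central normalization and isometry step fails as stated. You normalize by the unit $\e$-Lyapunov ball and claim $\beta(\t)_*\mu^{\w,1}_x=\mu^{\w,1}_{\beta(\t)x}$ because ``$\beta(\t)$ acts as an isometry on $E$ in the Lyapunov metric.'' But the synchronizing condition $\|D_x^E\a(\g(x,\t))\|_\e=1$ is a statement at the single point $x$; at other points $z\in\w(x)$ the corrector is $\g(z,\t)$, not $\g(x,\t)$, so $\|D_z^E\a(\g(x,\t))\|_\e\neq 1$ in general and $\a(\g(x,\t))$ is \emph{not} a leafwise Lyapunov isometry of $\w(x)$ onto $\w(\a(\g(x,\t))x)$. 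Consequently the Lyapunov unit ball is not carried to the Lyapunov unit ball and the claimed exact invariance of $\mu^{\w,1}_x$ does not follow. A related imprecision: $\beta(n_i\t)$ is a pointwise time change and does not map $\w(x)$ into $\w(\beta(n_i\t)x)$ at all; the object one must iterate on a fixed leaf is $\a(\g(x,n_i\t))|_{\w(x)}$, a fixed group element.

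The paper avoids both problems by normalizing through the $\a$-invariant affine parameters $H_x$ of Proposition~\ref{affineparameters}: set $\mu^{\w,1}_x=\mu^\w_x|_{H_x(-1,1)}$. The identity $\a({\mathbf a}_i)\circ H_x = H_{\a({\mathbf a}_i)x}$ holds precisely because the base-point normalization $\|D_x^E\a({\mathbf a}_i)\|_\e=1$ fixes the linear part of the transition map, and it gives exact invariance $\a({\mathbf a}_i)_*\mu^{\w,1}_x=\mu^{\w,1}_{\a({\mathbf a}_i)x}$ without needing any leafwise metric control. It also supplies the limit identification $\tilde h=H_y\circ H_x^{-1}$ directly from continuity of $x\mapsto H_x$ on $K_n$, replacing your compactness extraction of ``isometric'' limits. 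If you replace the Lyapunov-ball normalization by the affine-parameter normalization and carry $\a(\g(x,n_i\t))$ rather than $\beta(n_i\t)$ along the leaves, the remainder of your outline — the graph factorization, the choice of common returns to $K_n$, and the final scalar $c(x,y)$ coming from $\a$-invariance of $\mu^\w$ modulo constants — is essentially the paper's proof.
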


\begin{remark} Both Propositions \ref{isometrycase} and \ref{holonomyinvariancenonuniform}   assert that  the system of conditional measures, defined affinely,  is holonomy invariant. The difference is that in the former case there is a normalization that makes   normalized conditional measures invariant. 
\end{remark}

\begin{proof}
We will argue as in the proof of Proposition \ref{isometrycase} but
we need to address the problem that   dynamics  of $\a(t)$ along $\w$ is not an
isometry  although the Lyapunov exponent along $\w$ is equal to zero.

By Proposition \ref{affineparameters} there is  a measurable $\a$-invariant  family of
affine parameters $H_x:\R\to \w(x)$. 
We normalize $\mu_x^{\w}$ is such a way that
$\mu_x^{\w}(H_x(-1,1))=1$ and define
$\mu_x^{\w,1}=\mu_x^{\w}|H_x(1-,1))$.

We shall use the time change $\beta$ introduced in Proposition
\ref{timechange}. Using Luzin's Theorem, let $K_n$ be an increasing
sequence of compact sets (which we take also inside Pesin sets for
$\a$)   $\nu(K_n)\to 1$ for the $\beta$-invariant measure $\nu$ (and hence $\mu(K_n)\to 1$) such that on the set 
$K_n$ 
\begin{enumerate}
\item  the time change is continuous, \item  the map $x\to H_x$ is
continuous (with $C^1(\R, M)$
 topology for affine structures), and \item the map $x\to \mu_x^{\w,1}$ is continuous with weak * topology  in measures on $[-1,1]$. 
\end{enumerate}

Take $f=\beta(t)$ and consider $\widetilde{\mathbf 1}_{K_n}$ the forward
Birkhoff average of the characteristic function ${\mathbf 1}_{K_n}$. Let as before  $R_n$ be  the set of points where
$\widetilde{\mathbf 1}_{K_n}>1/2$. Since $\nu(K_n)\to 1$
then $\nu(R_n)\to 1$ (and hence $\mu(R_n)\to 1$). Also, since the partition into $\tilde\w^s_{\bt}$
stable "manifolds" refines the partition into ergodic components
(see the remark after Proposition \ref{invariantmanifoldtimechange})
we have that for $\nu$-a.e. point $x$ in $R_n$, $\nu^s_x$ a.e. point
in $\tilde\w^s_{\bt}(x)$ is inside $R_n$. Take one of these typical
points $x\in R_n$ and $y\in\tilde\w^s_{\bt}(x)\cap R_n$. As in the
proof of Proposition \ref{holonomyinvariance} we can take a sequence
of iterates $n_i$ such that $f^{n_i}(x)$ and $f^{n_i}(y)$ are inside
$K_n$ and hence their limit $z$ is also in $K_n$. Now by continuity
of $x\to\mu^{\w,1}_x$ restricted to $K_n$ we get that
$\mu_{f^{n_i}(x)}^{\w,1}\to\mu_z^{\w,1}$ and
$\mu_{f^{n_i}(y)}^{\w,1}\to\mu_z^{\w,1}$.

On the other hand if we denote ${\mathbf a}_i=\g(x,n_i\t)$ and
${\mathbf b}_i=\g(y,n_i\t)$ we have that $\|D_x^E\a({\mathbf
a}_i)\|_\e=1$ and $\|D_y^E\a({\mathbf b}_i)\|_\e=1$. Hence, using
the affine parameters from  Proposition \ref{affineparameters} we obtain
\begin{eqnarray}\label{affineparametersformula}
\a({\mathbf a}_i)\circ H_x=H_{\a({\mathbf a}_i)(x)}\;\;\;\mbox{
and}\;\;\; \a({\mathbf b}_i)\circ H_y=H_{\a({\mathbf b}_i)(y)}.
\end{eqnarray}
We have that the holonomy $\tilde h:\w(x)\to\w(y)$
 along $\tilde\w_{\bt}^s$ equals
$$\lim_{i\to\infty}(\a({\mathbf b}_i)|\w(y))^{-1}\circ P_i\circ(\a({\mathbf a}_i)|\w(x))$$
where $P_i$ is a sequence of smooth maps from $\w(\a({\mathbf
a}_i(x))$ to $\w(\a({\mathbf b}_i(y))$  converging to the
identity. Using \eqref{affineparametersformula} and  property (2) above we obtain
$$\lim_{i\to\infty}\a({\mathbf a}_i)|\w(x)=\lim_{i\to\infty}H_{\a({\mathbf a}_i)(x)}\circ H_x^{-1}=H_z\circ
H_x^{-1}=:g_{xz}$$ since $\a({\mathbf a}_i)(x)\to z=$. Similarly
$$\lim_{i\to\infty}\a({\mathbf b}_i)|\w(y)=\lim_{i\to\infty}H_{\a({\mathbf b}_i)(y)}\circ H_y^{-1}=H_z\circ
H_y^{-1}=:g_{yz}$$ since $\a({\mathbf b}_i)(y)\to z$ also. So we get
that
$$\tilde h=g_{yz}^{-1}\circ g_{xz}=H_y\circ
H_x^{-1}.$$

Again using \eqref{affineparametersformula} and the definition of
$\mu^{\w,1}$ we get that $\a({\mathbf
a}_i)_*\mu^{\w,1}_x=\mu^{\w,1}_{\a({\mathbf a}_i)(x)}$ and
$\a({\mathbf b}_i)_*\mu^{\w,1}_y=\mu^{\w,1}_{\a({\mathbf b}_i)(y)}$.

So, finally, putting all together we get and sending $i$ to infinity
we get that $(g_{xz})_*\mu^{\w,1}_x=\mu^{\w,1}_z$ and
$(g_{yz})_*\mu^{\w,1}_y=\mu^{\w,1}_z$ which gives that $\tilde
h_*\mu^{\w,1}_x=\mu^{\w,1}_y$.

Now, since $\tilde\w_{\bt}^s(x)$ is a graph over $\w_{\at}^s(x)$ we
get that $\nu^s_x$-a.e. point correspond to $\mu^s_x$-a.e. point and
hence we get that for $\mu$-a.e. point $x$ and for $\mu^s_x$-a.e.
point $y\in\w^s_{\at}(x)$ we get that $\tilde y=\a(\phi_x(y)\mathbf
w)(y)$ is a typical point for $\nu_x^s$ in $\tilde\w_{\bt}(x)$ and
hence the holonomy $\tilde h:\w(x)\to\w(\tilde y)$ makes $\tilde
h_*\mu^{\w,1}_x=\mu^{\w,1}_{\tilde y}$. So the proof finishes since
we have that $h:\w(x)\to\w(y)$ equals $h=(\a(\phi_x(y)\mathbf
w))^{-1}\circ\tilde h$ and w have that $\mu^{\w}$ is $\a$ invariant
modulo multiplication by a constant.
\end{proof}

\subsection{Proof  of Theorem \ref{nondifofentropy}}
Take $\t$ and $\s$ as in Theorem \ref{graphargument}. 
Take now a neighborhood of $\t$ such that positive Lyapunov exponents other 
than $\chi$ in this neighborhood are all bigger than $\chi$.  For $\s$ in this 
neighborhood, we call the strong unstable foliation associated to the positive 
Lyapunov exponents different from $\chi$, $V^{u-1}$ that does not depend on 
$\s$. Pick $\s$ in this neighborhood and observe that if $\s$ is on one side of 
$\ker\chi$, where  $\chi(\s)>0$, (denote this side $L^+$) there is only one more 
positive Lyapunov exponent, i.e. $\chi$, and no new exponent appears to the
other side $L^-$, where $\chi(\s)<0$. So, for $\s\in L^-\cup\ker\chi$, 
$V^{u-1}=\w^u_{\as}$ and for $\s\in L^+$, $V^{u-1}\subsetneq \w^u_{\as}$.

By Corollary~\ref{Hu-extension} we see that the map $\s\to h(\xi_{u-1},\as)$ is linear  in this neighborhood.

On the other hand, by Ledrappier-Young entropy formula we have that for 
$\s\in L^+$, $$h_{\mu}(\as)=h_{\mu}(\xi_u,\as)=h(\xi_{u-1},\as)+\gamma_u\chi(\s)$$
where $\xi_u$ is any partition subordinated to $\w^u_{\as}$ and $\gamma_u$
does not depend on $\s$  by its definition and the assumption on $\s$, see the
definition of $\gamma_u$ in subsection \ref{lyentropyformula}.

Finally, for $\s\in L^-\cup\ker\chi$, $\s$ close to $\t$ we have that
$$h_{\mu}(\as)=h(\xi_{u-1},\as).$$ 
Hence we have that on $L^+$, $h_{\mu}(\as)$ is linear and on $L^-$, $h_{\mu}(\as)$ 
is also linear. So, in order that this $h_{\mu}$ be differentiable its is necessary and 
sufficient that  this two linear maps coincide. But since $\s\to h(\xi_{u-1},\as)$ is linear 
in the whole neighborhood, this is the same as asking that $\gamma_u=0$. Hence, 
$h_\mu$ is differentiable at $\t$ if and only if $\gamma_u=0$.

Now, if conditional measures along $\w$  are atomic, then we have by Theorem
\ref{graphargument} that conditional measure along $V^u$ is
supported in $V^{u-1}$ and hence $\delta^u=\delta^{u-1}$ which gives
$\gamma^u=0$ and hence $h_\mu$ is differentiable at $\t$. \qed

\section{Proof of Theorems \ref{stronglysimple} and \ref{general}}
\subsection{Proof of Theorem~\ref{stronglysimple}}
By Theorem \ref{tech} we know that conditional measures along
Lyapunov directions are either Lebesgue or atomic. We will show that
if all conditionals are atomic, then conditional measures along
stable manifolds of $\at$ are atomic   and hence  $h_\mu(\t)=0$.

Observe that there without  loss of generality we may  assume $\t$  to be  a
regular element since  for every $\t$ one can find a regular element
$\t'$ whose stable foliation contains that of $\t$.  Hence if $h_\mu(\t)>0$ then  $h_\mu(\t')>0$.

We shall build a sequence of nested sub-foliations
$$\vt_0\supset\vt_1\supset\dots\supset\vt_n$$
where $\w^s_{\at}=\vt_0$, $\vt_n(x)=\{x\}$ and each $\vt_i$ is
either equal to $\vt_{i-1}$ or has one less dimension for a.e. $x$.
We shall prove also that conditional measures on the leaves  $\vt_i$ are
supported by single leaves of  $\vt_{i+1}$ and the theorem will follow.

Since $\t$ is regular it belongs to a Weyl chamber that we denote by $C_0$.  Let $\gamma$ be
a curve which begins at $\t$,  passes through every Lyapunov hyperplane and  crosses
each Lyapunov hyperplane only once at a point that does not lie on any other Lyapunov hyperplane. An examples of such a curve is the half-circle  in
a two-dimensional plane  through $\t$ that is  in general position, i.e. intersects all Lyapunov hyperplanes along different lines.

Let us number $C_0, C_1,\dots
C_n$ the Weyl chambers and $\chi_1,\dots,\chi_n$ the Lyapunov
exponents, in the order they appear.

The stable foliation does not change within a  Weyl chamber so
we shall denote $\w^s_{C_i}$ the stable foliation associated to this
Weyl chamber, also the sign of a Lyapunov exponent does not change
so we may denote this sign by  $\chi_j(C_i)$. So, we define
$\vt_0=\w^s_{C_0}$ and $\vt_i=\vt_{i-1}\cap\w^s_{C_i}$. Clearly
$\vt_i\subset\vt_{i-1}$ and $\vt_i$ is a nice foliation since it is
an intersection of stable foliations.

When passing from $C_{i-1}$ to $C_i$ $\chi_i$ is  the only Lyapunov exponent
that changes sign. So, if $\chi_i(C_i)<0$ then
$\chi_i(C_{i-1})>0$ so that $\w^s_{C_i}\supset\w^s_{C_{i-1}}$ and
hence $\vt_i=\vt_{i-1}$. On the other hand, if $\chi_i(C_i)>0$ then
$\chi_i(C_{i-1})<0$ and hence $\w^s_{C_i}\subsetneq\w^s_{C_{i-1}}$
and in this case $\vt_i\subsetneq\vt_{i-1}$, in fact $\w^i$ in no
more in $\vt_i$, i.e. $\w^i(x)\cap\vt_i(x)=\{x\}$. Moreover, if we
take an element inside $C_{i-1}$ but close to $\ker\chi_i$ we have
that $\w^i$ is the slowest direction in $\w^s_{C_{i-1}}$ while
$\vt_i$ is inside the fast direction in $\w^s_{C_{i-1}}$ (which is
exactly $\w^s_{C_i}$).

Let us fix measurable partitions $\eta_i$ subordinated to $\w^s_{C_i}$
such that  elements are open subsets of the leaves  $\mod 0$, i.e.  the conditional measures of the boundaries are equal to zero.

We shall chose  those partitions  in such a way that if
$\w^s_{C_i}\supset\w^s_{C_{i-1}}$ then $\eta_i<\eta_{i-1}$ and if
$\w^s_{C_i}\subset\w^s_{C_{i-1}}$ then $\eta_i>\eta_{i-1}$. Let
$\xi_0=\eta_0$ and define inductively the measurable partitions
$\xi_i=\xi_{i-1}\vee\eta_i$. It is sufficient  to prove that
$\mu_{\xi_0}^x(\xi_n(x))>0$ for a.e. $x$ since by construction
$\xi_n=\e$. To this end we shall argue inductively and prove that
$\mu_{\xi_{i-1}}^x(\xi_i(x))>0$ for a.e. $x$. Once we know this, we
have that for any measurable set $A$
$$\mu_{\xi_i}^x(A)=\frac{\mu_{\xi_{i-1}}^x(A\cap\xi_i(x))}{\mu_{\xi_{i-1}}^x(\xi_i(x))},$$
and hence
$$\mu_{\xi_{i-1}}^x(\xi_n(x))=\mu_{\xi_i}^x(\xi_n(x))\mu_{\xi_{i-1}}^x(\xi_i(x)),$$
which gives
$$\mu_{\xi_0}^x(\xi_n(x))=\prod_{i=1}^n\mu_{\xi_{i-1}}^x(\xi_i(x))>0.$$

When $\w^s_{C_i}\supset\w^s_{C_{i-1}}$ we have that
$\xi_i=\xi_{i-1}$ and hence $\mu_{\xi_{i-1}}^x(\xi_i(x))>0$
trivially. So, let us assume that
$\w^s_{C_i}\subsetneq\w^s_{C_{i-1}}$ and hence $\eta_i>\eta_{i-1}$.
Since conditionals along $\w^i$ are atomic, by
Theorem \ref{graphargument}, we conclude that
$\mu_{\eta_{i-1}}^x(\eta_i(x))>0$ for a.e $x$.

Finally, we prove that $\mu_{\xi_{i-1}}^x(\xi_i(x))>0$ for a.e. $x$
using the following lemma with $X=\eta_{i-1}(x)$, $\eta=\eta_i$,
$\xi_1=\xi_{i-1}$ and $\xi_2=\xi_i$.
\begin{lemma}
Let $\xi_1, \xi_2$ and $\eta$ be three measurable partitions of a
Lebesgue space $X$. Let us assume that $\xi_2=\xi_1\vee\eta$.
Let $\mu$ be a measure in $X$. If there is $x$ such that
$\mu(\eta(x))>0$, then
$\mu_{\xi_1}^y(\xi_2(y))=\mu_{\xi_1}^y(\eta(x))$ and
$\mu_{\xi_1}^y(\eta(x))>0$ for $\mu$ a.e. $y$ in $\eta(x)$ and hence
$\mu_{\xi_1}^y(\xi_2(y))>0$ for a.e. $y\in\eta(x)$.
\end{lemma}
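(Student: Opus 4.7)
The plan is to reduce everything to two well-known facts about conditional measures: first, $\mu^y_{\xi_1}$ is supported on $\xi_1(y)$ and second, for any measurable set $A$ one has $\mu(A)=\int \mu^y_{\xi_1}(A)\,d\mu(y)$, where the integrand is $\xi_1$-measurable, i.e.\ constant on atoms of $\xi_1$.

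First I would unpack the formula $\xi_2=\xi_1\vee\eta$: for any $y$, the atom $\xi_2(y)$ equals $\xi_1(y)\cap\eta(y)$. Fix the point $x$ given in the hypothesis and set $A=\eta(x)$. If $y\in A$ then $\eta(y)=\eta(x)=A$, hence $\xi_2(y)=\xi_1(y)\cap A$. Since $\mu^y_{\xi_1}$ is supported on $\xi_1(y)$, we get
\[
\mu^y_{\xi_1}(\xi_2(y))=\mu^y_{\xi_1}(\xi_1(y)\cap A)=\mu^y_{\xi_1}(A),
\]
which is the first stated equality, valid for every $y\in A$.

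Next I would prove that $\mu^y_{\xi_1}(A)>0$ for $\mu$-a.e.\ $y\in A$. Consider the function $f(y)=\mu^y_{\xi_1}(A)$, which is $\xi_1$-measurable, hence the set $N=\{y:f(y)=0\}$ is a union of atoms of $\xi_1$. Because $\mu^y_{\xi_1}$ is supported on $\xi_1(y)\subset N$ when $y\in N$, we have $\mu^y_{\xi_1}(A\cap N)\le \mu^y_{\xi_1}(A)=f(y)=0$ for $y\in N$, while for $y\notin N$ the measure $\mu^y_{\xi_1}$ gives zero mass to $N$. The disintegration identity then gives
\[
\mu(A\cap N)=\int \mu^y_{\xi_1}(A\cap N)\,d\mu(y)=0.
\]
Combined with the hypothesis $\mu(A)>0$, this shows that for $\mu$-a.e.\ $y\in A$ one has $f(y)=\mu^y_{\xi_1}(A)>0$, and by the first step $\mu^y_{\xi_1}(\xi_2(y))=\mu^y_{\xi_1}(A)>0$ for such $y$.

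There is no serious obstacle here: the argument is purely measure-theoretic and needs only the standard disintegration theorem for measurable partitions of a Lebesgue space, together with the observation that $\{f=0\}$ is $\xi_1$-saturated. The only thing to watch is that $\mu^y_{\xi_1}$ might not be defined on a null set of points, but this null set can be absorbed into the exceptional set of the conclusion.
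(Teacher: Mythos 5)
Your proof is correct and follows essentially the same route as the paper's: the first equality from $\xi_2(y)=\xi_1(y)\cap\eta(y)$ together with $\eta(y)=\eta(x)$ on $\eta(x)$, and the positivity claim by observing that $\{y:\mu^y_{\xi_1}(\eta(x))=0\}$ is $\xi_1$-saturated and splitting the disintegration integral accordingly (your set $N$ is the paper's set $D$, and your $A\cap N$ is the paper's $B$).
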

\begin{proof}
The first equality is trivial since $\xi_2(y)=\xi_1 (y\cap\eta(y))$
and $\eta(x)=\eta(y)$ for $y\in\eta(x)$. For the inequality, let
$D=\{y\in X:\mu_{\xi_1}^y(\eta(x))=0\}$. Observe that $D$ is
$\xi_1$-saturated. Let $B=D\cap\eta(x)$. We want to show that
$\mu(B)=0$. We have that
$$\mu(B)=\int\mu_{\xi_1}^y(B)d\mu=\int_D\mu_{\xi_1}^y(B)d\mu+\int_{D^c}\mu_{\xi_1}^y(B)d\mu.$$
The first integral in the right-hand side is $0$ since $B\subset \eta(x)$ and
$\mu_{\xi_1}^y(\eta(x))=0\}$ for $y\in D$. The second is zero since
for $y\in D^c$ we have that $\xi_1(y)\subset D^c$ and hence, since
$B\subset D$ we get that $\xi_1(y)\cap D=\emptyset$
\end{proof}

\subsection{Proof of Theorem~\ref{general}}
Theorem~\ref{general} is an immediate corollary of Theorem~\ref{nondifofentropy} and   the following general criterion of absolute continuity.

\begin{theorem}\label{absolutecontinuous}
Let $f:M\to M$ be a $\Ce$ diffeomorphism preserving an ergodic
measure $\mu$. Let $TM=E^u\oplus E^c\oplus E^s$ be the Oseledets
splitting associated to $\mu$. Let us assume that:
\begin{enumerate}
\item $E^c$ is tangent to a smooth foliation $\Or$, that $Df|E^c$ is
an isometry w.r.t. to the standard metric in $M$ and that
conditional measures along $\Or$ are Lebesgue measure,
\item $E^u=E_1\oplus\dots\oplus E_u$, $E^s=E_s\oplus\dots\oplus E_r$,
where $\chi_i<\chi_j$ if $i<j$,
\item each $E_i$ is tangent to an absolutely continuous Lyapunov foliation
$\w^i$ and conditional measures along $\w^i$ are absolutely
continuous w.r.t. Lebesgue for a.e. point.
\end{enumerate}
Then $\mu$ is absolutely continuous w.r.t. Lebesgue.
\end{theorem}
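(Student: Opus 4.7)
The plan is to bootstrap the hypothesized absolute continuity of the conditionals of $\mu$ along each individual Lyapunov foliation $\w^i$ to absolute continuity of $\mu$ on $M$. This proceeds in three steps: upgrade to absolute continuity of conditionals along the whole unstable manifold, do the same on the stable side by passing to $f^{-1}$, and then glue with the smooth center foliation $\Or$.

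For the first step I would argue by induction on the length of the Ledrappier--Young flag, showing that the conditional of $\mu$ along each leaf of $V^i$ is absolutely continuous with respect to the induced Lebesgue measure. The base case $V^1=\w^i$ (for the fastest Lyapunov direction) is hypothesis (3). For the inductive step, within a leaf of $V^i$ consider the two complementary sub-foliations: the restriction of the Lyapunov foliation $\w^i$ tangent to $E_i$, and the sub-flag $V^{i-1}$ tangent to the remaining directions. Both are absolutely continuous — the former by (3), the latter as an intersection of stable manifolds of suitably chosen elements, which is classical Pesin theory. The conditional of $\mu$ along $\w^i$ is absolutely continuous by (3), and along $V^{i-1}$ by the inductive hypothesis. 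The Fubini-type principle — if two complementary absolutely continuous foliations both carry absolutely continuous conditionals of an ambient measure, then the ambient measure is absolutely continuous — then gives absolute continuity of the conditional on $V^i$. At $i=u$ this yields absolute continuity of conditionals of $\mu$ on $W^u$; applying the same argument to $f^{-1}$ gives the analogous statement on $W^s$.

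It remains to glue the three pieces. Near a typical point of a Pesin set $M$ has a local product structure $W^u\times\Or\times W^s$. The center foliation $\Or$ is smooth with Lebesgue conditionals by (1); the unstable and stable foliations are absolutely continuous by Pesin theory; and the conditionals of $\mu$ along each of the three factor foliations are absolutely continuous with respect to Lebesgue by the previous step together with (1). One more application of the Fubini principle in coordinates adapted to the product structure yields absolute continuity of $\mu$.

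The main obstacle is the Fubini/holonomy lemma in the non-uniformly hyperbolic setting. All of the foliations involved are only defined almost everywhere, with regularity controlled only on Pesin sets of arbitrarily large measure, and the local product structure is continuous only in this measurable sense. The argument therefore has to be carried out on Pesin sets where the Oseledets splitting is continuous and holonomies are uniformly H\"older. This is by now a standard tool in the Barreira--Pesin framework, and rather than reproduce the details I would cite the appropriate version.
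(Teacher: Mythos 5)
Your proposal follows essentially the same route as the paper: induct on the flag $V_1\subset V_2\subset\dots\subset V_u=W^u$, at each step combine the hypothesized absolute continuity of the conditional along $\w^i$ with the inductive hypothesis on $V_{i-1}$ via a Fubini/saturation lemma for absolutely continuous foliations, then repeat for $f^{-1}$ and glue with $\Or$. The paper proves the Fubini-type step in place by a direct argument (saturate a Lebesgue-null set by $V_1$-leaves and derive a contradiction from $\mu^z_{\w^2}\sim\mathrm{Leb}$ together with absolute continuity of the $V_1$-holonomy), which is exactly the principle you invoke in black-box form.
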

\begin{proof}
The proof reduces to see that conditional measure along stables and unstables are absolutely 
continuous. We shall argue by induction on the flag tangent to $E_1, E_1\oplus E_2,  \dots,
E_1\oplus\dots\oplus E_u=E^u$. So, let us call $V_i$ the "foliation" tangent to $E_1\oplus\dots
\oplus E_i$. That conditional measures along $V_1=\w^1$ are absolutely continuous is by 
assumption. Let us see that conditionals along $V_2$ is absolutely continuous and then the 
general step will follows as well. 

Let $R$ be the set of regular points. Take a regular point $x$ and 
a zero Lebesgue measure set $A\subset V_2(x)$. We want to see that $\mu_{V_2}^x(A)=0$ also. 
Taking $A\cap R$ we have that $\mu_{V_2}^x(A)=\mu_{V_2}^x(A\cap R)$ and $Leb_{V_2(x)}(A
\cap R)=0$. So, we may assume without loss of generality that $A$ consists of regular points (indeed 
arguing similarly we may also assume $A$ is inside some Pesin set if necessary). Now, since the 
foliation $\w^1=V_1$ is absolutely continuous and indeed it is also absolutely continuous when 
restricted top $V_2(x)$, we may saturate the set $A$ by $V_1$ leafs and get a set of $0$ 
$Leb_{V_2(x)}$-measure which is $V_1$-saturated and which contains $A$. Let us call this set by 
$B$ and let us see that $\mu_{V_2}^x(B)=0$.

Now, if $\mu_{V_2}^x(B)>0$ there should be a regular point $z\in V_2(x)$ such that 
$\mu^z_{\w^2}(B)>0$. But since $\mu^z_{\w^2}$ is equivalent to Lebesgue measure this will imply that $Leb_{\w^2(z)}(B)>0$ and again, absolute continuity of $V_1$ and the fact that $B$ is $V_1$ saturated will imply that $Leb_{V_2(x)}(B)>0$ which is a contradiction
.

\end{proof}

\section{Proof of Theorem \ref{toralcase} }\label{proofnil}
\subsection{General facts about entropy}\label{sbs:entropy}
We shall make use of the following standard facts.  Given measurable
partitions $\xi$, $\eta$ and $\zeta$ we have

\begin{enumerate}
\item $H(\xi\vee\eta|\zeta)=H(\xi|\zeta)+H(\eta|\zeta\vee\xi)$
\item If $\xi>\zeta$ then $H(\xi|\eta)\geq H(\zeta|\eta)$ and $H(\eta|\xi)\leq H(\eta|\zeta)$
\item If $\xi_n\uparrow\xi$ then $H(\xi_n|\eta)\uparrow H(\xi|\eta)$
\item If $\xi_n\downarrow\xi$ and $H(\xi_1|\eta)<\infty$ then $H(\xi_n|\eta)\downarrow H(\xi|\eta)$
\item If $\eta_n\uparrow\eta$ and $H(\xi|\eta_1)<\infty$ then $H(\xi|\eta_n)\downarrow H(\xi|\eta)$
\item If $\eta_n\downarrow\eta$ then $H(\xi|\eta_n)\uparrow H(\xi|\eta)$
\item For every $n\in\Z$, $h(\xi\vee\eta,T)= h(\xi\vee T^n\eta,T)$
\end{enumerate}

Given a measurable partition $\eta$ we denote
$\eta_T=\bigvee_{n\in\Z}T^n\eta$ and $\eta^+=\bigvee_{n=0}^\infty T^n\eta$.

\begin{lemma}\label{Newentropy}
Given two measurable partitions $\xi$ and $\eta$,
$$h(\xi\vee\eta,T)\leq h(\eta,T)+h(\xi\vee\eta_T,T).$$
\end{lemma}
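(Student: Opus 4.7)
The plan is to use shift-invariance (property (7)) to replace $\eta$ by $T^{-N}\eta$, and then apply the chain rule (1) to split the resulting entropy into a piece uniformly bounded by $h(\eta,T)$ and a piece converging to $h(\xi\vee\eta_T,T)$ as $N\to\infty$. The key observation that makes $h(\xi\vee\eta_T,T)$ a clean object is that since $\eta_T$ is $T$-invariant, the definition of entropy collapses to
$$h(\xi\vee\eta_T,T)=H\bigl(T^{-1}\xi\bigm|\xi^+\vee\eta_T\bigr),$$
the $\eta_T$ piece being absorbed into the conditioning.

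For each $N\ge 0$, property (7) gives $h(\xi\vee\eta,T)=h(\xi\vee T^{-N}\eta,T)$. Using $(T^{-N}\eta)^+=\bigvee_{k\ge -N}T^k\eta$ and applying the chain rule (1), this equals $A_N+B_N$ where
$$A_N=H\Bigl(T^{-(N+1)}\eta\Bigm|\xi^+\vee\bigvee_{k\ge -N}T^k\eta\Bigr),\quad B_N=H\Bigl(T^{-1}\xi\Bigm|\xi^+\vee\bigvee_{k\ge -(N+1)}T^k\eta\Bigr).$$

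By $T$-invariance of $H$, a change of variables gives $A_N=H\bigl(\eta\bigm|T^{N+1}\xi^+\vee\bigvee_{j\ge 1}T^j\eta\bigr)$; conditioning on a finer partition only decreases entropy (property (2)), so $A_N\le H\bigl(\eta\bigm|\bigvee_{j\ge 1}T^j\eta\bigr)=H(T^{-1}\eta\mid\eta^+)=h(\eta,T)$, uniformly in $N$. Since the partitions $\xi^+\vee\bigvee_{k\ge -(N+1)}T^k\eta$ increase to $\xi^+\vee\eta_T$, property (5) yields $B_N\downarrow h(\xi\vee\eta_T,T)$. Combining these bounds gives $h(\xi\vee\eta,T)\le h(\eta,T)+B_N$ for every $N$, and letting $N\to\infty$ finishes the proof.

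The only subtle point is the finiteness hypothesis in property (5); this is automatic in the intended applications, where the partitions are subordinated to invariant foliations and have finite entropy rate. In general one can reduce to this case by first truncating to finite partitions, proving the inequality there, and then passing to the limit using the monotone-convergence properties of entropy.
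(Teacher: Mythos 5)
Your proof is correct and follows essentially the same route as the paper: replace $\eta$ by $T^{-N}\eta$ via shift-invariance, apply the chain rule, bound the first summand by $h(\eta,T)$ by translating and coarsening the conditioning, and let $N\to\infty$ in the second summand via monotone convergence of conditional entropy. Your explicit remark on the finiteness hypothesis needed for property (5) is a point the paper passes over silently, but otherwise the decompositions and estimates coincide step for step.
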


\begin{proof}
First of all we have that
\begin{eqnarray*}
h(\xi\vee\eta,T)&=&H(T^{-1}\xi\vee T^{-1}\eta|\xi^+\vee\eta^+)\\
&=&H(T^{-1}\eta|\xi^+\vee\eta^+)+H(T^{-1}\xi|\xi^+\vee
T^{-1}\eta^+)
\end{eqnarray*}
Then we have that
\begin{eqnarray*}
h(\xi\vee T^{-n}\eta,T)&=&H(T^{-1-n}\eta|\xi^+\vee
T^{-n}\eta^+)+H(T^{-1}\xi|\xi^+\vee T^{-1-n}\eta^+)\\
&=& H(T^{-1}\eta|T^n\xi^+\vee
\eta^+)+H(T^{-1}\xi|\xi^+\vee T^{-1-n}\eta^+)\\
&\leq& H(T^{-1}\eta|\eta^+)+H(T^{-1}\xi|\xi^+\vee
T^{-1-n}\eta^+)
\end{eqnarray*}
On one hand, the last term   is bounded by
$H(T^{-1}\xi|\xi^+)=h(\xi,T)$ and also $\xi\vee
T^{-1-n}\eta^+\uparrow \xi\vee\eta_T$ hence this last term
$\downarrow H(T^{-1}\xi|\xi^+\vee\eta_T)$. On the other hand,
since $\eta_T$ is a $T$-invariant partition we get easily that
$h(\xi\vee\eta_T,T)=H(T^{-1}\xi|\xi^+\vee \eta_T)$ and
hence the inequality.
\end{proof}

Finally we have the standard formula: given an invariant partition
$\zeta$ (i.e. $p^{-1}\zeta=\zeta$) we have that

$$h(T)=h(T|\zeta)+\sup_{\xi}h(\xi\vee\zeta,T)$$

where $h(T|\zeta)=\sup_{\eta<\zeta}h(\eta,T).$

\begin{proposition}\label{preservationofentropy}
Let us consider $T:(X,\mu)\to (X,\mu)$, $S:(Y,\lambda)\to
(Y,\lambda)$ and assume that $S$ is a factor of $T$, via a measure preserving map
 $p:(X,\mu)\to(Y,\lambda)$. Let $\xi$ be a full entropy
partition for $T$ and $\eta$ a partition such that $\eta_S=\e=$
partition into points and $p^{-1}\eta<\xi$. Then
 $\eta$ is a full entropy partition for $S$.
\end{proposition}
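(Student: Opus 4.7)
The plan is to apply Lemma~\ref{Newentropy} to the pair of partitions $(\xi,\,p^{-1}\eta)$ on $X$ and then translate the resulting inequality back to the factor $Y$ via the measure-preserving factor map $p$. First I would set $\zeta:=p^{-1}\e$, the partition of $X$ into $p$-fibres; it is $T$-invariant since $p\circ T=S\circ p$. The key computation is
$$(p^{-1}\eta)_T=\bigvee_{n\in\Z}T^np^{-1}\eta=p^{-1}\eta_S=p^{-1}\e=\zeta,$$
so Lemma~\ref{Newentropy}, applied with its ``$\xi$'' equal to $\xi$ and its ``$\eta$'' equal to $p^{-1}\eta$, yields
$$h(\xi\vee p^{-1}\eta,T)\;\le\;h(p^{-1}\eta,T)+h(\xi\vee\zeta,T).$$
By the hypothesis $p^{-1}\eta<\xi$, the left-hand side collapses to $h(\xi,T)=h(T)$ (using the full-entropy assumption on $\xi$), while $h(p^{-1}\eta,T)=h(\eta,S)$ because $p$ is a measure-preserving factor map.

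It then suffices to bound $h(\xi\vee\zeta,T)\le h(T)-h(S)$. Using $T^{-1}\zeta=\zeta$ and $\zeta^+=\zeta$, together with property~(1) of Section~\ref{sbs:entropy}, a short manipulation gives $h(\xi\vee\zeta,T)=H(T^{-1}\xi\mid\xi^+\vee\zeta)$. The Abramov--Rokhlin-type decomposition recalled just before the proposition reads
$$h(T)=h(T\mid\zeta)+\sup_{\xi'}h(\xi'\vee\zeta,T),$$
and since every partition coarser than $\zeta$ is a pullback of a partition on $Y$, $h(T\mid\zeta)=\sup_{\eta'<\zeta}h(\eta',T)=h(S)$. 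Hence the supremum on the right equals $h(T)-h(S)$, and in particular $h(\xi\vee\zeta,T)\le h(T)-h(S)$. Feeding this back into the inequality from the Lemma gives $h(T)\le h(\eta,S)+h(T)-h(S)$, and cancelling $h(T)$ yields $h(\eta,S)\ge h(S)$; the reverse inequality is automatic.

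The main technical points — and the only places where anything is really being used — are the identity $(p^{-1}\eta)_T=\zeta$ (which encodes the hypothesis $\eta_S=\e$) and the computation $h(\xi\vee\zeta,T)=H(T^{-1}\xi\mid\xi^+\vee\zeta)$ (which uses $T$-invariance of $\zeta$). Once these are in hand, the proposition is a one-line algebraic consequence of Lemma~\ref{Newentropy} combined with the Abramov--Rokhlin decomposition. A harmless standing assumption $h(T)<\infty$ is required in order to cancel $h(T)$ on the two sides, but this is available in all the Pesin-theoretic situations where the proposition will be applied.
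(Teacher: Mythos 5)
Your proof is correct and follows the same route as the paper: set $\zeta=p^{-1}\e$, use the identity $(p^{-1}\eta)_T=\zeta$ together with $p^{-1}\eta<\xi$ to rewrite $h(T)=h(\xi\vee p^{-1}\eta,T)$, apply Lemma~\ref{Newentropy} to split this as $h(\eta,S)+h(\xi\vee\zeta,T)$, and compare with the Abramov--Rokhlin decomposition $h(T)=h(S)+\sup_\gamma h(\gamma\vee\zeta,T)$. You are slightly more explicit than the paper in spelling out the cancellation and the needed finiteness of the fiber entropy; otherwise the argument coincides.
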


\begin{proof}
Let us call $\zeta$ the partition into pre-images of $p$. We have
 on one hand
\begin{eqnarray*}
  h(T)=h(T|\zeta)+\sup_{\gamma}h(\gamma\vee\zeta,T)=h(S)+\sup_{\gamma}h(\gamma\vee\zeta,T)
\end{eqnarray*}
and on the other hand since
$(p^{-1}\eta)_T=p^{-1}(\eta_S)=p^{-1}\e=\zeta$:
\begin{eqnarray*}
  h(T)&=&h(\xi,T)=h(\xi\vee p^{-1}\eta,T)\leq
  h(p^{-1}\eta,T)+h(\xi\vee(p^{-1}\eta)_T,T)\\
  &=&h(\eta,S)+h(\xi\vee\zeta,T)
\end{eqnarray*}
where the inequality follows form Lemma~\ref{Newentropy}.
Thus  $h(\eta,S)=h(S)$  i.e. $\eta$ is a
full entropy partition. Observe also that $\xi$ is also a full
entropy partition for the fiber-entropy 
$\sup_{\gamma}h(\gamma\vee\zeta,T)$.
\end{proof}

\subsection{Matching of Lyapunov half-spaces}\label{lyapexp}
Here we assume $\a$ and $\ao$ are $\Z^k$ actions on an infranilmanifold $M$ as in Theorem
\ref{toralcase}. 

Unstable foliation $W^u_{\alpha_0(\m)}$ for an  element of   the  algebraic action $\ao$  is right
homogeneous.  Lyapunov foliations $W^i$  that are one-dimensional 
under our assumptions are intersections of unstable foliations of different elements and are projections of cosets of one-parameter subgroups in $N$.

  Let $p:M\to M$ be the
semiconjugacy between these actions and let $\mu$ be an ergodic large  measure
invariant by $\alpha$.
We want to prove that Weyl
chambers for both actions match. We do this in two steps. First we
prove a general result which requires no assumption  on the
linear action:

\begin{lemma}\label{preservationofweylchambers}
If $L$ is a Lyapunov hyperplane for $\alpha_0$ then $L$ is also a
Lyapunov hyperplane for $\alpha$ and Lyapunov half-spaces match.
\end{lemma}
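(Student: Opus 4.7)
The plan is to identify the metric entropy function $\psi(\mathbf{t}):=h_\mu(\alpha(\mathbf{t}))$ of $\alpha$ with respect to $\mu$ with the piecewise linear entropy function $\psi_0(\mathbf{t}):=h_{\mathrm{Haar}}(\alpha_0(\mathbf{t}))=\sum_i \max(\chi_i^{(0)}(\mathbf{t}),0)$ of the algebraic action, and then to read off the Lyapunov half-space data from its non-differentiability structure, using Corollary~\ref{Hu-extension}.

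The lower bound $\psi\ge\psi_0$ is immediate: since $p_*\mu$ is Haar measure on $M$ and $p\circ\alpha=\alpha_0\circ p$, the entropy of a measure-preserving factor bounds the entropy of the extension from below. The matching upper bound $\psi\le\psi_0$ is the delicate step and is where Lemma~\ref{Newentropy} (packaged as Proposition~\ref{preservationofentropy}) enters. The strategy is to fix $\mathbf{t}\in\Zk$, take a measurable partition $\eta$ of $M$ subordinated to the (homogeneous) unstable foliation $W^u_{\alpha_0(\mathbf{t})}$ with $\eta_{\alpha_0(\mathbf{t})}=\e$, and a Pesin partition $\xi$ of $M$ upstairs, subordinated to $W^u_{\alpha(\mathbf{t})}$, full entropy for $T=\alpha(\mathbf{t})$, and refining $p^{-1}\eta$. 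Applying Lemma~\ref{Newentropy} to $\xi$ and $p^{-1}\eta$ yields
\[
h_\mu(T)\le h_{\mathrm{Haar}}(\eta,\alpha_0(\mathbf{t}))+h_\mu(\xi\vee\zeta,T)=\psi_0(\mathbf{t})+h_\mu(\xi\vee\zeta,T),
\]
where $\zeta=(p^{-1}\eta)_T=p^{-1}(\eta_{\alpha_0(\mathbf{t})})$ is the partition of $M$ into fibers of $p$. One then needs to show that the ``fiber entropy'' term $h_\mu(\xi\vee\zeta,T)$ vanishes, using the largeness of $\mu$ together with the homogeneous structure of $W^u_{\alpha_0(\mathbf{t})}$.

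Once the identity $\psi=\psi_0$ is established, the conclusion follows quickly. The function $\psi_0$ fails to be differentiable exactly on the $\alpha_0$-Lyapunov hyperplanes. By Corollary~\ref{Hu-extension}, $\psi$ is linear inside each $\alpha$-Weyl chamber, so its set of non-smooth points is contained in the set of $\alpha$-Lyapunov hyperplanes. The equality $\psi=\psi_0$ therefore forces each $\alpha_0$-Lyapunov hyperplane $L=\ker\chi^{(0)}$ to be an $\alpha$-Lyapunov hyperplane. For the half-space matching, observe that the gradient of $\psi_0$ jumps across $L$ by exactly $\chi^{(0)}$, pointing into the side where $\chi^{(0)}>0$; the gradient of $\psi$ jumps across $L$ by a positive multiple of the $\alpha$-exponent $\chi$ with $\ker\chi=L$, pointing into the side where $\chi>0$. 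Equating these two jumps identifies the positive half-spaces.

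The main obstacle is the upper bound $\psi\le\psi_0$, namely the vanishing of the fiber-entropy term in the application of Lemma~\ref{Newentropy}. A priori $\alpha$ could carry additional entropy in directions that are squashed by $p$, and one needs the largeness of $\mu$ together with the rigid algebraic structure of the unstable foliations of $\alpha_0$ to rule this out; this is precisely the role of the ``new entropy inequality'' advertised in the introduction.
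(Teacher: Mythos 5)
The central step of your argument --- the identity $\psi\le\psi_0$, i.e.\ $h_\mu(\alpha(\mathbf{t}))\le h_{\mathrm{Haar}}(\alpha_0(\mathbf{t}))$ for all $\mathbf{t}$ --- is not established, and you candidly flag it as the ``delicate step.'' The trouble is that this bound is not something one can prove at this stage of the argument without circularity. Largeness of $\mu$ gives only the lower bound $\psi\ge\psi_0$; it does not rule out positive entropy in the fibers of the semiconjugacy. One would know the fiber entropy vanishes if one knew that the semiconjugacy is bijective $\mu$-a.e., or that the Lyapunov exponents of $\alpha$ match those of $\alpha_0$ --- but these are precisely the conclusions of Theorems~\ref{toralcase}(2) and \ref{toralcase2}, for which the present lemma is an ingredient. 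Applying Lemma~\ref{Newentropy} as you suggest yields $h_\mu(T)\le h(\eta,S)+h_\mu(\xi\vee\zeta,T)$, but nothing in the hypotheses forces the fiber-entropy term $h_\mu(\xi\vee\zeta,T)$ to vanish; indeed the whole point of Proposition~\ref{preservationofentropy} is to compare $h(\eta,S)$ to $h(S)$, not to bound $h(T)$ by $h(S)$.

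The paper's proof sidesteps this global entropy identity entirely. It argues by contradiction on a specific pair $\mathbf{n},\mathbf{m}\in\Z^k$ that would lie on opposite sides of $L$ yet in the same $\alpha$-Weyl chamber. Coincidence of the unstable manifolds $\w^u_{\alpha(\mathbf{n})}=\w^u_{\alpha(\mathbf{m})}$ upstairs forces, via the semiconjugacy, the image of this manifold into the \emph{intersection} $W^u_{\alpha_0(\mathbf{n})}\cap W^u_{\alpha_0(\mathbf{m})}$, a proper homogeneous subfoliation of $W^u_{\alpha_0(\mathbf{n})}$. One then chooses a full-entropy increasing partition $\tilde\xi$ for $\alpha(\mathbf{n})$ subordinated to $\w^u_{\alpha(\mathbf{n})}$ and a partition $\eta$ downstairs subordinated to the intersection with $p^{-1}\eta<\tilde\xi$; Proposition~\ref{preservationofentropy} (i.e.\ the new entropy inequality) shows $\eta$ must be full entropy for $\alpha_0(\mathbf{n})$ with respect to Haar, which is impossible because $\eta$ is subordinated to a proper sub-unstable foliation. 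That localized contradiction is what makes the lemma accessible at this early stage; it requires neither $\psi\le\psi_0$ nor any a priori control of the exponents of $\alpha$. Your reduction to the comparison of entropy functions is a clean idea and would work once the lemma (and more) are known, but as a route to \emph{proving} the lemma it presupposes too much.
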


\begin{remark} Observe that this lemma implies that the number of  non-proportional (and hence nonzero) Lyapunov exponents for the nonlinear action is greater or equal than the
number of   coarse Lyapunov distributions for the linear action. Then later
we prove that under our assumptions  Lyapunov hyperplanes for the nonlinear action  also  correspond to Lyapunov hyperplanes
 for the linear action.
\end{remark}

\begin{proof}[Proof of Lemma \ref{preservationofweylchambers}]
Assume by contradiction that there are two elements $\n,\m\in\Z^k$ on
different sides of $L$ but in the same Weyl chamber for $\alpha$.
Then we will have that $\w_{\alpha(\n)}^u(x)=\w_{\alpha(\m)}^u(x)$ but
$W_{\alpha_0(\n)}^u\neq W_{\alpha_0(\m)}^u$. Since the semiconjugacy maps unstable manifolds for $\a$ into unstable manifolds for $\ao$,  either for $\n$ or
$\m$ the following is true (we assume it is for $\n$):
$$p(\w_{\alpha(\n)}^u(x))\subset p(x)W_{\alpha_0(\n)}^u\cap
E_{\alpha_0(\m)}^u\subsetneq p(x)W_{\alpha_0(\n)}^u.$$

Now, we may take a full entropy increasing partition $\tilde\xi$ for
$\alpha(\n)$ subordinated to $\w_{\alpha(\n)}^u$ like the one built in
\cite{LY} and an increasing partition $\eta$ subordinated to
$yW_{\alpha_0(\n)}^u\cap W_{\alpha_0(\m)}^u$, again like in
\cite{LY}, and we may build them in such a way that
$p^{-1}\eta<\tilde\xi$.

Since the negative iterates of  $\alpha_0(\n)$ contract $W_{\alpha_0(\n)}^u\cap W_{\alpha_0(\m)}^u$
 we have that $\eta_{\alpha_0(\n)}=\e$ and hence
$(p^{-1}\eta)_{\alpha(\n)}=\zeta$. So we have that, using Proposition
\ref{preservationofentropy}, $\eta$ is a full entropy partition for
$\alpha_0(\n)$.

But since we are considering Lebesgue measure for $\alpha_0$,
$W_{\alpha_0(\n)}^u\cap W_{\alpha_0(\m)}^u\subsetneq
E_{\alpha_0(\n)}^u$ and $\eta$ is subordinated to
$yW_{\alpha_0(\n)}^u\cap W_{\alpha_0(\m)}^u$ we have that
$$h(\eta,\alpha_0(\n))<h(\alpha_0(\n))$$
which gives a contradiction.
\end{proof}

Now, we consider  the case at hand:
there are no proportional Lyapunov exponents  for the linear action, i.e. there are
 different exponents $\chi_1,\dots\chi_n$ and none of them are  proportional.
Thus in this case $\alpha$ also has $n$
different non-proportional Lyapunov exponents and that the Weyl
chambers coincides. In particular we have that there are positive
numbers $c_i$ such that $\tilde{\chi}_i=c_i\chi_i$ for $i=1,\dots
n$. So, we have already excluded zero exponents.

\begin{corollary}\label{lyapunov foliations}
For every $i=1,\dots, n$ there is a Lyapunov foliation $\w^i$
associated to $\tilde\chi_i$, such that the leaf $\w^i(x)$ is mapped
by the semiconjugacy $p$ into the corresponding coset
$W^i(p(x))$.
\end{corollary}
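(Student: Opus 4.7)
The strategy is to realize each Lyapunov foliation as an intersection of stable foliations of carefully chosen action elements, and then use the elementary fact that the semiconjugacy $p$ carries $\a$-stable manifolds into $\ao$-stable cosets.

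First I would collect the relevant realization on both sides. Under our strong simplicity assumption on $\ao$, every coarse Lyapunov distribution is a single Lyapunov line $E^i$, and by standard intersection of stable bundles one can pick finitely many elements $\m_1,\ldots,\m_r \in \Z^k$ so that
$\bigcap_{j=1}^r E^s_{\ao(\m_j)} = E^i$,
the corresponding intersection of cosets giving the one-parameter subgroup leaf $W^i(p(x))$. By Lemma~\ref{preservationofweylchambers} Weyl chambers of $\a$ and $\ao$ agree, so the same $\m_1,\ldots,\m_r$ satisfy $\bigcap_j \tilde E^s_{\a(\m_j)} = \tilde E^i$, the Lyapunov distribution of $\tilde\chi_i = c_i\chi_i$. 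In the absence of (positively) proportional Lyapunov exponents, the discussion in Section~\ref{section-prelim} shows that $\tilde E^i$ integrates uniquely to the Lyapunov foliation $\w^i$ whose leaves at a regular point are precisely $\w^i(x)=\bigcap_j \w^s_{\a(\m_j)}(x)$.

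Next I would verify that $p$ sends each $\a$-stable manifold into the corresponding $\ao$-stable coset. This is a routine semiconjugacy argument: if $y\in\w^s_{\a(\m)}(x)$ then $d(\a(\m)^n x,\a(\m)^n y)\to 0$, and continuity of $p$ together with $p\circ\a(\m)=\ao(\m)\circ p$ forces $d(\ao(\m)^n p(x),\ao(\m)^n p(y))\to 0$. Since for the algebraic action the set of points whose forward orbit converges to that of $p(x)$ is exactly the coset $p(x)\, W^s_{\ao(\m)}$, we conclude
\[
p\bigl(\w^s_{\a(\m)}(x)\bigr)\subset p(x)\, W^s_{\ao(\m)}.
\]

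Combining the two steps, using only that $p(A\cap B)\subset p(A)\cap p(B)$ and the right-homogeneity of stable cosets, I would obtain
\[
p\bigl(\w^i(x)\bigr)=p\!\left(\bigcap_{j=1}^{r}\w^s_{\a(\m_j)}(x)\right)\subset \bigcap_{j=1}^{r} p(x)\,W^s_{\ao(\m_j)} = p(x)\,W^i,
\]
which is the desired inclusion. The only real care needed is that all of the above manifolds are defined only almost everywhere at regular points, so I would throughout restrict to the full-measure set of Lyapunov-regular $x$ for $\a$ whose $p$-image is a regular point for $\ao$ (which is automatic since $\ao$ is algebraic and Anosov). The main potential obstacle, namely matching the two sets of Weyl chambers and coarse stable distributions, has already been handled by Lemma~\ref{preservationofweylchambers} together with the strong simplicity and absence of proportional exponents for $\ao$, so the argument reduces to the clean topological manipulation above.
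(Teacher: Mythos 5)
Your proof is correct and follows the same route the paper implicitly intends: the corollary is stated in the paper without a separate argument, and the authors clearly regard it as following from Lemma~\ref{preservationofweylchambers} (matching of half-spaces) plus the standard fact, already invoked in that lemma's proof, that the semiconjugacy sends $\a$-stable manifolds into $\ao$-stable cosets; you simply realize $\w^i$ and $W^i$ as the matching finite intersections of stable foliations and push the inclusion through $p$. One minor caveat you should keep in mind is that at this stage of the paper one only knows that each Lyapunov hyperplane of $\ao$ persists for $\a$ (not yet that $\a$ has \emph{no new} exponents), so $\w^i$ should be understood as the coarse Lyapunov foliation given by the intersection $\bigcap_j \w^s_{\a(\m_j)}$ rather than necessarily a one-dimensional foliation; this does not affect the desired inclusion into the one-dimensional coset $W^i(p(x))$.
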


\subsection{Conclusion of the proof}
\begin{proposition}
Conditional measures along $\w^i$ are  equivalent to Lebesgue
a.e.
\end{proposition}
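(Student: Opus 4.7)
Plan. By Theorem~\ref{tech}, for $\mu$-a.e.\ $x$ the conditional $\mu^{\w^i}_x$ is either atomic or equivalent to Lebesgue, so it suffices to show that the set $I\subseteq\{1,\ldots,n\}$ of indices with Lebesgue conditional equals the full set. I will derive two global relations from the entropy of $\a$ and combine them with the factor-map inequality.

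By the Ledrappier--Young formula together with Theorem~\ref{tech}, which in the strongly simple case forces each partial dimension $\gamma_i\in\{0,1\}$, and using the identification $\tilde\chi_i=c_i\chi_i$ with $c_i>0$ from Section~\ref{lyapexp},
\[
h_\mu(\a(\t))=\sum_{i\in I,\ \chi_i(\t)>0}c_i\,\chi_i(\t),\qquad \t\in\R^k,
\]
a piecewise linear function on the common Weyl-chamber decomposition. The semiconjugacy $p$ with $p_*\mu=\mathrm{Haar}$ and Rokhlin's inequality for factor maps give $h_\mu(\a(\t))\ge h_{\mathrm{Haar}}(\a_0(\t))=\sum_j\chi_j(\t)^+$. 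The invariance $h_\mu(\a(\t))=h_\mu(\a(-\t))$ applied to the L--Y expression produces the linear relation $\sum_{i\in I}c_i\chi_i\equiv 0$, and Haar-invariance of $\a_0$ yields $\sum_{j=1}^n\chi_j\equiv 0$. Combining,
\[
\sum_{i\in I}c_i\,|\chi_i(\t)|\ \ge\ \sum_{j=1}^n|\chi_j(\t)|\qquad \forall\t\in\R^k.
\]

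Now assume for contradiction some $j_0\notin I$. If $\chi_{j_0}\notin\mathrm{span}\{\chi_i\}_{i\in I}$, choose $\t\in\bigcap_{i\in I}\ker\chi_i$ with $\chi_{j_0}(\t)\neq 0$: the above inequality becomes $0\ge|\chi_{j_0}(\t)|>0$, a contradiction. The degenerate case $\chi_{j_0}\in\mathrm{span}\{\chi_i\}_{i\in I}$ is handled by invoking Theorem~\ref{graphargument}: for a generic singular $\t\in\ker\chi_{j_0}$ and $\s$ close to $\t$ with $\chi_{j_0}(\s)<0$, atomicity of $\mu^{\w^{j_0}}$ forces the conditional of $\mu$ along $\w^s_{\a(\s)}$ to be supported in the strictly lower-dimensional $\w^s_{\a(\t)}$. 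Pushing this forward by $p$ and comparing with the Lebesgue disintegration of $\mathrm{Haar}=p_*\mu$ along the homogeneous stable foliation $W^s_{\a_0(\s)}$ yields the required contradiction.

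The principal obstacle is the degenerate case, where one must carefully transfer the support information through the H\"older semiconjugacy $p$, aligning the $\a$- and $\a_0$-stable disintegrations using the matched Weyl-chamber structure from Lemma~\ref{preservationofweylchambers} and the strong-simplicity hypothesis (pairwise non-proportionality of the $\chi_j$'s). Once $I=\{1,\ldots,n\}$ is established, the conditional of $\mu$ along each $\w^i$ is equivalent to Lebesgue, completing the proof.
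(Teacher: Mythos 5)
Your second (``degenerate'') case is essentially the paper's argument, but the crucial step is left as a gesture. The paper also argues by contradiction from atomic conditionals, invokes Theorem~\ref{graphargument} to conclude that the conditional on $\w^s_{\as}(x)$ is supported on the strictly smaller $\w^s_{\at}(x)$, and then compares with the algebraic picture. The point is how this comparison is made rigorous. You write ``pushing this forward by $p$ and comparing with the Lebesgue disintegration of $\mathrm{Haar}=p_*\mu$'' as if the support statement could simply be transported through the semiconjugacy, but $p$ is only H\"older continuous, it collapses sets, and push-forward does not commute with disintegration for such a map. The paper's way around this is precisely the new entropy inequality, Lemma~\ref{Newentropy}, feeding into Proposition~\ref{preservationofentropy}: one takes an $\a(-\s)$-increasing partition $\xi$ subordinated to $\w^s_{\at}$, observes via Theorem~\ref{graphargument} that $\xi$ is in fact a full-entropy partition for $\a(-\s)$, builds $\eta$ with $p^{-1}\eta<\xi$ subordinated to $W^s_{\aot}$, and then uses the factor-map entropy argument to force $\eta$ to be a full-entropy partition for $\a_0(-\s)$, which is impossible for Haar since $W^s_{\aot}\subsetneq W^s_{\aos}$. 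That lemma is flagged in the introduction as one of the three new ingredients of the paper; without it, or something playing the same role, the ``comparison'' step in your argument is a genuine gap.

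Two smaller comments. First, the case split is not needed: the graph/entropy argument above works for any $j_0\notin I$ regardless of whether $\chi_{j_0}$ lies in $\mathrm{span}\{\chi_i\}_{i\in I}$, so your first (non-degenerate) case is redundant once the second is completed. Second, in the first case you assert that Theorem~\ref{tech} ``forces each partial dimension $\gamma_i\in\{0,1\}$'' and in particular derive $\sum_{i\in I}c_i\chi_i\equiv 0$; this uses $\gamma_i=1$ for $i\in I$, i.e.\ that a Lebesgue conditional on the one-dimensional $\w^i$ implies full transverse dimension in the Ledrappier--Young flag. That is exactly the unproven converse $(\frak A)$ discussed in Section~\ref{lyentropyformula}. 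The implication you actually have from Theorem~\ref{graphargument} is only $i\notin I\Rightarrow\gamma_i=0$, together with $\gamma_i\le 1$. The inequality $\sum_{i\in I}c_i\,|\chi_i(\t)|\ge\sum_j|\chi_j(\t)|$ can still be salvaged by combining the Rokhlin factor inequality for $\pm\t$ with $\gamma_i\le 1$, so the non-degenerate contradiction survives, but the intermediate identity $\sum_{i\in I}c_i\chi_i\equiv 0$ as stated is not justified.
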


\begin{proof} We shall argue as in the proof of preservation of Weyl
chambers. Using Theorem~\ref{tech} and arguing  by contradiction  we may assume that conditional measures along
$\w^i$ are atomic a.e.  Then take $\t$ and $\s$ as in Theorem
\ref{graphargument} for the suspended action and we may take
$\s\in\Z^k$. We will use the same notation $\a$ and $\a_0$  for the suspended actions of  $\Rk$. Take now an $\a(-\s)$ increasing partition $\xi$
subordinated to $\w^s_{\at}(x)\subsetneq\w^s_{\as}(x)$. Then, since
$p(\w^s_{\at}(x))\subset p(x)W^s_{\aot}\subsetneq p(x)W^s_{\aos}$
we can build another partition $\eta$ subordinated to
$p(x)W^s_{\aot}$ such that $p^{-1}\eta<\xi$. Since by Theorem
\ref{graphargument} conditional measure along $\w^s_{\as}(x)$ is in
fact supported in $\w^s_{\at}(x)$ we have that $\xi$ is a full
entropy partition for $\a(-\s)$ and then by Proposition
\ref{preservationofentropy} $\eta$ should be also a full entropy
partition for $\a_0(-\s)$, but this is impossible since
$W^s_{\aot}\subsetneq W^s_{\aos}$.\end{proof}

Now  we can use Theorem
\ref{absolutecontinuous} and conclude that $\mu$ is an absolutely
continuous measure. This concludes the proof of Theorem~\ref{toralcase}(1).
 
Theorem~\ref{toralcase}(2)  follows exactly as in \cite{kk}. Or, more precisely,  it is  proven there 
using information that we already possess.  Namely \cite[Lemma 4.4]{kk} asserts that the semiconjugacy restricted to a.e. leaf of a Lyapunov foliation is a diffeomorphism. Hence it matches asymptotic rates of expansion/contaction along the foliations and thus  Lyapunov exponents.  \qed

\section{Proof of Theorem~\ref{toralcase2}}\label{prooftorus}

\subsection{Uniqueness}\label{SS:uniqueness} For the proof of uniqueness in Theorem \ref{toralcase2} we will use
the invariant  affine structures on stable manifolds of the action $\a$.   We shall prove that affine structures for
unstable manifolds of the nonlinear action $\a$ are intertwined by the
semiconjugacy with the standard affine structure of unstable spaces
for the linear action $\ao$. Notice that due to Theorem~\ref{toralcase}   no resonance condition  holds for $\a$.  Existence of these affine structures is guaranteed by the non-resonance condition, see \cite[Section 6.2]{kk-01}.

\begin{proposition}\label{prop-affine}
For every $\t\in\Z^k$ and on each leaf of $\w_{\at}^s$ there is a
unique smooth $\a$-invariant affine structure  together with a frame
such that for any regular point $x$ and $j$ such that $\chi_j(\t)<0$
the one-dimensional leaf $\w^j(x)$ is a coordinate line in
$\w_{\at}^s(x)$ and  for any regular point $z\in\w_{\at}^s(x)$ the
affine structure on  $\w^j(z)$ coincides with the restriction of the
affine structure on $\w_{\at}^s(x)$.
\end{proposition}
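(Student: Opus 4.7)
The plan is to deduce Proposition~\ref{prop-affine} from the theory of non-stationary normal forms developed in \cite[Section~6]{kk-01}, using that the non-resonance hypothesis on $\ao$ is inherited by $\a$ through Theorem~\ref{toralcase}(2). Since that part of Theorem~\ref{toralcase} asserts equality of Lyapunov exponents for $\a$ and $\ao$, the hypothesis of Theorem~\ref{toralcase2} that $\ao$ has no resonance $\chi_i=\sum_{j\neq i}m_j\chi_j$ with $m_j\in\Z_{\geq 0}$ transfers verbatim to $\a$.

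First I would apply the non-stationary linearization theorem of \cite[Section~6.2]{kk-01} to the action of $\a(\t)$ along its stable manifold $\w^s_{\at}$. In the strongly simple, non-resonant setting this produces, $\mu$-almost everywhere, a $C^{1+\theta}$ diffeomorphism $\Phi_x:\w^s_{\at}(x)\to E^s_{\at}(x)$ fixing $x\mapsto 0$, depending uniformly continuously on $x$ in any Pesin set, and intertwining the nonlinear dynamics with its derivative:
$$
\Phi_{\a(\t)x}\circ\a(\t)\big|_{\w^s_{\at}(x)}=D_x\a(\t)\big|_{E^s_{\at}(x)}\circ\Phi_x.
$$
Pulling back the standard affine structure of $E^s_{\at}(x)$ via $\Phi_x$ endows $\w^s_{\at}(x)$ with a smooth $\a(\t)$-invariant affine structure. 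Because each Lyapunov subspace $E_j(x)\subset E^s_{\at}(x)$ is preserved by $D_x\a(\t)$, its preimage under $\Phi_x$ is a $\Phi$-straight, $\a(\t)$-invariant smooth curve tangent to $E_j(x)$, hence equals $\w^j(x)$. Choosing in each fibre $E^s_{\at}(x)$ a frame subordinate to the Lyapunov splitting thereby realises each $\w^j(x)$ as a coordinate line.

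Next I would upgrade $\a(\t)$-invariance to full $\a$-invariance: for any $\s\in\Zk$ the map $\a(\s)$ commutes with $\a(\t)$ and $D\a(\s)$ linearly identifies $E^s_{\at}(x)$ with $E^s_{\at}(\a(\s)x)$, so the family $\{\Phi_x\}$ is intertwined by $\a(\s)$ up to a linear map and the affine structure is preserved by the whole action. For the compatibility clause, the affine structure induced on any coordinate line $\w^j(z)\subset\w^s_{\at}(x)$ is smooth, $\a$-invariant and uniformly continuous on Pesin sets; the uniqueness in Proposition~\ref{affineparameters} forces it to coincide with the intrinsic affine parameter on $\w^j(z)$. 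Uniqueness of the pair (structure, frame) is then a routine consequence: two such pairs differ by a measurable family of affine automorphisms of $\w^s_{\at}(x)$ commuting with $\a(\t)$ and restricting to the identity on each $\w^j(x)$ (by the uniqueness on lines), and commutation with the contracting diagonal derivative together with non-resonance forces the automorphism to be the identity.

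The main obstacle I anticipate is Step~2: ensuring that the non-stationary linearization of \cite{kk-01} is actually available in the present non-uniformly hyperbolic setting with all the required properties, i.e. producing a single linearization of $\a(\t)$ on the entire multi-dimensional stable manifold (not merely on one-dimensional coarse leaves) under which every Lyapunov foliation $\w^j$ with $\chi_j(\t)<0$ becomes genuinely affine rather than just tangent to a linear subspace at the base point. This uses the absence of resonances among the negative exponents $\{\chi_j(\t):\chi_j(\t)<0\}$ in an essential way, and the verification that the normal form is simultaneously linear along every Lyapunov direction is where the non-resonance hypothesis does its real work.
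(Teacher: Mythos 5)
Your approach is essentially the same as the paper's own: the paper supplies no separate proof of Proposition~\ref{prop-affine}, but the sentences immediately preceding it make exactly your two observations — that by Theorem~\ref{toralcase} no resonance condition holds for $\a$, and that existence of the invariant affine structures on $\w^s_{\at}$ is then guaranteed by the non-stationary normal form theory of \cite[Section 6.2]{kk-01}. Your write-up simply fills in the intermediate details of that reduction (pulling back the linear structure via the normal-form chart, upgrading $\a(\t)$-invariance to $\a$-invariance by commutativity, and reconciling with Proposition~\ref{affineparameters} on one-dimensional leaves), which is the intended content.
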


We will use additive notations for various invariant foliations associated with the action $\ao$.

\begin{proposition}\label{propconjaffine}
For almost every regular point $z$ the restriction of the
semiconjugacy $h$ to the leaf $\w_{\at}^s(z)$ is an affine bijection
between $\w_{\at}^s(z)$ and the hyperplane $p(z)+E_{\aot}^s$.
\end{proposition}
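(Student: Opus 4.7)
I would proceed in two steps: first show that $h$ restricts to an affine diffeomorphism on each one-dimensional Lyapunov leaf $\w^j(x)\subset\w_{\at}^s(z)$, and then upgrade this to the full stable leaf by appealing to the uniqueness clause in Proposition~\ref{prop-affine}.

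For the first step, Corollary~\ref{lyapunov foliations} tells us that $h$ sends $\w^j(x)$ into the one-dimensional coset $W^j(h(x))$. Because Lyapunov exponents of $\a$ and $\ao$ coincide by Theorem~\ref{toralcase}(2), the argument of \cite[Lemma~4.4]{kk} applies to show that $h|_{\w^j(x)}$ is in fact a $\Ci$ diffeomorphism onto $W^j(h(x))$. The standard translation-invariant affine parameter on $W^j(h(x))$ is $\ao$-invariant, so its pullback under $h|_{\w^j(x)}$ is a smooth $\a$-invariant affine parameter on $\w^j(x)$. By the uniqueness statement in Proposition~\ref{affineparameters}, this pullback must equal the intrinsic invariant affine parameter on $\w^j(x)$, so $h|_{\w^j(x)}$ is an affine diffeomorphism onto $W^j(h(x))$.

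For the second step, endow $\w_{\at}^s(z)$ with the $\a$-invariant affine structure and global frame from Proposition~\ref{prop-affine}, in which each $\w^j(y)$ for regular $y$ with $\chi_j(\t)<0$ is a coordinate line carrying its intrinsic affine parameter, and endow $p(z)+E_{\aot}^s$ with its standard affine structure, in which each one-dimensional coset $W^j$ is a coordinate line. Pulling back the linear structure via the maps $(h|_{\w^j(y)})^{-1}$ from Step~1 produces a smooth $\a$-invariant affine structure on $\w_{\at}^s(z)$ whose coordinate lines are precisely the $\w^j(y)$'s with their intrinsic parameters. The uniqueness in Proposition~\ref{prop-affine} then forces this pullback structure to coincide with the intrinsic one, so $h|_{\w_{\at}^s(z)}$ is affine with respect to the two natural affine structures. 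Bijectivity onto $p(z)+E_{\aot}^s$ follows because an affine map that is a diffeomorphism on each of the coordinate lines of a frame is itself a bijection of affine spaces.

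The main obstacle is ensuring that the leaf-by-leaf pullback in Step~2 actually assembles into a globally well-defined smooth $\a$-invariant affine structure on $\w_{\at}^s(z)$, rather than just a collection of one-dimensional affine identifications. The key is that Proposition~\ref{prop-affine} already supplies a single global coordinate frame on $\w_{\at}^s(z)$: the transport from the linear structure need only be carried out in these coordinates, and the compatibility condition in Proposition~\ref{prop-affine} that the intrinsic affine parameter on $\w^j(y)$ is the restriction of the ambient one guarantees coherence across different base points $y$ without requiring a priori knowledge that $h$ is bijective on the whole stable leaf.
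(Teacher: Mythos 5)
Your Step~1 is essentially what the paper does (implicitly, via the remark after Theorem~\ref{toralcase}(2) that \cite[Lemma~4.4]{kk} makes $h$ a diffeomorphism on Lyapunov leaves, so that uniqueness of the one-dimensional invariant affine parameter forces $h$ to be affine there). Step~2 is where you diverge from the paper, and where I think there is a genuine gap. The paper's argument is a direct density-and-continuity argument: since conditional measures are Lebesgue, almost every point of $\w_{\at}^s(z)$ is regular, so $h$ is affine (and line-to-line) on a dense collection of lines parallel to each coordinate direction of the frame from Proposition~\ref{prop-affine}; together with continuity of $h$ this immediately forces $h|_{\w_{\at}^s(z)}$ to be affine, because a continuous map that carries each coordinate line of a frame to a parallel line by an affine map is affine. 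No appeal to uniqueness of the multi-dimensional affine structure is needed.

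Your Step~2 instead constructs a ``pullback affine structure'' on $\w_{\at}^s(z)$ and then invokes uniqueness in Proposition~\ref{prop-affine} to identify it with the intrinsic one. But the pullback of the linear affine structure on $p(z)+E_{\aot}^s$ along $h$ is not defined until you know $h$ is a bijection (indeed a local diffeomorphism) on the whole stable leaf --- which is exactly what you are trying to prove. You notice this in your last paragraph, but the proposed fix (``the transport from the linear structure need only be carried out in these coordinates'') does not actually produce an a priori affine structure on $\w_{\at}^s(z)$ that is distinct from the intrinsic one; you only have the one-dimensional identifications $(h|_{\w^j(y)})^{-1}$, and assembling these into an $s$-dimensional affine structure requires exactly the coherence that the density argument gives. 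Put differently: knowing that $h$ is affine on each coordinate line, together with Proposition~\ref{prop-affine}, does not automatically determine a candidate $s$-dimensional pullback structure to which uniqueness can be applied. You should replace the uniqueness step with the paper's observation: in the affine coordinates of Proposition~\ref{prop-affine} the map $h$ is affine and direction-preserving on a dense set of coordinate lines, hence affine by continuity, and its bijectivity then follows as you say.
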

\begin{proof}
Take $z$ for which almost every  point of the leaf $\w_{\at}^s(z)$
with respect to the $s$-dimensional volume is regular. Since
conditional measures are equivalent to Lebesgue, the set of such
points is of full $\mu$ measure. Thus there is a dense subset of
$\w^{i}(z)$ where   leaves of $\w^{j}$ for all $j\neq i$ are
defined. By Proposition~\ref{prop-affine} any such manifold is a
part of  a corresponding line and its affine parameterization agrees
with the one coming from the affine structure  in $\w^{i}(z)$. But
we already know that the semiconjugacy on any leaf of $\w^{j}$ is
affine. Furthermore for each regular $y\in \w^{i}(z)$ the  manifold
$\w^{j}(y)$ cannot be just an interval  but must  be the whole line
in the affine structure. Thus we know that $h$ restricted to
$\w^{i}(z)$ is affine on   a dense set of lines parallel to each
coordinate direction. Hence it is  affine.
\end{proof}

\begin{lemma}\label{largeunstables}
For any point $z$ satisfying the assertion of
Proposition~\ref{propconjaffine}, the manifold $\w^{i}(z)$ is a 
complete manifold properly embedded into $\R^n$ 
and at a bounded distance from $E^i$.  Indeed, if we denote the 
semiconjugacy resticted to $\w^i(z)$ by $h_i^z:\w^i(z)\to h(z)+E^i$, 
then its inverse, $p_i^z:h(z)+E^i\to\w^i(z)$ is a proper diffeomorphism 
at a bounded distance from the inclusion. 

\end{lemma}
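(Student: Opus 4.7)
The plan is to lift everything to the universal cover $\R^n$ of $M$ (after passing to a finite torus cover if $M$ is only an infratorus). Since $h$ is homotopic to the identity on the compact manifold $M$, it has a continuous lift $\tilde h\colon\R^n\to\R^n$ satisfying
\[
\|\tilde h-\mathrm{id}\|_\infty=:C<\infty,
\]
and $\a,\a_0$ lift to actions on $\R^n$ intertwined by $\tilde h$. I would fix a lift $\tilde z\in\R^n$ of $z$ and let $\tilde\w^i(\tilde z)$ denote the leaf through $\tilde z$ of the pullback of $\w^i$.

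The first main step is to lift Proposition~\ref{propconjaffine}. By Proposition~\ref{prop-affine}, $\w^i(z)$ is a coordinate line in $\w^s_{\at}(z)$, and by Corollary~\ref{lyapunov foliations} the semiconjugacy sends it into a line parallel to $E^i$; combining these with the affine bijectivity asserted by Proposition~\ref{propconjaffine} and lifting, we obtain that $\tilde h$ restricted to $\tilde\w^i(\tilde z)$ is an affine bijection onto the line $\tilde h(\tilde z)+E^i$. Inverting yields a smooth affine diffeomorphism $p_i^z\colon\tilde h(\tilde z)+E^i\to\tilde\w^i(\tilde z)$. The key observation is then a one-line estimate: for every $w$ in the domain, $\tilde h(p_i^z(w))=w$, hence
\[
\|p_i^z(w)-w\|=\|p_i^z(w)-\tilde h(p_i^z(w))\|\le C.
\]
Thus $p_i^z$ stays within distance $C$ of the inclusion $\tilde h(\tilde z)+E^i\hookrightarrow\R^n$, so it is proper, and its image $\tilde\w^i(\tilde z)$ is a properly embedded, closed, complete submanifold of $\R^n$ at bounded distance from the affine line $\tilde h(\tilde z)+E^i$.

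The only subtlety I anticipate is verifying the bounded-distance property $\|\tilde h-\mathrm{id}\|_\infty\le C$ when $M$ is an infratorus rather than a torus; this is handled by first passing to a finite torus cover where the lift of $h$ is homotopic to the identity, so that the standard argument for bounded lifts applies, and then descending. Beyond that technicality, the conclusion is essentially a direct consequence of the affine structure of the semiconjugacy on stable manifolds already established in Proposition~\ref{propconjaffine}, combined with the elementary fact that a map whose composition with $\tilde h$ is the identity must stay at bounded distance from the identity.
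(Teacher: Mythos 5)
Your proof is correct and follows essentially the same line as the paper's: Proposition~\ref{propconjaffine} gives the affine bijection of $h$ onto $h(z)+E^i$, and properness plus the bounded-distance estimate for the inverse both follow from the bounded distance of the lifted semiconjugacy from the identity. Your one-line estimate $\|p_i^z(w)-w\|=\|p_i^z(w)-\tilde h(p_i^z(w))\|\le C$ is a clean packaging of the sequence argument the paper uses; the remark about passing to a finite torus cover is unnecessary (the lift of $h$ to $\R^n$ is already at bounded distance from the identity since $h$ is homotopic to the identity on the compact infratorus) but harmless.
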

\begin{proof}
Proposition~~\ref{propconjaffine} implies this statement for any
compact   part of $\w^{i,+}(z)$. But since $h$ is a bounded distance
away from identity  for any sequence of points  on the $h(z)+E^{i}$
which goes to infinity the pre-images go to infinity too. The assertion 
of about the inverse of the semiconjugacy follows from the fact that the 
semiconjugacy is at a bounded distance from identity.
\end{proof}

Now we shall show how the Hopf argument applies in this case to get
uniqueness similar to what is done for instance in \cite{rhrhtu}. To this 
end we will need that for  any two given regular points $x_1, x_2$  
(possibly regular with respect to different large measures), the stable 
manifold of one intersects the unstable manifold of the other. This is done 
through an index argument.

\begin{lemma}\label{indexargument}
Let $E^i\subset\R^n$, $i=1,2$ be two subspaces such that  $E^1\oplus E^2=\R^n$. 
Let $p_i: E^i\to \R^n$, $i=1,2$ be two proper embeddings at a bounded 
distance from inclusion. Call $p_i(E^i)=W_i$, $i=1,2$. Then $W_1\cap W_2\neq\emptyset$. 
\end{lemma}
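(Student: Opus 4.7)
The plan is to recast the intersection statement as a surjectivity question for a single map between Euclidean spaces and conclude by a standard degree-theoretic argument. Define
$$F\colon E^1\times E^2 \to \mathbb{R}^n, \qquad F(x_1,x_2)=p_1(x_1)-p_2(x_2).$$
Then $W_1\cap W_2 \neq \emptyset$ if and only if $0\in F(E^1\times E^2)$.

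Because $E^1\oplus E^2=\mathbb{R}^n$, the linear map $L(x_1,x_2)=x_1-x_2$ is a linear isomorphism $E^1\times E^2 \to \mathbb{R}^n$, so there is $c>0$ with $|L(x_1,x_2)|\geq c(|x_1|+|x_2|)$. The hypothesis that the $p_i$ lie at bounded distance from the inclusion gives a constant $C$ with $|p_i(y)-y|\leq C$ for $y\in E^i$, and therefore $|F-L|\leq 2C$ pointwise. Combining these two estimates yields
$$|F(x_1,x_2)|\geq c(|x_1|+|x_2|)-2C,$$
which shows that $F$ is a proper continuous map. So the first step is simply to check properness from the direct sum assumption together with the bounded-distance assumption.

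Next I would run the straight-line homotopy $F_t=(1-t)L+tF$, $t\in[0,1]$. Since $F_t-L=t(F-L)$, the same estimate gives $|F_t-L|\leq 2C$ uniformly in $t$, hence $|F_t(x_1,x_2)|\geq c(|x_1|+|x_2|)-2C$ with a bound independent of $t$. This makes the homotopy $(t,x_1,x_2)\mapsto F_t(x_1,x_2)$ a proper map from $[0,1]\times E^1\times E^2$ to $\mathbb{R}^n$. Identifying $E^1\times E^2$ with $\mathbb{R}^n$ via $L$, we obtain a proper homotopy from the identity to $F\circ L^{-1}$.

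The argument concludes by invariance of Brouwer degree under proper homotopies: the identity has degree one, so $F\circ L^{-1}$ has degree one as a proper continuous map $\mathbb{R}^n\to\mathbb{R}^n$, and any such map of nonzero degree is surjective. In particular $0$ lies in its image, giving the desired intersection point. The only substantive step is verifying the two properness claims; the rest is the standard fact that a bounded perturbation of a linear isomorphism between Euclidean spaces of the same dimension is surjective, which I would phrase in degree-theoretic language to avoid any implicit smoothness assumption on $p_1,p_2$.
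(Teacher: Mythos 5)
Your argument is correct, and it is a different packaging of the degree-theoretic idea than the one used in the paper. The paper argues by contradiction: assuming $W_1\cap W_2=\emptyset$, it forms the normalized maps $X_t(v_1,v_2)=\bigl(p_1(v_1/t)-p_2(v_2/t)\bigr)/\|p_1(v_1/t)-p_2(v_2/t)\|$ from $D_1\times D_2$ to $S^{n-1}$, observes that $X_1$ extends over the full disk and so has degree zero on $\partial(D_1\times D_2)$, and then shows that as $t\to 0$ the restriction to the boundary converges uniformly to the linear map $(v_1,v_2)\mapsto (v_1-v_2)/\|v_1-v_2\|$, which is a homeomorphism of degree $\pm 1$ --- a contradiction. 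You instead work directly with the map $F(x_1,x_2)=p_1(x_1)-p_2(x_2)$ on all of $E^1\times E^2$, verify properness from the coercivity of the linear isomorphism $L(x_1,x_2)=x_1-x_2$ together with the uniform bound $\|p_i-\mathrm{id}\|\le C$, and invoke proper-homotopy invariance of the degree of a proper self-map of $\mathbb R^n$ to conclude $\deg(F\circ L^{-1})=\deg(\mathrm{id})=1$, hence surjectivity. Both proofs hinge on the same estimate ($p_i$ is a bounded perturbation of the inclusion, so at large scale the picture is linear) and on the same topological fact that a nonvanishing degree forces a solution; the paper localizes the degree argument to a large sphere while you phrase it as surjectivity of a proper map. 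Your version is arguably cleaner in that it avoids the rescaling $v_i/t$ and the limit as $t\to 0$, and it makes visible that injectivity of the $p_i$ is never actually used --- only properness and the bounded-distance estimate --- so the conclusion holds for proper continuous maps, not just embeddings. The one point you should be explicit about if you want a fully self-contained write-up is the standard package you are invoking: that a proper continuous map $\mathbb R^n\to\mathbb R^n$ has a well-defined integer degree (e.g.\ via the induced map on one-point compactifications $S^n\to S^n$), that this degree is invariant under proper homotopies, and that a proper map of nonzero degree is surjective (using that the image of a proper map is closed). All three are standard, and your proof is correct as stated.
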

\begin{proof}
Let us assume by contradiction that $W_1\cap W_2=\emptyset$. 
Let $D_i$ be closed unit disks in $E_i$ and define for $0<t\leq1$, 
$$X_t:D_1\times D_2\to S^{n-1}\subset E^1\oplus E^2$$
 by 
 $$X_t(v_1,v_2)=\frac{p_1(v_1/t)-p_2(v_2/t)}{\|p_1(v_1/t)-p_2(v_2/t)\|}.$$ Observe that $X_t$ is well defined since the denominator is never $0$. Let us write $p_i(z)=z+\psi_i(z)$, we have that there is $C>0$ such that $\|\psi_i(z)\|\leq C$ for every $z\in E^i$. 

Let us see that as $t\to 0$ we have that $X_t$ restricted to $\partial (D_1\times D_2)$ converges uniformly to 
$$X_0(v_1,v_2)=\frac{v_1-v_2}{\|v_1-v_2\|}.$$
Indeed $$p_1(v_1/t)-p_2(v_2/t)=\frac{v_1-v_2}{t}+\psi_1(v_1/t)-\psi_2(v_2/t),$$
hence 
\begin{eqnarray*}
X_t(v_1,v_2)=\frac{p_1(v_1/t)-p_2(v_2/t)}{\|p_1(v_1/t)-p_2(v_2/t)\|}=\frac{v_1-v_2+t(\psi_1(v_1/t)-\psi_2(v_2/t))}{\|v_1-v_2+t(\psi_1(v_1/t)-\psi_2(v_2/t)))\|}
\end{eqnarray*}
since the $\psi_i$ are uniformly bounded and the denominator is is bounded away from 
zero when $(v_1,v_2)\in \partial (D_1\times D_2)$ and $t$ is small, we get $X_t\to X_0$ uniformly.

But then it is known that $X_0$ is a map of nonzero degree (it is a homeomorphism), while $X_1$ restricted to $\partial (D_1\times D_2)$ should have zero degree since it extendes to $D_1\times D_2$.

\end{proof}

Now, take $\mu_1$ and $\mu_2$ two ergodic large measures. Fix an
element of the action $f:=\an$ with all exponents nonzero. We shall
prove uniqueness using $f$. Let us call $G$ the set of points
satisfying the conclusion of Proposition \ref{propconjaffine}, we
have that $G$ has full measure for every large measure.

Take a continuous function $\phi$, we will prove that $\int\phi
d\mu_1=\int\phi d\mu_2$. Let us take a set $B_1\subset G$ of full
$\mu_1$ measure such that for $x$ en $B_1$,
$\phi^+(x)=\phi^-(x)=\int \phi d\mu_1$, here $\phi^+$ and $\phi^-$
denote forward and backward Birkhoff averages (with respect to $f$)
respectively. Similarly take a set $B_2\subset G$ of full $\mu_2$
measure where $\phi^+(x)=\phi^-(x)=\int \phi d\mu_2$ for $x\in B_2$.
Now take sets $A_i\subset B_i$ of full $\mu_i$ measure such that if
a point $x$ is in $A_i$ then $Leb_{W^u(x)}$ almost every point $y$
in $W^u(x)$ is in $B_i$. We have that $A_i$ have full measure by the
absolute continuity of the stable and unstable foliations.

We know that $\phi^+$ is constant on stable manifolds and $\phi^-$
is constant on unstable manifolds.

We now lift all the objects to the universal covering in order to
define holonomy maps in a more clear manner, we denote points in the
universal covering and in the manifold in the same manner and this
should not give any confusion. Take now two points $x_1\in A_1$ and
$x_2\in A_2$. By Lemmas \ref{largeunstables} and \ref{indexargument} 
we have that $W^s(x_1)\cap W^u(x_2)\neq\emptyset$,
but we do not know a priory how this intersection is. Now, the
semiconjugacy must send this intersection into the intersection of
$h(x_1)+E^s$ and $h(x_2)+E^u$ which is a point. By Proposition
\ref{propconjaffine}, we know that the semiconjugacy restricted to
$W^u(x_2)$ is one to one, hence this intersection is a point. Still
we do not know if this intersection is transversal, so we can not
follow the usual Hopf argument. In any case, since almost every
point in $W^u(x_1)$ is in $G$ we can define the holonomy map
$\pi:W^u(x_1)\to W^u(x_2)$ by $\pi(z)=W^s(z)\cap W^u(x_2)$. Observe
that $\pi$ is a priori only defined on a set of full Lebesgue
measure in $W^u(x_1)$. We want to prove that $\pi$ is absolutely
continuous but since the intersection defining $\pi$ is not
transversal a priori we can not follow the usual absolute continuity
proof. What we have is that the semiconjugacy restricted to
$W^u(x_i)$ is smooth, in fact it is affine with respect to the
affine structure and the semiconjugacy also conjugates the
holonomies, that is: if we define $Hol:h(x_1)+E^u\to h(x_2)+E^u$ as
we did with $\pi$ we have that $h\circ\pi=Hol\circ h$ for every
point in $W^u(x_1)$ where $\pi$ is defined. But $Hol$ is a smooth
map since $Hol$ is simply a translation also $h$ restricted to
$W^u(x_i)$ is smooth hence $\pi=h^{-1}\circ Hol\circ h$ coincides
a.e. with a smooth map and hence it is absolutely continuous. Now we
have that $B_1\cap W^u(x_1)$ has full Lebesgue measure in $W^u(x_1)$
and hence its image by $\pi$ has also full Lebesgue measure in
$W^u(x_2)$ and hence this image intersects $B_2$, that is, we can
take a point $a\in B_1$ whose stable manifold contains a point $b\in
B_2$ hence we have that $\int\phi
d\mu_1=\phi^+(x_1)=\phi^+(x_2)=\int\phi d\mu_2$ and we are done.

\subsection{Semiconjugacy and measurable isomorphism of  $\a$ and $\ao$}
Let us see that the semiconjugacy is one to one over a set of full measure. Let $R$ be the set of regular points satisfying the conclusion of Lemma \ref{largeunstables}. We shall see that the restriction of $h$ to $R$ is one to one. Let us fix a nonsingular element of the action. We already know that the restriction of $h$ to stable and unstable manifolds of regular points is a diffeomorphism. Take $x$ and $y$ in $R$ and assume by contradiction that $h(x)=h(y)=a$. By Lemmas \ref{largeunstables} and \ref{indexargument} we know that $\w^s(x)\cap\w^u(y)\neq\emptyset$. Take $z$ in this intersection. Then $h(z)\in (h(x)+E^s)\cap (h(y)+E^u)$ but since $h(x)=h(y)=a$ this last intersection is $a$ and hence $h(z)=a$. Now, injectivity along stable manifolds gives a contradiction since $z\in\w^s(x)$ and $h(z)=a=h(x)$. Since the image of $R$ 
has full measure, we get that the restriction of $h$ to $R$ gives a measurable isomorphism between $\a$ and $\ao$ and thus we finish the proof of Theorem \ref{toralcase2}.

\section{Proof of Theorem~\ref{thm-cocycles}}\label{s: cocycles}
 We first need to describe properly  the classes of cocycles considered in Theorem~\ref{thm-cocycles}. Let us fix a small positive number $\e$ and consider Pesin sets  $\Re_{\e}^l$ as defined in \eqref{Pesinset}.  
 
 Let us consider a Lyapunov Riemannian metric  on each Lyapunov distribution defined on the set of full measure $\Re_{\e}=\bigcup_l\Re_{\e}^l$. It is defined  similarly to \eqref{eqLyapunovmetric} with summation over $\Zk$ instead of integration. By \cite[Proposition 5.3]{KKRH} this metric is H\"older continuous on each $\Re_{\e}^l$. Now consider a system of neighborhoods  $P_\e (x)$, sometimes called {\em Pesin boxes}, of points in  $\Re_{\e}$ whose size depends on $l$ and slowly oscillates with the action,
similarly to the function $K_{\e}$ from Proposition~\ref{MET}. Using a local coordinate system from a fixed finite atlas project the Lyapunov metric from $T_x$ to the Pesin box around $x$ with constant coefficients. Thus we obtain a system of locally defined metrics.

\begin{definition}A cocycle $\beta$ defined on $\Re_{\e}$ is called {\em Lyapunov H\"older} if for any $l, \,\, x\in \Re_{\e}^l$ $\beta$ is H\"older continuous on $\Re_{\e}^l\cap P_\e (x)$ with H\"older exponent and constant independent of $x$ and $l$.

Similarly we define Lyapunov smooth  cocycles by requiring  smoothness  along local stable manifolds of points in  $\Re_{\e}^l$ with uniform bounds on derivative with respect to a Lyapunov metric within Pesin boxes.
\end{definition}
Notice that by Proposition~\ref{prop-affine} the semi-conjugacy $h$ between $\a$ and the  linear  action $\ao$ is bijective on an increasing  sequence of compact Pesin sets as well on stable and unstable manifolds of points from those sets  with respect to all elements of the action $\a$. The strategy of the proof is to  use these bijections to construct cocycles  over $\ao$ and then use the method of \cite{KNT}. 

Take the image of a  Pesin set $\mathcal P$ under the semi-conjugacy. If a   solution of the coboundary equation exists then   along the stable manifold $\w$ of any  element of the action    is given by the familiar telescoping sum see e.g. \cite[Proof of Theorem 3.1]{KNT}. This implies in particular that the solution
(transfer function) is Lyapunov H\"older or Lyapunov  smooth if the cocycle has  one of those properties.

By the absolute continuity $\w\cap\mathcal P$ has large conditional measure in $\w$ and the union of our Pesin sets has full conditional measure.  Now one considers periodic cycles anchored at points of the Pesin sets. Any two successive points in such a cycle
lie on a one-dimensional Lyapunov line and any three successive points lie in a stable manifold of some element. The last statement follows from the TNS condition that is weaker than our strongly simple property. One can simply consider the situation after the semi-conjugacy, as a cocycle over the linear action.  Arguing as  in \cite{KNT} we deduce that   solution can be constructed consistently  from a single typical point to the union of Pesin sets which has full measure. Since the semi-conjugacy is bijective on a full measure set and is smooth along almost every stable manifold the solution can be brought back,   and, as we pointed out,  is  then
 Lyapunov H\"older or Lyapunov smooth.\qed

\begin{remark}In the absence of semi-conjugacy  but  assuming  strongly simple and no resonance conditions one can still extend  the solution along Lyapunov lines but due to the ``holes'' in the union of Pesin sets the argument works only locally. This leads to the following statement.

Let $\mu$ be a measure as in  Theorem~\ref{general}.
The spaces of  classes of Lyapunov H\"older
(corr. Lyapunov smooth) cocycles with respect to cohomology with Lyapunov H\"older  (corr. Lyapunov smooth) transfer functions   are finite dimensional.

Even in the absence of such holes   the solution can be constructed on the universal cover but cannot in general be projected to the original manifold since  the possibility of the action preserving a non-trivial homology class  cannot  be excluded.
\end{remark}

\section{Beyond the strongly simple case}\label{s: beyond}
\subsection{Summary}
We can  tentatively claim  partial generalizations of the some results of this paper in the presence of  multiple or positively proportional (but not negatively proportional) exponents.  We can also outline the limits of  applicability for our methods and formulate plausible conjectures. 

One should consider separately the general case of hyperbolic measures 
for smooth actions  as in Section~\ref{sbs:stronglysimple}  and large measures  for  actions on tori and nilmanifolds with  hyperbolic homotopy data as in Section~\ref{sbs:tori-nil}. 
At the level of linear algebra three effects may appear separately on in combinations:
\begin{enumerate}
\item {\em Negatively proportional exponents}. Our methods that are essentially geometric are  not suitable for this situation. The main problem with using the linear algebra of Lyapunov exponents is that in the representative symplectic case the picture of Lyapunov hyperplanes and Weyl chambers is the same as for the product of rank one actions where rigidity does not take place. Thus  there is not much hope for developing a general theory along the lines of  \cite{KKRH}.  

Even for algebraic actions on  a torus measure rigidity is established  by different methods  that take into account global Diophantine properties of stable foliations \cite{EL}. Another approach    can be developed based on an unpublished preprint of J. Feldman and M. Smorodinsky from the early 1990s. Finding a non-uniform version of these arguments  is a serious albeit not a hopeless challenge. 
\smallskip

\noindent Hyperbolic measures without  negatively proportional  Lyapunov exponents are called {\em  totally non-symplectic} (TNS).\medskip

\item {\em  Multiple exponents.} The first  central step  of our approach is ``freezing'' the action in question 
along the walls of Weyl chambers.  Notice that  for   linear (and hence algebraic)  actions this is possible  in the semisimple case, i.e. in the absence of Jordan blocks.  For actions on  tori  and nilmanifolds assuming that the algebraic model (the homotopy data)  is semisimple helps.  In the presence of jordan blocks  the situation is less hopeful.
\medskip

\item {\em Simple positively proportional exponents.} This case (assuming that no other effects appear)  is the most hopeful  
and is discussed in more detail below. The key issue here is understanding 
 resonances and invariant geometric structures that appear on coarse Lyapunov foliations. 
\end{enumerate}

\subsection{Hyperbolic measures for actions on  manifolds} The main difficulty  here is  that   vanishing of a Lyapunov exponent  does not guarantee that along Lyapunov foliations (even if the exponent is simple and if those exist)  on a set of large measure 
the distances remain bounded.  The technical devise that allows to overcome this problem  in the strongly simple case  is the synchronizing time change   described in Section~\ref{sbs:synchronizing}.  This is easily modified   to obtain   bounded  growth estimates  like in Proposition~\ref{timechange} for any  one  Lyapunov  direction, e.g.  the fastest for which Lyapunov distribution is  integrable. However, in general fin general    time change would be different  in general so  no simultaneous ``freezing'' is possible. 
This problem looks fundamental  and  probably cannot be overcome  within the  usual rank  $\ge 2$  assumption. 

But in fact in our argument  synchronization of one exponent is achieved along the whole Lyapunov hyperplane.  If the rank is $\ge 3$  that  {\em simultaneous} synchronization of two exponents can  be achieved along a codimension two subspace and so on.   This of course  is under the assumption that exponents are simple. Thus the following statement holds: 
\medskip

\noindent {\em Simultaneous synchronization of all  proportional exponents is possible if their number   does not exceed the rank of the action minus one.}\medskip

Now one considers  an invariant geometric structure on the coarse Lyapunov foliation.  In the absence of double resonances  this structure is flat  affine and one can show that Lyapunov distributions integrate to  foliations into lines with respect to this structure.  The critical $\pi$-partition argument holds in this case and allows  to show that conditional measures on the coarse Lyapunov foliation   are  supported on an affine subspaces and invariant under  transitive groups of
affine transformations on those subspaces. Hence those conditional measures are either atomic or absolutely continuous on  smooth submanifolds of the  leaves  of the coarse Lyapunov foliations. 
\medskip

The arguments outlined above       lead to the proof of a {\em generalization of Theorem~\ref{stronglysimple} for TNS  actions   with simple  positively proportional exponents, no double resonances   if the number of exponents proportional to a given one   does not exceed the rank of the action minus one.} \medskip

Detailed proofs will appear in a subsequent paper.

The  case with double resonances  is  somewhat more complicated because for the 
slow directions there are no unique curves tangent tot the slow directions 
similar to likes in the affine case.  Instead there  are  some parametric families of such curves like parabolas in the case of $2:1$ resonance. If one can prove that  Lyapunov  distributions integrate to  certain families of such curves  the rest of the argument should be similar to the non-resonance case.

An extension of Theorem~\ref{general}  looks more problematic. The problem is that the full entropy assumption  does not  catch   contributions coming from different positively proportional exponent. One should look for a 
   an appropriate ``high entropy''  assumptions  that would  lead   to the assertion that conditional measure along  the coarse Lyapunov foliation is absolutely continuous. After that absolute continuity of the measure can be established, similarly to the proof of Theorem~\ref{general}.

\subsection{Actions on tori and nil-manifolds}
As was mentioned above, our methods are restricted to the TNS case so me make this assumption for the algebraic action $\ao$. To be able to carry out the ``freezing'' argument
  we also  need to avoid  Jordan blocks for the   action $\ao$, i.e. to assume that its linear part is semi-simple (diagonalizable over $\C$).
  
Then the action along the Lyapunov hyperplanes is an isometry. The main issue  is to prove that there are no new Lyapunov hyperplanes for the  action $\a$. 
So far we  can prove this is certain special cases, e.g.  If new Lyapunov exponents for $\a$  not proportional to those of $\ao$ (and hence new Lyapunov hyperplanes) appear, 
corresponding Lyapunov  foliations must collapse under the semi-conjugacy.

Entropy considerations like in Section~\ref{sbs:entropy} provide for that:  collapsing of certain directions 
leads to entropy deficit although   arguments become more involved.  

After that one can follow the  general line of arguments
in Sections~\ref{proofnil} and \ref{prooftorus} to obtain {\em an extension of Theorem~\ref{toralcase2} to the TNS non-resonance case.} A particular  case  where    double exponents are allowed due to existence of complex eigenvalues for $\ao$  is  announced in \cite{KKRH-ERA}.
Detailed proofs will appear in a subsequent paper.

Resonances  both for $\ao$ and  for $\a$ represent  an additional difficulty but basically   one should prove intertwining of geometric structures and hence smoothness of the semi-conjugacy along the coarse Lyapunov foliations. 
Thus one can  formulate  desired outcome as follows. 
\begin{conjecture} Let $\ao$ be  a totally non-symplectic  $\Z^k$ action  by automorphisms of an infranilmanifold and $\a$ be an action with homotopy data  $\ao$. Then every large invariant measure for $\a$ is absolutely continuos and has the same Lyapunov characteristic exponents as $\ao$.  
\end{conjecture}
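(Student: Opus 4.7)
The plan is to follow the architecture of the proof of Theorem~\ref{toralcase}, replacing each tool from the strongly simple case with its TNS analogue as sketched in Section~\ref{s: beyond}. Fix an ergodic large invariant measure $\mu$ for $\a$, and let $h\colon M\to M$ denote the semi-conjugacy. As a first reduction I would pass to the case where the linear part of $\ao$ is semi-simple over $\C$; the presence of Jordan blocks looks genuinely obstructive because then $\ao$ is no longer isometric along its Lyapunov hyperplanes, so the generalized synchronizing time change cannot produce uniformly controlled orbit geometry in the neutral directions. Under semi-simplicity, the action of $\ao$ along each of its Lyapunov hyperplanes is an isometry, and the coarse Lyapunov foliations of $\ao$ carry flat invariant affine (or, in the presence of double resonances, polynomial) structures coming from the non-trivial brackets in $\frak N$.

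The first main step is to transport the Lyapunov data of $\ao$ to $\a$. Copying the argument of Lemma~\ref{preservationofweylchambers} verbatim, using the new entropy inequality (Lemma~\ref{Newentropy}) and Proposition~\ref{preservationofentropy}, shows that every Lyapunov hyperplane of $\ao$ is a union of Lyapunov hyperplanes of $\a$ and that half-spaces match. The critical and hardest step is the converse: that no new Lyapunov hyperplane for $\a$ appears. Here the strongly simple argument fails because a new hyperplane need not force a drop in dimension of a coarse distribution under $h$; instead it can cause a collapse \emph{inside} a single coarse Lyapunov leaf. The plan is to run a refined entropy bookkeeping: along a generic singular element $\t$ lying on a hypothetical new hyperplane, apply the TNS extension of the synchronizing time change of Section~\ref{sbs:synchronizing} simultaneously to all Lyapunov exponents proportional to the one vanishing at $\t$ (this is possible whenever their number is at most $k-1$, which is automatic for a strongly simple $\ao$ and must be postulated or verified otherwise), build a $\pi$-partition adapted to the affine/polynomial invariant structure on the corresponding coarse leaf, and then combine Theorem~\ref{tech}, Theorem~\ref{graphargument} and Proposition~\ref{preservationofentropy} to derive an entropy deficit that contradicts largeness of $\mu$. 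This is where I expect the main technical fight, and where the ``more involved'' entropy arguments alluded to at the end of Section~\ref{s: beyond} have to be pinned down carefully, particularly in the double-resonance case in which the invariant curves on coarse leaves form parametric families rather than a single foliation by lines.

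Once the Lyapunov hyperplanes, Weyl chambers, and coarse Lyapunov distributions of $\a$ and $\ao$ are in bijection, the remainder is a TNS analogue of what is done in Sections~\ref{proofnil} and~\ref{prooftorus}. For each coarse Lyapunov foliation $\w^i$ of $\a$, Corollary~\ref{lyapunov foliations} adapts to show that $h$ sends leaves of $\w^i$ into cosets of the corresponding coarse Lyapunov leaf of $\ao$; combined with the $\pi$-partition/affine-group argument on invariant geometric structures along $\w^i$, conditional measures of $\mu$ on $\w^i$ are either atomic or absolutely continuous along a smooth submanifold, and the atomic case is ruled out by the entropy argument of Section~\ref{proofnil}. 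Assembling these slices via an appropriate strengthening of Theorem~\ref{absolutecontinuous} yields absolute continuity of $\mu$ on $M$. Finally, the equality of Lyapunov exponents of $\a$ and $\ao$ follows as in \cite[Lemma 4.4]{kk}: the restriction of the semi-conjugacy to almost every coarse Lyapunov leaf is a diffeomorphism onto its image, so it preserves asymptotic contraction and expansion rates.

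Beyond the new-hyperplane obstruction, the other points where I expect real friction are: (i) formulating the ``high entropy'' replacement of the strongly simple full entropy condition that survives in the presence of several proportional simple exponents inside one coarse leaf, which is used implicitly in the entropy bookkeeping above; (ii) the Hopf-type argument needed for uniqueness, where the index argument of Lemma~\ref{indexargument} must be adapted to higher-dimensional coarse leaves whose invariant geometric structure is polynomial rather than affine, so the holonomy maps are no longer conjugate to translations via $h$; and (iii) ensuring that the arguments on resonances are compatible with the generalized normal forms of \cite[Section 6]{kk-01}, which is presumably where the detailed proofs promised in Section~\ref{s: beyond} will have to do most of their work.
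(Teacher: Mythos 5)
The statement you were asked to prove is the paper's own final \emph{Conjecture}: the authors do not prove it, explicitly state that so far they can only handle ``certain special cases,'' and defer detailed proofs to a subsequent paper. What you have produced is therefore not being compared against a proof in the paper but against the authors' own roadmap in Section~\ref{s: beyond}, and your proposal reproduces that roadmap faithfully: reduction to the semisimple case to keep the ``freezing'' argument alive, matching of Lyapunov half-spaces via Lemma~\ref{preservationofweylchambers}/Proposition~\ref{preservationofentropy}, the simultaneous TNS synchronizing time change (with the constraint that the number of positively proportional exponents not exceed $k-1$), $\pi$-partitions with respect to the affine/polynomial structures on coarse Lyapunov leaves, entropy-deficit arguments to rule out new Lyapunov hyperplanes and atomic conditionals, absolute continuity via an analogue of Theorem~\ref{absolutecontinuous}, and exponent matching as in \cite[Lemma 4.4]{kk}. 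You also honestly flag the same bottlenecks the authors do. This is a reasonable research plan, but it is not a proof, and to your credit you do not pretend otherwise.

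Two calibration remarks. First, your step ``as a first reduction I would pass to the case where the linear part of $\ao$ is semi-simple over $\C$'' is not a reduction but an added hypothesis: the conjecture as stated does not assume semisimplicity, and the paper explicitly describes the Jordan-block case as ``less hopeful,'' so either the conjecture must be read as implicitly restricting to the semisimple case or a genuinely new idea is needed there --- this should be said explicitly. The same goes for the constraint ``number of proportional exponents $\le k-1$'': it is a condition enabling the simultaneous-synchronization trick, not something the conjecture assumes, so it is a real gap between your plan and the target statement. Second, several of the difficulties you list under (i)--(iii) concern the wrong target: the conjecture is the TNS analogue of Theorem~\ref{toralcase} (absolute continuity and exponent matching for \emph{every} large measure), not of Theorem~\ref{toralcase2} (uniqueness). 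The Hopf/index argument of Lemma~\ref{indexargument} and the ``high-entropy'' replacement of the full entropy condition are irrelevant here --- the first belongs to the uniqueness problem, the second to the abstract hyperbolic-measure setting of Theorem~\ref{general}, where largeness is not assumed. The genuine open core of this conjecture, which both you and the paper identify but neither closes, is the entropy bookkeeping that rules out new Lyapunov hyperplanes for $\a$ and establishes nonatomicity of conditionals on coarse Lyapunov leaves when those leaves are higher-dimensional and carry polynomial (resonant) invariant structures.
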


\bibliographystyle{alpha}

\end{document}